\newcommand{\Qq}{\mathbb{Q}} 
\newcommand{\Rr}{\mathbb{R}} 
\numberwithin{equation}{section}
\theoremstyle{plain}
\newtheorem{theorem}{Theorem}[section]
\newtheorem{lemma}[theorem]{Lemma}
\newtheorem{proposition}[theorem]{Proposition}
\newtheorem{conjecture}[theorem]{Conjecture}
\newtheorem{corollary}[theorem]{Corollary}}
\theoremstyle{remark}
\newtheorem{remark}[theorem]{Remark}
\newtheorem{definition}[theorem]{Definition}}
\title{On finite embedding problems with abelian kernels}
\author{Fran\c{c}ois Legrand}
\email{francois.legrand@unicaen.fr}
\address{Normandie Univ., UNICAEN, CNRS, Laboratoire de Math\'ematiques Nicolas Oresme, 14000 Caen, France}
\begin{document}

\maketitle

\vspace{-7.5mm}

\begin{abstract}
Given a Hilbertian field $k$ and a finite set $\mathcal{S}$ of Krull valuations of $k$, we show that every finite split embedding problem $G \rightarrow {\rm{Gal}}(L/k)$ over $k$ with abelian kernel has a solu\-tion ${\rm{Gal}}(F/k) \rightarrow G$ such that every $v \in \mathcal{S}$ is totally split in $F/L$. Two applications are then given. Firstly, we solve a non-constant variant of the Beckmann--Black problem for solvable groups: given a field $k$ and a non-trivial finite solvable group $G$, every Galois field extension $F/k$ of group $G$ is shown to occur as the specialization at some $t_0 \in k$ of some Galois field extension $E/k(T)$ of group $G$ with $E \not \subseteq \overline{k}(T)$. Secondly, we contribute to inverse Galois theory over division rings, by showing that, for every division ring $H$ and every automorphism $\sigma$ of $H$ of finite order, all finite semiabelian groups occur as Galois groups over the skew field of fractions $H(T, \sigma)$ of the twisted polynomial ring $H[T, \sigma]$.
\end{abstract}

\section{Introduction} \label{sec:intro}

\subsection{Main result} \label{ssec:intro_1}

The inverse Galois problem over a field $k$, a question going back to Hilbert and Noether, asks whe\-ther every finite group occurs as a Galois group over $k$, i.e., as the Galois group of a Galois field extension of $k$.  A stronger version asks for solutions to finite embedding problems over $k$. As in, e.g., \cite[\S16.4]{FJ08}, say that a {\it{finite embedding problem}} over $k$ is an epimorphism $\alpha : G \rightarrow {\rm{Gal}}(L/k)$, where $G$ is a finite group and $L/k$ a Galois field extension, and that $\alpha$ {\it{splits}} if there is an embedding $\alpha' : {\rm{Gal}}(L/k) \rightarrow G$ such that $\alpha \circ \alpha' = {\rm{id}}_{{\rm{Gal}}(L/k)}$. A {\it{solution}} to $\alpha$ is an isomorphism $\beta : {\rm{Gal}}(F/k) \rightarrow G$, where $F$ is a Galois field extension of $k$ containing $L$, such that $\alpha \circ \beta$ is the restriction map ${\rm{Gal}}(F/k) \rightarrow {\rm{Gal}}(L/k)$.

The following conjecture is a special case of the D\`ebes--Deschamps conjecture (see \cite{DD97b}):

\begin{conjecture} \label{conj:DD}
Each finite split embedding problem over any Hilbertian field has a solution.
\end{conjecture}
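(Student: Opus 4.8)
The plan is to reduce the conjecture, via Hilbert's irreducibility theorem, to a purely geometric statement over the rational function field $k(T)$, and then to attack that geometric statement by a dévissage on the kernel combined with patching. Fix a finite split embedding problem $\alpha : G \rightarrow {\rm{Gal}}(L/k)$ over a Hilbertian field $k$, together with a splitting that identifies $G = N \rtimes {\rm{Gal}}(L/k)$ with $N = \ker(\alpha)$. Since $k$ is Hilbertian, so is $k(T)$. I would first seek a \emph{regular} solution over $k(T)$: a Galois extension $F/k(T)$ with ${\rm{Gal}}(F/k(T)) \cong G$ compatibly with $\alpha$, containing the compositum $L \cdot k(T)$, and regular over its field of constants $L$ (that is, $F \cap \overline{k} = L$). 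Given such an $F$, Hilbert's irreducibility theorem for $k$ furnishes infinitely many $t_0 \in k$ whose specializations $F_{t_0}/k$ are Galois of group $G$ and solve $\alpha$ over $k$, the regularity of $F/L(T)$ guaranteeing that the group is preserved and that $L$ is contained in the specialization. This is the classical D\`ebes--Deschamps reduction, so the whole conjecture follows once the regular split embedding problem is solved over $k(T)$ for arbitrary $k$.

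Next I would reduce the regular problem to the case where $N$ is a \emph{minimal} normal subgroup of $G$. If $1 \neq M \trianglelefteq G$ with $M \subseteq N$, one first solves the smaller quotient problem $G/M \rightarrow {\rm{Gal}}(L/k)$ (whose kernel $N/M$ is smaller and which is again split); a regular solution to it produces an intermediate regular extension realizing $G/M$, over which the residual problem $G \rightarrow G/M$ has kernel $M$ and still splits. Iterating this dévissage, it suffices to treat a minimal normal kernel $N$, which is either elementary abelian or a direct product $S^r$ of copies of a nonabelian finite simple group $S$.

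The elementary abelian case is precisely the content of the abelian-kernel result of the present paper (with, moreover, the extra total-splitting refinement at finitely many valuations), obtained by constructing suitable $N$-covers of $\mathbb{P}^1$ carrying the required ${\rm{Gal}}(L/k)$-action. For the nonabelian case I would attempt patching over $k(T)$: over a ${\rm{Gal}}(L/k)$-symmetric configuration of branch points, build local Galois covers with group $S$ and glue them into an $S^r$-cover equipped with the correct ${\rm{Gal}}(L/k)$-action, so that the resulting extension has group $N \rtimes {\rm{Gal}}(L/k) = G$ and is regular over $L$. The hard part -- and the reason the statement remains a conjecture rather than a theorem -- is exactly this patching step over a field $k$ that need not be ample: when $k$ is large, Pop's theorem already supplies the regular solution, but for a general Hilbertian field such as $\Qq$ no such geometric construction is available, and already the degenerate case $L = k$ with ${\rm{Gal}}(L/k)$ trivial is the regular inverse Galois problem for the simple group $S$, which is open. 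Thus the genuine obstacle is the regular realization of nonabelian simple (minimal normal) kernels over $k(T)$ in the absence of any ampleness hypothesis on $k$.
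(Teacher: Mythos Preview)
The statement you were asked to prove is labelled \emph{Conjecture} in the paper, and the paper does not prove it; on the contrary, the text immediately following the conjecture explains that it is known only over ample or function fields and that no counter-example is known. There is therefore no proof in the paper to compare your proposal against.

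Your proposal is not a proof either, and you say so yourself: the dévissage reduces the problem to a minimal normal kernel, the abelian case is exactly Ikeda's theorem (the statement~($*$) recalled in the paper, which the paper then refines in Theorem~\ref{thm:intro_1}), and the nonabelian case collapses, already for $L=k$, to the regular inverse Galois problem for a nonabelian simple group over an arbitrary Hilbertian field, which is open. So what you have written is a correct explanation of \emph{why} the conjecture is open and where the obstruction lies, not a proof of it. That analysis is accurate and matches the state of the art, but it should not be presented as a proof attempt; the genuine gap is precisely the one you name, and nothing in your outline closes it.
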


\noindent
Recall that a field $k$ is {\it{Hilbertian}} if Hilbert's irreducibility theorem holds over $k$. For example, all global fields are Hilbertian. We refer to, e.g., \cite{FJ08} for more on Hilbertian fields.

The main interest of Conjecture \ref{conj:DD} is that it generalizes and unifies se\-ve\-ral conjectures in classical inverse Galois theory. For example, it allows to solve both the inverse Galois problem over every Hilbertian field (in particular, over $\Qq$) and the Shafarevich conjecture, the latter asserting that the absolute Galois group of the maximal cyclotomic extension of $\Qq$ is profinite free and being intensively studied (see, e.g., \cite{Pop96, HS05, Par09, Des15}). 

So far, Conjecture \ref{conj:DD} has been proved if the base field is a rational function field of one variable over an ample field\footnote{Recall that a field $k$ is {\it{ample}} (or {\it{large}}) if every smooth geometrically irreducible $k$-curve has zero or infinitely many $k$-rational points. Ample fields include algebraically closed fields, the complete valued fields $\Qq_p$, $\mathbb{R}$, $\kappa((Y))$, the field $\Qq^{\rm{tr}}$ of totally real numbers, etc. See, e.g., \cite{Jar11, BSF13, Pop14} for more details.} or if the base field is ample and Hilbertian (see \cite{Pop96, HJ98b}). On the other hand, no counter-example to Conjecture \ref{conj:DD} is known.

There is, however, a wea\-ker result, originating in a work of Ikeda (see \cite{Ike60}), which holds over every Hilbertian field (see, e.g., \cite[Proposition 16.4.5]{FJ08}): 

\vspace{1mm}

\noindent
($*$) {\it{Given a Hilbertian field $k$, every finite split embedding problem $\alpha : G \rightarrow {\rm{Gal}}(L/k)$ over $k$ with abelian kernel has a solution $\beta :{\rm{Gal}}(F/k) \rightarrow G$.}}

\vspace{1mm}

Our main aim is to show that there are solutions $\beta$ to $\alpha$ which are ``totally split" at any finitely many given Krull valuations of $k$. In the next result, which improves on Statement ($*$), we let $k_v^h$ denote the Henselization of a field $k$ at a Krull valuation $v$ of $k$ (see \S\ref{ssec:bas_3}).

\begin{theorem} \label{thm:intro_1}
Let $k$ be a Hilbertian field, $\mathcal{S}$ a finite set of Krull valuations of $k$, and $\alpha : G \rightarrow {\rm{Gal}}(L/k)$ a finite embedding problem over $k$. Assume $\alpha$ splits and ${\rm{ker}}(\alpha)$ is abelian. Then $\alpha$ has a solution ${\rm{Gal}}(F/k) \rightarrow G$ such that $F \subseteq L\cdot k_v^h$ for every $v \in \mathcal{S}$.
\end{theorem}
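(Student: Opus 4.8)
The plan is to realize $G$ as the Galois group of a suitable branched cover of the line over $k$ with constant field exactly $L$, arranged so that it has a completely split fibre above one $k$-rational point, and then to specialize via Hilbert's irreducibility theorem at a point which is moreover $v$-adically close to that special point for every $v \in \mathcal{S}$. First I would carry out the usual reduction to an induced kernel. Write $\Gamma = {\rm{Gal}}(L/k)$ and $A = {\rm{ker}}(\alpha)$, fix a splitting, so $G \cong A \rtimes \Gamma$, and let $n$ be the exponent of $A$. Then $A$ is a module over $(\mathbb{Z}/n\mathbb{Z})[\Gamma]$, hence a quotient $M \to A$ of a free such module $M = ((\mathbb{Z}/n\mathbb{Z})[\Gamma])^{r}$, say with kernel $B$; this yields an epimorphism $\widetilde{G} = M \rtimes \Gamma \to A \rtimes \Gamma = G$ with kernel $B$, compatible with the maps to $\Gamma$. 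If $\widetilde{F}/k$ solves the split embedding problem $\widetilde{\alpha} : \widetilde{G} \to \Gamma$ with $\widetilde{F} \subseteq L \cdot k_v^h$ for all $v \in \mathcal{S}$, then $F = \widetilde{F}^{\,B}$ solves $\alpha$ and still $F \subseteq \widetilde{F} \subseteq L \cdot k_v^h$. So I may assume $A = M$ is a free $(\mathbb{Z}/n\mathbb{Z})[\Gamma]$-module; note that Shapiro's lemma gives $H^{i}(\Gamma, (\mathbb{Z}/n\mathbb{Z})[\Gamma]) = 0$ for $i \geq 1$, so that any extension of $\Gamma$ by $M$ inducing the given action is automatically isomorphic to $\widetilde{G}$, which will spare me cocycle computations below.

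The core step is the construction of the cover. Fix a geometrically connected Galois cover $V_0 \to \mathbb{P}^1_k$ of group $\mathbb{Z}/n\mathbb{Z}$ defined over $k$ (such covers exist over every field). Choose a $k$-rational point $t_\infty$ of $\mathbb{P}^1$ which is not a branch point, a sufficiently general primitive element $\theta$ of $L/k$, and, after moving $V_0$ suitably and translating by $\theta$, geometrically connected degree-$n$ cyclic covers $M^{(1)}, \dots, M^{(r)}$ of $L(T)$, each regular over $L$, whose branch loci together with all their $\Gamma$-conjugates are pairwise disjoint (possible since $k$ is infinite, being Hilbertian). For each $j$ I would then twist $M^{(j)}$ by the class in $H^{1}(L, \mathbb{Z}/n\mathbb{Z})$ opposite to that of its fibre above $t_\infty$, so that the new $M^{(j)}$ has completely split fibre above $t_\infty$ over $L$; since $t_\infty \in k$ is fixed by $\Gamma$, the same then holds for every conjugate $\sigma(M^{(j)})$, $\sigma \in \Gamma$. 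Let $N$ be the compositum inside $\overline{L}(T)$ of all the $\sigma(M^{(j)})$. Disjointness of the branch loci forces these fields to be linearly disjoint over $L(T)$, so $N/L(T)$ is regular with ${\rm{Gal}}(N/L(T)) \cong M$, on which $\Gamma$ acts by permuting coordinates; hence $N/k(T)$ is Galois with constant field $L$, and by the vanishing of $H^{2}(\Gamma, M)$ one gets ${\rm{Gal}}(N/k(T)) \cong \widetilde{G}$ with restriction to $\Gamma$ equal to $\widetilde{\alpha}$. Finally, the fibre of $N$ above $t_\infty$ is a tensor product over $L$ of completely split fibres, hence completely split over $L$, a fortiori over $L \cdot k_v^h$ for every $v \in \mathcal{S}$.

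It remains to specialize. By the standard behaviour of the decomposition groups of a Galois cover under specialization, together with Krasner's lemma, for each $v \in \mathcal{S}$ there is a nonempty $v$-adic neighbourhood $U_v$ of $t_\infty$ in $k$ such that $N_{t_0} \subseteq L \cdot k_v^h$ for every $t_0 \in U_v$. On the other hand, by Hilbert's irreducibility theorem there is a Hilbert subset $\mathcal{H}$ of $k$ such that for $t_0 \in \mathcal{H}$ not a branch point, $N_{t_0}/k$ is Galois, contains $L$, and the natural isomorphism ${\rm{Gal}}(N_{t_0}/k) \cong \widetilde{G}$ identifies the restriction map to ${\rm{Gal}}(L/k)$ with $\widetilde{\alpha}$; that is, $N_{t_0}/k$ solves $\widetilde{\alpha}$. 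Using that Hilbert subsets of a Hilbertian field meet every nonempty open subset for the topology defined by finitely many Krull valuations, I may pick $t_0 \in \mathcal{H} \cap \bigcap_{v \in \mathcal{S}} U_v$. Then $\widetilde{F} = N_{t_0}$ solves $\widetilde{\alpha}$ and satisfies $\widetilde{F} \subseteq L \cdot k_v^h$ for all $v \in \mathcal{S}$, and $F = \widetilde{F}^{\,B}$ is the desired solution to $\alpha$.

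The main obstacle is the construction in the second paragraph: producing a geometric realization of $\widetilde{G} = G$ over $k(T)$ with constant field exactly $L$ and with a completely split fibre above a single $k$-rational point, the two requirements pulling in opposite directions. The ``$\Gamma$-conjugate plus twist'' device is what makes them compatible — disjoint branch loci pin down the module structure and the linear disjointness, Shapiro's lemma promotes the resulting extension of $\Gamma$ to the prescribed semidirect product with no cocycle computation, and twisting each cyclic building block by a constant $H^{1}$-class trivializes its fibre above the $\Gamma$-fixed point $t_\infty$ simultaneously for all of its conjugates. The one external input one must have in hand is the density of Hilbert subsets with respect to finitely many Krull valuations, which is exactly what lets the Hilbert set and the local neighbourhoods be met at once.
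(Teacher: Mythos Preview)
Your approach is correct and lands on the same endgame as the paper --- a geometric solution with a completely split fibre over one $k$-rational point, followed by a specialization that is simultaneously Hilbertian and $v$-adically close to that point for every $v\in\mathcal S$ (the paper invokes the same density input, \cite[Proposition~19.6]{Jar91}) --- but the construction of that geometric solution is genuinely different.

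The paper follows the classical Ikeda route almost verbatim: it realizes $A$ over $k(T)$ with split fibre at $0$ via the twisting lemma, then takes $n=|\Gamma|$ copies in \emph{independent variables} $U_1,\dots,U_n$, conjugates, and performs the linear change of coordinates $(U_i)\mapsto(T_j)$ so that the whole package becomes Galois over $k(T_1,\dots,T_n)$ with group $A^{\Gamma}\rtimes\Gamma$. The price is that one must control ramification and specialization in several variables (the paper's Lemmas~3.1--3.6), and then thread an auxiliary variable $T$ through the picture (Lemma~3.8 and its addendum) to descend to a one-variable situation where Krasner applies. Your route stays in one variable throughout: you separate the $\Gamma$-translates not by giving them independent variables but by giving them \emph{disjoint branch loci} (via translation by a primitive element of $L$), and you force the group to be the semidirect product by the cohomological shortcut $H^2(\Gamma,(\mathbb{Z}/n\mathbb{Z})[\Gamma]^r)=0$ rather than by an explicit wreath-product computation. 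This buys you a cleaner geometric picture and spares all the higher-dimensional ramification bookkeeping; what the paper's approach buys is that it literally upgrades the textbook proof of Ikeda's theorem (\cite[\S16.4]{FJ08}), so nothing new has to be verified about the group structure or linear disjointness. The two proofs are thus different implementations of the same strategy: inflate the kernel to an induced module, realize it with a prescribed split fibre, and specialize densely for the valuations.
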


We point out that there are other results on solving finite embedding problems over Hilbertian fields with local conditions (see, e.g., \cite{KM04, JR18, JR19, JR20, FL20, CF21}), possibly with a wider range of local conditions and/or kernels. However, those results hold only over some specific classes of Hilbertian fields and consider more restrictive classes of valuations.

The proof of Theorem \ref{thm:intro_1} (see \S\ref{sec:proof_1}) follows the main lines of that of ($*$). Recall that the latter mainly consists in viewing $G$ as a quotient of the wreath product ${\rm{ker}}(\alpha) \wr {\rm{Gal}}(L/k)$, in realizing the latter as the Galois group of a Galois field extension of $k(T_1, \dots, T_n)$ containing $L$, where $n = [L:k]$, and in using the Hilbertianity of $k$ to specialize $T_1, \dots, T_n$ suitably. To get our extra local conclusion, we use various tools, such as ramification in higher dimension, several lemmas of general interest on specializations of function field extensions in several variables that we establish, and the fact that, given a field $k$, every finite abelian group is the Galois group of a Galois extension $E$ of the rational function field $k(T)$ with $E \subseteq k((T))$.

\subsection{Applications} \label{ssec:intro_2}

We then give two applications of Theorem \ref{thm:intro_1}, to the Beckmann--Black problem (see \S\ref{sssec:intro_2.1}) and inverse Galois theory over division rings (see \S\ref{sssec:intro_2.2}).

\subsubsection{On the Beckmann--Black problem} \label{sssec:intro_2.1}

The geometric approach to realize a finite group $G$ as a Galois group over a number field $k$ consists in realizing $G$ as a regular Galois group over $k$, i.e., as the Galois group of a Galois field extension $E/k(T)$ with $E \cap \overline{k}=k$. For such an extension $E/k(T)$, Hilbert's irreducibility theorem provides infinitely many $t_0 \in k$ such that the {\it{specialization}} $E_{t_0}/k$ of $E/k(T)$ at $t_0$ has Galois group $G$ (see \S\ref{ssec:bas_1} for basic terminology). Several finite groups, including non-abelian simple ones, have been realized by this method. See \cite{Ser92, Vol96, FJ08, MM18} for more details and references within.

The Beckmann--Black problem asks whether this approach is optimal: given a field $k$ and a finite group $G$, is every Galois field extension $F/k$ of group $G$ the specialization at some $t_0 \in k$ of some Galois extension $E/k(T)$ of group $G$ with $E \cap \overline{k} = k$? If $k$ is ample, the answer is {\it{Yes}} for every finite group $G$ (see \cite{CT00, MB01}). If $k$ is a number field, the answer is known to be {\it{Yes}} for only a few groups $G$, including abelian groups (see \cite{Bec94, Deb99a}), symmetric groups, and alternating groups (see \cite{Mes90, KM01}), and no counter-example is known. See also th\'eor\`eme 2.2 of the survey paper \cite{Deb01b}.

Given a number field $k$ and a finite solvable group $G$, it is widely unknown whether $G$ is a regular Galois group over $k$. As $G$ is a Galois group over $k$ by Shafarevich's theorem (see \cite[Theorem 9.6.1]{NSW08}), a positive answer to the Beckmann--Black problem for $G$ over $k$ is even harder to reach. The next theorem (see Corollary \ref{thm0}), which was not available in the literature, establishes a necessary condition for such an affirmative answer:

\begin{theorem} \label{thm:intro_3}
Let $G$ be a non-trivial finite solvable group, $k$ an arbitrary field, and $F/k$ a Galois field extension of group $G$. There exist $t_0 \in k$ and a Galois field extension $E/k(T)$ of group $G$ with $E \not \subseteq \overline{k}(T)$ such that the specialization $E_{t_0}/k$ of $E/k(T)$ at $t_0$ equals $F/k$.
\end{theorem}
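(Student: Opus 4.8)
The plan is to apply Theorem~\ref{thm:intro_1} over the Hilbertian field $k(T)$, taking for base extension the \emph{constant} extension $F(T)/k(T)$ (which is Galois of group $G$), and to cut out from the solution of a well-chosen split embedding problem a non-constant subextension of group $G$ whose specialization at a prescribed $k$-rational point is $F/k$. Fix $t_0\in k$ (say $t_0=0$) and let $v_0$ be the $(T-t_0)$-adic valuation of $k(T)$; its Henselization $k(T)^h_{v_0}$ has residue field $k$ and contains $k$, and since $F/k$ is finite separable, $F(T)\otimes_{k(T)}k(T)^h_{v_0}=F\cdot k(T)^h_{v_0}$ is a \emph{field} of degree $|G|$ over $k(T)^h_{v_0}$ --- i.e. $v_0$ is unramified and non-split in $F(T)/k(T)$, with specialization $F(T)_{t_0}=F/k$. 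The first observation is that any Galois extension $E/k(T)$ of group $G$ with $E_{t_0}=F$ and $E\neq F(T)$ is automatically non-constant: if $E\subseteq\overline{k}(T)$, then $E=\kappa(T)$ where $\kappa$ is the algebraic closure of $k$ in $E$, $\mathrm{Gal}(\kappa/k)\cong G$, and $E_{t_0}=\kappa$; so $\kappa=F$ and $E=F(T)$, a contradiction. Hence it suffices to produce such an $E$ with $E\neq F(T)$.

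To this end, let $V$ be a minimal normal subgroup of $G$: since $G$ is solvable, $V$ is an elementary abelian $\ell$-group for some prime $\ell$ and an irreducible $\mathbb{F}_\ell[G]$-module for the conjugation action. Put $A:=\mathbb{F}_\ell[G]$ (regular representation), $\Gamma:=A\rtimes G$ (the wreath product $(\mathbb{Z}/\ell)\wr G$), and let $\alpha:\Gamma\to G$ be the projection, a finite split embedding problem with abelian kernel $A$. Apply Theorem~\ref{thm:intro_1} with base field $k(T)$, with $L:=F(T)$, with $\alpha$ as above, and with $\mathcal S:=\{v_0\}$. This yields a solution $\beta:\mathrm{Gal}(\mathcal M/k(T))\xrightarrow{\ \sim\ }\Gamma$ with $F(T)\subseteq\mathcal M$ and $\mathcal M\subseteq F(T)\cdot k(T)^h_{v_0}$. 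Combining the two inclusions gives $\mathcal M\cdot k(T)^h_{v_0}=F(T)\cdot k(T)^h_{v_0}$, so $v_0$ is unramified in $\mathcal M/k(T)$ and its decomposition group $D\leq\Gamma$ maps isomorphically onto $G$ under $\alpha$; thus $D$ is a complement to $A=\ker\alpha$ in $\Gamma$. As $A=\mathbb{F}_\ell[G]$ is an induced module, $H^1(G,A)=0$ by Shapiro's lemma, so $D$ is conjugate, \emph{by an element of $A$}, to the standard complement $1\rtimes G$.

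It now suffices to exhibit a normal subgroup $N\trianglelefteq\Gamma$ with $N\neq A$ and $N\cap(1\rtimes G)=1$: then, $D$ being an $A$-conjugate of $1\rtimes G$ and $N$ being normal, also $N\cap D=1$, whence $|N|=|A|$, $DN=\Gamma$ and $\Gamma/N\cong D\cong G$. Setting $E:=\mathcal M^{\beta^{-1}(N)}$ gives a Galois extension of $k(T)$ of group $G$; since $DN=\Gamma$ the valuation $v_0$ is unramified and non-split in $E$ with residue degree $|G|$, and since $E\subseteq\mathcal M\subseteq F(T)k(T)^h_{v_0}$ we get $E\cdot k(T)^h_{v_0}=F(T)\cdot k(T)^h_{v_0}$, so $E_{t_0}=F/k$; finally $E\neq F(T)=\mathcal M^{\beta^{-1}(A)}$ because $N\neq A$, so $E$ is non-constant by the first paragraph, and we are done. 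To construct $N$: as $V$ is monogenic over $\mathbb{F}_\ell[G]$, there is a surjection of $\mathbb{F}_\ell[G]$-modules $q:A\twoheadrightarrow V$ (with $V$ carrying the conjugation action); $B:=\ker q$ is a $G$-submodule, hence normal in $\Gamma$, and $\Gamma/B\cong V\rtimes G$. The multiplication map $\mu:V\rtimes G\to G$, $(v,g)\mapsto vg$, is a surjective homomorphism whose kernel $\{(v,v^{-1}):v\in V\}$ is normal in $V\rtimes G$, meets $1\rtimes G$ trivially, and differs from $V\rtimes 1$ since $V\neq 1$. Taking for $N$ the preimage of $\ker\mu$ in $\Gamma$ gives $N\trianglelefteq\Gamma$, $N\neq A$, $N\cap(1\rtimes G)=1$, as wanted.

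The main obstacle is precisely this last, group-theoretic, point: the normal subgroup $N$ has to be produced so as to work against the a priori uncontrolled decomposition group $D$ of the solution provided by Theorem~\ref{thm:intro_1}. This is what forces the embedding problem to be built from the \emph{full} regular representation $\mathbb{F}_\ell[G]$ (so that $H^1(G,\mathbb{F}_\ell[G])=0$ makes all complements of $A$ conjugate under $A$), and it is where the solvability of $G$ is genuinely used --- to provide a non-trivial abelian normal subgroup $V$ and hence the "twisted" normal complement coming from the map $V\rtimes G\to G$. (Alternatively one could realize $G$ non-constantly over $k(T)$ split at $t_0$ and then twist by the $G$-torsor $F/k$, but producing such a realization runs into the same difficulty.) Everything else --- the Hilbertianity of $k(T)$, the behaviour of $v_0$ in constant extensions, and the dictionary between ``$E\subseteq L\cdot k(T)^h_v$'' and the splitting type of the specialization --- is routine once Theorem~\ref{thm:intro_1} is available.
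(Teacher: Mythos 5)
Your argument is correct, but it takes a genuinely different route from the paper's. The paper does not prove Theorem \ref{thm:intro_3} directly: it first establishes the much stronger Theorem \ref{thm:intro_2} (every solution of an embedding problem with abelian kernel is the specialization of a geometric solution with prescribed constant field $E\cap\overline{k}=L$, and even $\beta_{t_0}=\gamma$ at the level of solutions), via Proposition \ref{prop:split} (a fiber-product ``twisting by the given solution $F$'' inside $\mathrm{Gal}(EF/k(T))$, made possible by Theorem \ref{thm:intro_1} applied over $k$ with the $T$-adic condition $E\subseteq L((T))$) followed by a weak$\rightarrow$split reduction and a second Hilbertianity/Krasner step over $k(T_1)$; Theorem \ref{thm:intro_3} is then Corollary \ref{thm0}, obtained by feeding the embedding problem $G\rightarrow\mathrm{Gal}(F^{\theta(H')}/k)$ with abelian kernel $H'$ into Theorem \ref{thm:intro_2}. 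You instead apply Theorem \ref{thm:intro_1} over the Hilbertian field $k(T)$, with $L=F(T)$ and the $(T-t_0)$-adic place as the local condition, and you replace the function-field twisting entirely by group theory: the regular module $A=\mathbb{F}_\ell[G]$ forces, via $H^1(G,A)=0$, the (a priori uncontrolled) decomposition group at $v_0$ to be an $A$-conjugate of the standard complement, and your ``twisted diagonal'' normal subgroup $N$ (pulled back from $\ker(V\rtimes G\rightarrow G)$ for a minimal normal $V$) meets every such complement trivially, so that $E=\mathcal{M}^{\beta^{-1}(N)}$ is a degree-$|G|$ extension, inert and unramified at $v_0$ with residue field $F$, and distinct from $F(T)$, hence non-constant by your (correct) preliminary remark on constant extensions. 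What each approach buys: yours is shorter and self-contained given Theorem \ref{thm:intro_1}, avoiding all of Section \ref{sec:proof_2}; the paper's yields far more, namely the full Beckmann--Black statement for abelian-kernel embedding problems, with control of $E\cap\overline{k}$ (which your construction does not determine) and of the specialized isomorphism, and Corollary \ref{thm0}'s slightly weaker hypothesis (a non-trivial solvable normal subgroup suffices) --- though your argument would also accommodate that, taking $V$ minimal normal inside an abelian normal subgroup. One small slip in wording: from $N\trianglelefteq\Gamma$, $N\neq A$ and $N\cap D=1$ alone one cannot conclude $|N|=|A|$ (take $N=1$); but this is harmless, since your explicit $N$, being the preimage of a subgroup of order $|V|$ under a surjection with kernel of order $|A|/|V|$, visibly has order $|A|$, which is what the counting argument $DN=\Gamma$, $\Gamma/N\cong G$ actually needs.
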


In fact, we have the following stronger result. Given a finite embedding problem $\alpha : G \rightarrow {\rm{Gal}}(L/k)$ over a field $k$, denote the restriction map ${\rm{Gal}}(L(T)/k(T)) \rightarrow {\rm{Gal}}(L/k)$ by ${\rm{res}}$, and say that a solution to the finite embedding problem ${\rm{res}}^{-1} \circ \alpha$ over $k(T)$ is a {\it{geometric solution}} to $\alpha$. We derive from Theorem \ref{thm:intro_1} that, if ${\rm{ker}}(\alpha)$ is abelian, then every solution ${\rm{Gal}}(F/k) \rightarrow G$ to $\alpha$ occurs as the specialization at some $t_0 \in k$ of some geometric solution ${\rm{Gal}}(E/k(T)) \rightarrow G$ to $\alpha$ with $E \cap \overline{k}=L$ (see \S\ref{ssec:bas_2} for more details on specializations at the level of finite embedding problems and Theorem \ref{thm:intro_2} for our precise result). 

That is, our result solves the Beckmann--Black problem for finite embedding problems with abelian kernels over arbitrary fields and, in particular, widely extends both D\`ebes' theorem (see \cite{Deb99a}) solving the Beckmann--Black problem for abelian groups over arbitrary fields (the case $L=k$ of our result) and the affirmative answer to the Beckmann--Black problem for {\it{Brauer embedding problems}} (see, e.g., \cite[Chapter IV, \S7.1]{MM18} for more details). To our knowledge, only one other result about the Beckmann--Black problem for finite embedding problems was available in the literature. Namely, the answer is affirmative if the base field is PAC\footnote{Recall that a field $k$ is {\it{Pseudo Algebraically Closed}} (PAC) if every non-empty geometrically irreducible $k$-variety has a Zariski-dense set of $k$-rational points. See, e.g., \cite{FJ08} for more on PAC fields.}, as a combination of a result of Bary-Soroker (see \cite[Corollary 3.4]{BS09b}) and the result asserting that every finite embedding problem $G \rightarrow {\rm{Gal}}(L/k)$ over a PAC field $k$ has a geometric solution ${\rm{Gal}}(E/k(T)) \rightarrow G$ with $E \cap \overline{k}=L$ (see \cite{Pop96}).

\subsubsection{Inverse Galois theory over division rings} \label{sssec:intro_2.2}

According to Artin, if $H \subseteq L$ are division rings\footnote{A {\it{division ring}} is a non-zero ring $H$ such that every non-zero element of $H$ is invertible in $H$. Of course, commutative division rings are nothing but {\it{fields}}.}, $L/H$ is {\it{Galois}} if every element of $L$ which is fixed under all automorphisms of $L$ fixing $H$ pointwise is in $H$ (see \cite{Jac64, Coh95} for more on Galois theory of division rings). With the latter definition, inverse Galois theory can be studied over arbitrary division rings. Yet, it is striking that
not much is known in the non-commutative case (see \cite{DL20, ALP20, Beh21, BDL20f, Des21a, FL20} for recent works on that topic).

Given an automorphism $\sigma$ of a division ring $H$, let $H[T, \sigma]$ be the ring of polynomials $a_0 + a_1 T + \cdots + a_n T^n$ with $n \geq 0$ and $a_0, \dots, a_n \in H$, whose addition is defined componentwise and multiplication fulfills $Ta = \sigma(a) T$ for $a \in H$. By $H(T, \sigma)$, we mean the unique division ring containing $H[T, \sigma]$ and each element of which can be written as $ab^{-1}$ with $a \in H[T, \sigma]$, $b \in H[T, \sigma] \setminus \{0\}$.  If $H$ is a field and $\sigma= {\rm{id}}_H$, we retrieve the usual commutative polynomial ring $H[T]$ and the rational function field $H(T)$, respectively. See \S\ref{sssec:tiganoco_1.1} for more details.

In \cite[th\'eor\`eme A]{Beh21}, Behajaina shows that, for $\sigma$ of finite order, every finite group is a Galois group over $H(T, \sigma)$, if the fixed field of $\sigma$ in the center $h$ of $H$ contains an ample field, thus yielding a non-commutative analogue of Pop's result solving the regular inverse Galois problem over ample fields. Under weaker assumptions on $h$ (e.g., $h$ of characteristic zero), some ``usual" finite groups in inverse Galois theory, like abelian, symmetric, and alternating groups, are shown to occur as Galois groups over $H(T, \sigma)$ (see \cite[remarque du \S2.1]{Beh21}).

We combine Theorem \ref{thm:intro_1} and Behajaina's method to realize another family of usual finite groups as Galois groups over $H(T, \sigma)$, without making any assumption on the center (see \S\ref{ssec:tiganoco_2} for the proof of the next theorem). Recall that a finite group is {\it{semiabelian}} if it belongs to the smallest non-empty class $\mathcal{C}$ of finite groups closed under quotients and such that, if $G \in \mathcal{C}$ and $A$ is finite abelian, then every semidirect product $A \rtimes G$ is in $\mathcal{C}$.

\begin{theorem} \label{thm:intro_4}
Let $H$ be a division ring and $\sigma$ an automorphism of $H$ of finite order. Every finite semiabelian group is a Galois group over $H(T, \sigma)$.
\end{theorem}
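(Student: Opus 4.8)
The plan is to reduce, via Behajaina's transfer technique (see \cite{Beh21}), to realizing $G$ over a well-chosen Hilbertian subfield of the center of $K := H(T,\sigma)$, and to apply Theorem~\ref{thm:intro_1} there with a single, carefully chosen local condition. Write $n$ for the order of $\sigma$, let $h$ be the center of $H$, and let $h_{0}$ be the fixed field of $\sigma$ in $h$. Since $T^{n}a=\sigma^{n}(a)T^{n}=aT^{n}$ for all $a\in H$ and $Tb=\sigma(b)T=bT$ for all $b\in h_{0}$, the subfield $k:=h_{0}(T^{n})$ lies in the center of $K$; as $T^{n}$ is transcendental over $h_{0}$, it is a rational function field in one variable over $h_{0}$, hence Hilbertian. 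The extra structure I would exploit is the $T$-adic valuation $w$ of $K$ (well defined because $H$ is a division ring): its restriction $v_{0}:=w|_{k}$ is a Krull valuation of $k$, the completion of $K$ at $w$ is the skew Laurent series division ring $\widehat{K}:=H((T,\sigma))$, and the completion $h_{0}((T^{n}))$ of $k$ at $v_{0}$ lies in the center of $\widehat{K}$ by the same computation. In particular the Henselization $k_{v_{0}}^{h}$ embeds into $h_{0}((T^{n}))\subseteq Z(\widehat{K})$.

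First I would realize $G$ over $k$ by an extension sitting inside $k_{v_{0}}^{h}$. By the definition of semiabelian groups recalled above, a short induction shows that there is a finite group $\widehat{G}$, equipped with a chain of split epimorphisms with abelian kernels $\widehat{G}=\Gamma_{m}\to\cdots\to\Gamma_{0}=\{1\}$, together with an epimorphism $\widehat{G}\to G$. Starting from $M_{0}=k$ and applying Theorem~\ref{thm:intro_1} successively with the finite set of valuations taken to be $\mathcal{S}=\{v_{0}\}$ throughout: at step $i$ the split embedding problem $\Gamma_{i}\to\Gamma_{i-1}=\mathrm{Gal}(M_{i-1}/k)$ has abelian kernel, hence has a solution $\mathrm{Gal}(M_{i}/k)\to\Gamma_{i}$ with $M_{i}\subseteq M_{i-1}\cdot k_{v_{0}}^{h}$; since $M_{i-1}\subseteq k_{v_{0}}^{h}$ by induction, this gives $M_{i}\subseteq k_{v_{0}}^{h}$. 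Passing to the subextension of $M_{m}$ fixed by the kernel of $\mathrm{Gal}(M_{m}/k)\cong\widehat{G}\to G$ yields a finite Galois extension $M/k$ with $\mathrm{Gal}(M/k)\cong G$ and $M\subseteq k_{v_{0}}^{h}\subseteq Z(\widehat{K})$.

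It then remains to transfer $M/k$ up to $K$. Because $M$ lies in the center of $\widehat{K}$, it commutes with $K$ there, and the natural map $M\otimes_{k}K\to\widehat{K}$ is injective: if $\theta$ is a primitive element of $M/k$ with minimal polynomial $p\in k[X]$, then, $\theta$ being central in $\widehat{K}$, a conjugation argument forces the minimal polynomial of $\theta$ over $K$ to have central coefficients, hence to divide $p$ in $k[X]$, hence to equal $p$; so $M\otimes_{k}K\cong K[\theta]$ inside $\widehat{K}$. Thus $L:=M\cdot K$ is a domain (a subring of the division ring $\widehat{K}$) which is finite-dimensional as a left $K$-vector space, hence is itself a division ring. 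Finally, each element of $\mathrm{Gal}(M/k)$ extends to a ring automorphism of $L=M\otimes_{k}K$ fixing $K$ pointwise, and $L^{\mathrm{Gal}(M/k)}=M^{\mathrm{Gal}(M/k)}\otimes_{k}K=K$; by Artin's theorem on Galois extensions of division rings, $[L:K]=|\mathrm{Gal}(L/K)|$, and a comparison of orders gives $\mathrm{Gal}(L/K)\cong\mathrm{Gal}(M/k)\cong G$, so $G$ is a Galois group over $H(T,\sigma)$.

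The hard part will be the bridge between the commutative and the non-commutative worlds: one must exhibit a Hilbertian subfield $k$ of $Z(H(T,\sigma))$ together with a valuation $v_{0}$ of $k$ whose Henselization embeds into the center of a complete discretely valued division ring containing $H(T,\sigma)$, so that the local conclusion of Theorem~\ref{thm:intro_1} --- total splitting of $M/k$ at $v_{0}$ --- is precisely what keeps $M\otimes_{k}H(T,\sigma)$ a division ring. The other point requiring care is the group-theoretic input expressing semiabelian groups through iterated split embedding problems with abelian kernels; this is exactly where the hypothesis ``semiabelian'' is used and cannot be relaxed to ``solvable'', since Theorem~\ref{thm:intro_1} only handles abelian kernels.
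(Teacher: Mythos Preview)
Your approach is the paper's: reduce to constructing a Galois extension $M/k$ with $k = h^{\langle\widetilde\sigma\rangle}(T^n)$ and $M \subseteq h^{\langle\widetilde\sigma\rangle}((T^n))$, build $M$ inductively from Theorem~\ref{thm:intro_1} using the semiabelian structure, and then transfer to $H(T,\sigma)$. The paper invokes Lemma~\ref{lemma:geo_2} (that is, \cite[\S2.1]{Beh21}) as a black box for the transfer, whereas you sketch it directly via an embedding $M \otimes_k K \hookrightarrow H((T,\sigma))$.

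A word of caution on that sketch: your conjugation argument only shows that the minimal monic relation for $\theta$ over $K$ has coefficients in $Z(K)$, and in general $k \subsetneq Z(K)$ (for instance when some nontrivial power of $\sigma$ is an inner automorphism of $H$), so the step ``hence to divide $p$ in $k[X]$'' is not immediate. What one actually needs is that $M$ and $K$ are linearly disjoint over $k$ inside $\widehat{K}$; this is true (one reduces, via the grading of $H[T,\sigma]$ by residues modulo $n$ and an $h$-basis of $H$, to the linear disjointness of $M\subseteq h_0((T^n))$ and $h(T^n)$ over $h_0(T^n)$, which follows since $h/h_0$ is Galois), but it is more than the one line you give it. Likewise, your appeal to Artin's theorem should record that no nontrivial element of $G$ acts on $L$ by an inner automorphism, which holds here because $M$ lies in the center of $\widehat{K}$. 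Since you already cite \cite{Beh21} for the transfer technique, the cleanest fix is to defer to it as the paper does.
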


We also contribute to solving finite embedding problems over division rings (a topic introduced in \cite{BDL20f}), by showing that every finite split embedding problem with abelian kernel over any given division ring $H$ of finite dimension over its center acquires a solution over the division ring $H(T)=H(T, {\rm{id}}_H)$. See \S\ref{sssec:tiganoco_1.2} for more on finite embedding problems over division rings and Theorem \ref{thm:main_2} for our precise result.

\vspace{2mm}

{\bf{Acknowledgements.}} Part of this work was done during a research visit at the Universit\'e Caen Normandie in March 2020. We would like to thank the Laboratoire de Math\'ematiques Nicolas Oresme for kind hospitality and financial support. Another part of this work fits into Project TIGANOCO, which is funded by the European Union within the framework of the Operational Programme ERDF/ESF 2014-2020. We also wish to thank Angelot Behajaina and Arno Fehm for helpful discussions about this work.

\section{Preliminaries} \label{sec:basics}

In this section, we collect the material on specializations of function field extensions, finite embedding problems over fields, and Krull valuations which will be used in the sequel.

\subsection{Specializations of function field extensions} \label{ssec:bas_1}

Let $k$ be a field, ${\bf{T}}=(T_1, \dots, T_n)$ a tuple of indeterminates, $E$ a Galois field extension of the rational function field $k({\bf{T}})$ of degree $d$, and ${B}$ the integral closure of $k[{\bf{T}}]$ in ${E}$. For ${\bf{t}}=(t_1, \dots, t_n) \in k^n$, the residue field of ${B}$ at a maximal ideal $\mathfrak{P}$ lying over $\langle T_1-t_1, \dots, T_n-t_n \rangle$ is denoted by ${E}_{\bf{t}}$, and the extension ${E}_{\bf{t}}/k$ is called the {\it{specialization}} of ${E}/k({\bf{T}})$ at ${\bf{t}}$. Since ${E}/k({\bf{T}})$ is Galois, the field ${E}_{\bf{t}}$ does not depend on the choice of the maximal ideal $\mathfrak{P}$ lying over $\langle T_1-t_1, \dots, T_n-t_n \rangle$. Moreover, the field extension ${E}_{\bf{t}}/k$ is finite and normal, and its automorphism group is isomorphic to $D_\mathfrak{P}/I_\mathfrak{P}$, where $D_\mathfrak{P}$ (resp., $I_\mathfrak{P}$) denotes the decomposition group (resp., the inertia group) of $E/k({\bf{T}})$ at $\mathfrak{P}$. More precisely,
\begin{equation} \label{eqvarphi}
\varphi_{{\bf{t}}}: \left \{ \begin{array} {ccc}
D_\mathfrak{P} & \rightarrow & {\rm{Aut}}({E}_{{\bf{t}}}/k) \\
\sigma & \mapsto & \overline{\sigma} \\
\end{array} \right. ,
\end{equation}
where $\overline{\sigma}(x \, {\rm{mod}} \, \mathfrak{P}) = \sigma(x) \, {\rm{mod}} \, \mathfrak{P}$ for every $x \in {B}$, is an epimorphism whose kernel is $I_\mathfrak{P}$. If ${E}_{\bf{t}}/k$ is Galois of degree $d$, then $\varphi_{\bf{t}}$ is an isomorphism ${\rm{Gal}}({E}/k({\bf{T}})) \rightarrow {\rm{Gal}}({E}_{{\bf{t}}}/k)$. For ${\bf{t}}$ outside a Zariski-closed proper subset (depending only on ${E}/k({\bf{T}})$), ${E}_{\bf{t}}/k$ is Galois.

\subsection{Finite embedding problems over fields} \label{ssec:bas_2}

Given finite Galois field extensions $L/k$ and $E/F$ with $k \subseteq F$ and $L \subseteq E$, we let ${\rm{res}}^{E/F}_{L/k}$ denote the natural restriction map ${\rm{Gal}}(E/F) \rightarrow {\rm{Gal}}(L/k)$ (i.e., ${\rm{res}}^{E/F}_{L/k}(\sigma)(x) = \sigma(x)$ for all $\sigma \in {\rm{Gal}}(E/F)$ and $x \in L$).

A {\it{finite embedding problem}} over a field $k$ is an epimor\-phism $\alpha : G \rightarrow {\rm{Gal}}(L/k)$, where $G$ is a finite group and $L/k$ a Galois field extension. Say that $\alpha$ {\it{splits}} if there is an embedding $\alpha' : {\rm{Gal}}(L/k) \rightarrow G$ with $\alpha \circ \alpha' = {\rm{id}}_{{\rm{Gal}}(L/k)}$. A {\it{solution}} to $\alpha$ is an isomorphism $\beta : {\rm{Gal}}(F/k) \rightarrow G$, where $F$ is a Galois field extension of $k$ containing $L$, such that $\alpha \circ \beta = {\rm{res}}^{F/k}_{L/k}$. Given a finite tuple ${\bf{T}}$ of indeterminates, the composed map
$$\alpha_{k({\bf{T}})}=({\rm{res}}^{L({\bf{T}})/k({\bf{T}})}_{L/k})^{-1} \circ \alpha  : G \rightarrow {\rm{Gal}}(L({\bf{T}})/k({\bf{T}}))$$
is a finite embedding problem over $k({\bf{T}})$. A {\it{geometric solution}} to $\alpha$ is a solution to $\alpha_{k(T)}$.

We now recall the notion of specializations at the level of finite embedding problems. To that end, we need the following lemma:

\begin{lemma} \label{lemma:spec_ffe}
Let ${\bf{T}}$ be an $n$-tuple of indeterminates, $\alpha : G \rightarrow {\rm{Gal}}(L/k)$ a finite embedding problem over a field $k$, and $\beta : {\rm{Gal}}(E/k({\bf{T}}))$ $\rightarrow G$ a solution to $\alpha_{k({\bf{T}})}$. For ${\bf{t}} \in k^n$ such that the specialization $E_{\bf{t}}/k$ of $E/k({\bf{T}})$ at ${\bf{t}}$ is Galois of degree $[E:k({\bf{T}})]$, the map $\beta \circ \varphi^{-1}_{{\bf{t}}} : {\rm{Gal}}(E_{{\bf{t}}}/k) \rightarrow G$ is a solution to $\alpha$, where $\varphi_{{\bf{t}}} : {\rm{Gal}}(E/k({\bf{T}})) \rightarrow {\rm{Gal}}(E_{{\bf{t}}}/k)$ is defined in \eqref{eqvarphi}.
\end{lemma}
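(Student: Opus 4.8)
The plan is to verify directly that the map $\beta \circ \varphi_{{\bf{t}}}^{-1} : {\rm{Gal}}(E_{{\bf{t}}}/k) \to G$ fulfills the three conditions in the definition of a solution to $\alpha$: that it be a group isomorphism, that $E_{{\bf{t}}}/k$ be a Galois extension of $k$ containing $L$, and that $\alpha \circ (\beta \circ \varphi_{{\bf{t}}}^{-1}) = {\rm{res}}^{E_{{\bf{t}}}/k}_{L/k}$. The first is immediate: by hypothesis $E_{{\bf{t}}}/k$ is Galois of degree $[E:k({\bf{T}})]$, so the map $\varphi_{{\bf{t}}}$ of \eqref{eqvarphi} is an isomorphism ${\rm{Gal}}(E/k({\bf{T}})) \to {\rm{Gal}}(E_{{\bf{t}}}/k)$ (see \S\ref{ssec:bas_1}), and $\beta$ is an isomorphism since it solves $\alpha_{k({\bf{T}})}$. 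That $E_{{\bf{t}}}/k$ is Galois is again part of the hypothesis. So the content of the proof is (a) to exhibit $L$ canonically as a subfield of $E_{{\bf{t}}}$, and (b) to check the identity of restriction maps.

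For (a), I would first note that $L[{\bf{T}}]$ is the integral closure of $k[{\bf{T}}]$ in $L({\bf{T}})$: it is integral over $k[{\bf{T}}]$ (being a finitely generated $k[{\bf{T}}]$-module, as $L/k$ is finite), and, as a polynomial ring over a field, it is integrally closed with fraction field $L({\bf{T}})$. Since $L({\bf{T}}) \subseteq E$, we get $L[{\bf{T}}] \subseteq B$, where $B$ denotes the integral closure of $k[{\bf{T}}]$ in $E$. Let $\mathfrak{P}$ be a maximal ideal of $B$ lying over $\mathfrak{m} := \langle T_1 - t_1, \dots, T_n - t_n \rangle$, so that $E_{{\bf{t}}} = B/\mathfrak{P}$. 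Then $\mathfrak{m} L[{\bf{T}}] \subseteq \mathfrak{P} \cap L[{\bf{T}}]$; since $L[{\bf{T}}]/\mathfrak{m} L[{\bf{T}}] \cong L$ (evaluation at ${\bf{t}}$) is a field and $\mathfrak{P} \cap L[{\bf{T}}]$ is a proper ideal, equality holds. As $L \cap \mathfrak{m} L[{\bf{T}}] = \{0\}$, reduction modulo $\mathfrak{P}$ is injective on $L$, and I would identify $L$ with its image, so that $L \subseteq E_{{\bf{t}}}$. (Informally: the specialization at ${\bf{t}}$ of the constant subextension $L({\bf{T}})/k({\bf{T}})$ of $E/k({\bf{T}})$ is $L/k$ itself.)

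For (b), I would obtain $\alpha \circ (\beta \circ \varphi_{{\bf{t}}}^{-1}) = {\rm{res}}^{E_{{\bf{t}}}/k}_{L/k}$ from two auxiliary identities. First, from $\alpha_{k({\bf{T}})} = ({\rm{res}}^{L({\bf{T}})/k({\bf{T}})}_{L/k})^{-1} \circ \alpha$ and $\alpha_{k({\bf{T}})} \circ \beta = {\rm{res}}^{E/k({\bf{T}})}_{L({\bf{T}})/k({\bf{T}})}$ one gets $\alpha \circ \beta = {\rm{res}}^{L({\bf{T}})/k({\bf{T}})}_{L/k} \circ {\rm{res}}^{E/k({\bf{T}})}_{L({\bf{T}})/k({\bf{T}})} = {\rm{res}}^{E/k({\bf{T}})}_{L/k}$ by transitivity of restriction. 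Here one uses that every $\sigma \in {\rm{Gal}}(E/k({\bf{T}}))$ satisfies $\sigma(L) = L$, because $L({\bf{T}})/k({\bf{T}})$ is Galois and $L$ is the algebraic closure of $k$ in $L({\bf{T}})$ (the extension $L({\bf{T}})/L$ being purely transcendental and $L/k$ algebraic); this also shows that ${\rm{res}}^{E/k({\bf{T}})}_{L/k}$ is well-defined. Second, for $\sigma \in {\rm{Gal}}(E/k({\bf{T}}))$ and $x \in L \subseteq B$, the automorphism $\varphi_{{\bf{t}}}(\sigma) = \overline{\sigma}$ of \eqref{eqvarphi} satisfies $\overline{\sigma}(x + \mathfrak{P}) = \sigma(x) + \mathfrak{P}$; since $\sigma(x) \in L$, this means that, under the identification of (a), $\overline{\sigma}$ restricts to $\sigma|_L$ on $L$, i.e., ${\rm{res}}^{E_{{\bf{t}}}/k}_{L/k} \circ \varphi_{{\bf{t}}} = {\rm{res}}^{E/k({\bf{T}})}_{L/k}$. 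Combining the two and composing with $\varphi_{{\bf{t}}}^{-1}$ yields the claim.

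The argument is in the end essentially bookkeeping, but the step that needs genuine care is (a) — recognising $L$ as a canonical subfield of $E_{{\bf{t}}}$, equivalently that reduction modulo $\mathfrak{P}$ is injective on $L$ — together with keeping track, in (b), that the restriction maps are compatible with this identification. Once those are settled, the three defining properties of a solution to $\alpha$ follow at once, completing the proof.
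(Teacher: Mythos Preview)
Your proof is correct and follows essentially the same approach as the paper: both derive $\alpha \circ \beta = {\rm{res}}^{E/k({\bf{T}})}_{L/k}$ from the solution property of $\beta$ via transitivity of restriction, and then reduce to the identity ${\rm{res}}^{E_{{\bf{t}}}/k}_{L/k} \circ \varphi_{{\bf{t}}} = {\rm{res}}^{E/k({\bf{T}})}_{L/k}$, which follows directly from the definition of $\varphi_{{\bf{t}}}$. Your part (a), making explicit the identification of $L$ with its image in $E_{{\bf{t}}} = B/\mathfrak{P}$, is a point the paper leaves implicit but which your treatment handles more carefully.
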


\begin{proof}
As $\beta$ is a solution to $\alpha_{k({\bf{T}})}$, we have 
$$({\rm{res}}^{L({\bf{T}})/k({\bf{T}})}_{L/k})^{-1} \circ \alpha \circ \beta = {\rm{res}}^{E/k({\bf{T}})}_{L({\bf{T}})/k({\bf{T}})},$$ thus yielding 
$$\alpha \circ \beta \circ \varphi_{{\bf{t}}}^{-1} = {\rm{res}}_{L/k}^{L({\bf{T}})/k({\bf{T}})} \circ {\rm{res}}^{E/k({\bf{T}})}_{L({\bf{T}})/k({\bf{T}})} \circ \varphi_{{\bf{t}}}^{-1} = {\rm{res}}^{E/k({\bf{T}})}_{L/k} \circ \varphi_{{\bf{t}}}^{-1}.$$ 
It then suffices to show ${\rm{res}}^{E/k({\bf{T}})}_{L/k} = {\rm{res}}^{E_{{\bf{t}}}/k}_{L/k} \circ \varphi_{{\bf{t}}}$. But the latter equality holds as $\varphi_{{\bf{t}}}(\sigma)(x) = \sigma(x)$ for every $\sigma \in {\rm{Gal}}(E/k({\bf{T}}))$ and every $x \in L$, by the definition of $\varphi_{{\bf{t}}}$.
\end{proof}

The previous lemma motivates the following definition:

\begin{definition} \label{def:spec}
Let ${\bf{T}}$ be an $n$-tuple of indeterminates, $\alpha : G \rightarrow {\rm{Gal}}(L/k)$ a finite embedding problem over a field $k$, and $\beta : {\rm{Gal}}(E/k({\bf{T}}))$ $\rightarrow G$ a solution to $\alpha_{k({\bf{T}})}$. For ${\bf{t}} \in k^n$ such that the specialization $E_{\bf{t}}/k$ of $E/k({\bf{T}})$ at ${\bf{t}}$ is Galois of degree $[E:k({\bf{T}})]$, the solution $\beta \circ \varphi^{-1}_{{\bf{t}}} : {\rm{Gal}}(E_{{\bf{t}}}/k) \rightarrow G$ to $\alpha$ from Lemma \ref{lemma:spec_ffe} is the {\it{specialization}} of $\beta$ at ${\bf{t}}$, denoted $\beta_{\bf{t}}$.
\end{definition}

Finally, we observe that, to realize solutions as specialized solutions, it suffices to work at the level of fields:

\begin{lemma} \label{lemma:spec_ffe2}
Let $\alpha : G \rightarrow {\rm{Gal}}(L/k)$ be a finite embedding problem over a field $k$ and $\gamma : {\rm{Gal}}(F/k) \rightarrow G$ a solution to $\alpha$. Given an $n$-tuple ${\bf{T}}$ of indeterminates, let $E$ be a Galois field extension of $k({\bf{T}})$ containing $L$ with $[E:k({\bf{T}})] = |G|$, and let ${\bf{t}} \in k^n$ be such that the specialization $E_{\bf{t}}/k$ of $E/k({\bf{T}})$ at ${\bf{t}}$ is $F/k$. Then there is an isomorphism $\beta : {\rm{Gal}}(E/k({\bf{T}})) \rightarrow G$ which is a solution to $\alpha_{k({\bf{T}})}$ and such that the specialization $\beta_{\bf{t}}$ is $\gamma$.
\end{lemma}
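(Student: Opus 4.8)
The plan is to verify that the only candidate allowed by Definition \ref{def:spec} does the job. Since $F/k$ is Galois of group $G$ (as $\gamma$ is an isomorphism ${\rm{Gal}}(F/k)\to G$) and $[F:k]=|G|=[E:k({\bf{T}})]$, the hypothesis that the specialization $E_{\bf{t}}/k$ is $F/k$ means in particular that $E_{\bf{t}}/k$ is Galois of degree $[E:k({\bf{T}})]$; hence, by the discussion in \S\ref{ssec:bas_1}, the map $\varphi_{\bf{t}}:{\rm{Gal}}(E/k({\bf{T}}))\to{\rm{Gal}}(E_{\bf{t}}/k)={\rm{Gal}}(F/k)$ of \eqref{eqvarphi} is an isomorphism. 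I would then set $\beta:=\gamma\circ\varphi_{\bf{t}}$. As a composite of two isomorphisms, $\beta$ is an isomorphism ${\rm{Gal}}(E/k({\bf{T}}))\to G$, and $\beta\circ\varphi_{\bf{t}}^{-1}=\gamma$ by construction, so the specialization $\beta_{\bf{t}}$ in the sense of Definition \ref{def:spec} is precisely $\gamma$.

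It then remains to check that $\beta$ solves $\alpha_{k({\bf{T}})}$, i.e., that $\alpha_{k({\bf{T}})}\circ\beta={\rm{res}}^{E/k({\bf{T}})}_{L({\bf{T}})/k({\bf{T}})}$; composing on the left with the isomorphism ${\rm{res}}^{L({\bf{T}})/k({\bf{T}})}_{L/k}$, this is equivalent to $\alpha\circ\beta={\rm{res}}^{E/k({\bf{T}})}_{L/k}$. Since $\gamma$ solves $\alpha$, we have $\alpha\circ\gamma={\rm{res}}^{F/k}_{L/k}$, whence $\alpha\circ\beta={\rm{res}}^{F/k}_{L/k}\circ\varphi_{\bf{t}}$. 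The computation carried out at the end of the proof of Lemma \ref{lemma:spec_ffe}, which uses only that $\varphi_{\bf{t}}(\sigma)$ and $\sigma$ induce the same element of ${\rm{Gal}}(L/k)$ for every $\sigma\in{\rm{Gal}}(E/k({\bf{T}}))$, gives ${\rm{res}}^{E/k({\bf{T}})}_{L/k}={\rm{res}}^{E_{\bf{t}}/k}_{L/k}\circ\varphi_{\bf{t}}={\rm{res}}^{F/k}_{L/k}\circ\varphi_{\bf{t}}$, and the desired identity follows.

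I do not expect any serious obstacle here; this is the least delicate lemma of the section. The one point worth making explicit is the meaning of ``$E_{\bf{t}}/k$ is $F/k$'': one must ensure that the copy of $L$ inside $E_{\bf{t}}$, namely the reduction modulo $\mathfrak{P}$ of $L\subseteq E$, is identified with the copy of $L$ inside $F$ on which the solution $\gamma$ is built. This is automatic: $L$ is finite over $k$, hence $L$ lies in the integral closure of $k[{\bf{T}}]$ in $E$, the reduction map restricts to a $k$-algebra embedding of $L$ into $E_{\bf{t}}$, and since $L/k$ is Galois the field $F=E_{\bf{t}}$ contains a unique subfield $k$-isomorphic to $L$. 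With this identification fixed, the argument above is purely formal.
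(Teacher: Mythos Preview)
Your argument is correct and is essentially identical to the paper's own proof: both set $\beta=\gamma\circ\varphi_{\bf{t}}$, check $\beta_{\bf{t}}=\gamma$ directly from Definition~\ref{def:spec}, and verify $\alpha\circ\beta={\rm{res}}^{E/k({\bf{T}})}_{L/k}$ via the identity ${\rm{res}}^{E/k({\bf{T}})}_{L/k}={\rm{res}}^{E_{\bf{t}}/k}_{L/k}\circ\varphi_{\bf{t}}$ established in the proof of Lemma~\ref{lemma:spec_ffe}. Your extra paragraph making explicit the identification of $L$ inside $E_{\bf{t}}=F$ is a welcome clarification that the paper leaves implicit.
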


\begin{proof}
As $E_{\bf{t}}= F$ and $[E:k({\bf{T}})] = |G|$, we have $[E_{\bf{t}}:k] = [E:k({\bf{T}})]$ and $E_{\bf{t}}/k$ is Galois. Hence, 
the map $\varphi_{\bf{t}}$ from \eqref{eqvarphi} is an isomorphism ${\rm{Gal}}({E}/k({\bf{T}})) \rightarrow {\rm{Gal}}({E}_{{\bf{t}}}/k)$. Using again $E_{\bf{t}}= F$, we get that $\gamma \circ \varphi_{{\bf{t}}} : {\rm{Gal}}(E/k({\bf{T}})) \rightarrow G$ is a well-defined isomorphism. Then $\beta = \gamma \circ \varphi_{{\bf{t}}}$ is a solution to $\alpha_{k({\bf{T}})}$ as $\alpha \circ \beta = {\rm{res}}^{F/k}_{L/k} \circ \varphi_{{\bf{t}}} = {\rm{res}}^{E_{\bf{t}}/k}_{L/k}  \circ \varphi_{{\bf{t}}} = {\rm{res}}^{E/k({\bf{T}})}_{L/k}$, the latter equality being observed in the proof of Lemma \ref{lemma:spec_ffe}. Finally, from our choice of $\beta$, the claimed equality $\beta_{\bf{t}} = \gamma$ is clear.
\end{proof}

\subsection{Valuations} \label{ssec:bas_3}

See, e.g., \cite{Jar91} for more on the following. An {\it{ordered group}} is an abelian (additive) group together with a total ordering $<$ such that $\alpha < \beta$ implies $\alpha + \gamma < \beta + \gamma$ for all $\alpha, \beta, \gamma \in \Gamma$. A (Krull) {\it{valuation}} of a field $k$ is a surjective map $v : k^* \rightarrow \Gamma$, with $\Gamma$ a non-trivial ordered group (the {\it{value group}} of $v$), such that $v(ab) = v(a) + v(b)$ for all $a, b \in k^*$ and $v(a + b) \geq {\rm{min}} \{v(a), v(b)\}$ for all $a, b \in k^*$ with $a+b \not=0$. Add the symbol $\infty$ to $\Gamma$ with the conventions $\infty + \infty = \alpha + \infty = \infty + \alpha = \infty$ and $\alpha < \infty$ for every $\alpha \in \Gamma$. Extend $v$ to $k$ by defining $v(0)=\infty$. Then $v$ fulfills $v(ab) = v(a) + v(b)$ and $v(a+b) \geq {\rm{min}} \{v(\alpha), v(\beta)\}$ for all $a, b \in k$. The pair $(k,v)$ is a {\it{valued field}}. Two valuations $v_1 : k^* \rightarrow \Gamma_1$ and $v_2 : k^* \rightarrow \Gamma_2$ of a field $k$ are {\it{equivalent}} if there is an order preserving isomorphism $f : \Gamma_1 \rightarrow \Gamma_2$ with $f \circ v_1 = v_2$.

Let $v : k^* \rightarrow \Gamma$ be a valuation of a field $k$. A valuation $w : L^* \rightarrow \Delta$ of a field extension $L$ of $k$ {\it{extends}} $v$ if there is an order preserving monomorphism $f : \Gamma \rightarrow \Delta$ such that $w(a) = f \circ v(a)$ for $a \in k^*$; say that $(L,w)$ is an {\it{extension}} of $(k,v)$. The valued field $(k,v)$ is {\it{Henselian}} if $v$ extends uniquely (up to equivalence) to each algebraic field extension of $k$. The valued field $(k,v)$ has a separable algebraic extension $(k_v^h, v^h)$ which is Henselian and such that, if $(L,w)$ is an extension of $(k,v)$ which is Henselian, then $(L,w)$ extends $(k_v^h, v^h)$. The valued field $(k_v^h, v^h)$ is unique up to $k$-isomorphism; it is the {\it{Henselization}} of $(k,v)$.

\section{Proof of Theorem \ref{thm:intro_1}} \label{sec:proof_1}

The aim of this section is to prove Theorem \ref{thm:intro_1} from the introduction (see \S\ref{ssec:proof_1.3}). As already alluded to in \S\ref{ssec:intro_1}, the proof requires several lemmas on ramification in higher dimension and specializations of function field extensions in several variables that we establish in \S\ref{ssec:proof_1.1} and \S\ref{ssec:proof_1.2}, respectively.

\subsection{Ramification in higher dimension} \label{ssec:proof_1.1}

Let $F/k$ be a finite Galois field extension. Suppose $k$ is the fraction field of an integrally closed Noetherian domain $A$. If there is no possible confusion, we will say that a $k$-basis of $F$ whose elements are integral over $A$ is an {\it{integral}} basis.  A maximal ideal $\mathfrak{p}$ of $A$ is {\it{unramified}} in $F/k$ if there exists an integral $k$-basis $y_1, \dots, y_d$ of $F$ whose discriminant $\Delta_{F/k}(y_1, \dots, y_d)$ fulfills $\Delta_{F/k}(y_1, \dots, y_d) \in A \setminus \mathfrak{p}$. 

\begin{lemma} \label{lem:ram_lin_disj}
Let $k$ be the fraction field of an integrally closed Noetherian domain $A$, let $\mathfrak{p}$ be a maximal ideal of $A$, and let $E/k$ be a finite Galois field extension. Let $F$ be a field extension of $k$ which is linearly disjoint from $E$ over $k$.

\vspace{0.5mm}

\noindent
{\rm{(1)}} Assume that $F/k$ is finite Galois, and that $\mathfrak{p}$ is unramified in both $E/k$ and $F/k$. Then $\mathfrak{p}$ is unramified in $EF/k$.

\vspace{0.5mm}

\noindent
{\rm{(2)}} Assume that $\mathfrak{p}$ is unramified in $E/k$, and let $\mathfrak{P}$ be a maximal ideal of the integral closure of $A$ in $F$ lying over $\mathfrak{p}$. Then $\mathfrak{P}$ is unramified in $EF/F$.
\end{lemma}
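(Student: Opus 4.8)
The plan is to reduce both parts to the classical theory of unramified primes in towers of Dedekind domains by a localization argument, using the fact that unramifiedness is a local condition that can be tested after localizing $A$ at $\mathfrak{p}$. First I would replace $A$ by its localization $A_\mathfrak{p}$, a Noetherian local domain which is integrally closed, so that $\mathfrak{p}$ becomes the unique maximal ideal; note that whether an integral basis with $\mathfrak{p}$-unit discriminant exists is unchanged by this localization, since clearing denominators shows the integral closures of $A$ and of $A_\mathfrak{p}$ in a finite extension have the same discriminant ideals at $\mathfrak{p}$. The key observation is then that for a finite Galois extension $E/k$, $\mathfrak{p}$ being unramified (in the discriminant sense above) is equivalent to the integral closure $B_E$ of $A_\mathfrak{p}$ in $E$ being unramified over $A_\mathfrak{p}$ in the usual sense: $B_E$ is étale, equivalently $\mathfrak{p} B_E$ is a radical ideal with separable residue extensions, equivalently the different/discriminant is a unit. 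I would record this equivalence as the technical heart of the reduction.

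For part (1), I would proceed as follows. Since $E/k$ and $F/k$ are finite Galois and linearly disjoint over $k$, the compositum $EF/k$ is Galois with $\mathrm{Gal}(EF/k) \cong \mathrm{Gal}(E/k) \times \mathrm{Gal}(F/k)$, and $EF = E \otimes_k F$. Working over $A_\mathfrak{p}$, unramifiedness of $\mathfrak{p}$ in $E/k$ and in $F/k$ says that $B_E$ and $B_F$ are étale (hence unramified) $A_\mathfrak{p}$-algebras. The tensor product $B_E \otimes_{A_\mathfrak{p}} B_F$ is then an étale $A_\mathfrak{p}$-algebra, being a tensor product of étale algebras; it is a domain with fraction field $EF$ precisely because of linear disjointness, and it is integrally closed since an étale algebra over a normal domain is normal. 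Hence it is the integral closure of $A_\mathfrak{p}$ in $EF$, and being étale it has unit discriminant, so $\mathfrak{p}$ is unramified in $EF/k$. I would then translate the existence of a unit-discriminant integral basis back to $A$ by clearing denominators, completing part (1).

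For part (2), the situation is similar but $F/k$ need not be Galois, so I would work with the localization $C = (B_F)_\mathfrak{P}$ of the integral closure $B_F$ of $A$ in $F$ at the chosen maximal ideal $\mathfrak{P}$ over $\mathfrak{p}$; this $C$ is a Noetherian local integrally closed domain with fraction field $F$ and maximal ideal $\mathfrak{P}C$. I want to show $\mathfrak{P}$ is unramified in $EF/F$, i.e.\ the integral closure of $C$ in $EF$ is étale over $C$. By base change, since $B_E$ is étale over $A_\mathfrak{p}$ and $C$ is an $A_\mathfrak{p}$-algebra, $B_E \otimes_{A_\mathfrak{p}} C$ is étale over $C$; linear disjointness of $E$ and $F$ over $k$ (hence of $E$ and $F$ inside their compositum) ensures $B_E \otimes_{A_\mathfrak{p}} C$ is a domain with fraction field $EF$, and again étale-over-normal gives normality, so it is the integral closure of $C$ in $EF$. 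Its discriminant over $C$ is a unit, so $\mathfrak{P}$ is unramified in $EF/F$, and I would conclude by passing from $C$ back to $B_F$ as before.

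The main obstacle I anticipate is the careful bookkeeping connecting the somewhat ad hoc discriminant-basis definition of ``unramified'' used in the paper with the standard scheme-theoretic notion of an étale (or unramified) algebra over a normal Noetherian local domain, and in particular verifying that an étale algebra over an integrally closed Noetherian domain is again integrally closed in its total ring of fractions — this is where normality of $A$ (not just its being a Dedekind domain) genuinely enters, and where one must be a little careful since $A$ is only assumed Noetherian and integrally closed, not regular. Everything else is a routine consequence of base change properties of étale morphisms and the behaviour of integral closure under localization.
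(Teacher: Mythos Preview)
Your approach is correct, but the paper takes a much more elementary and direct route that avoids localization, \'etale algebras, and the bookkeeping you anticipate. The paper works directly with the discriminant-basis definition: given an integral $k$-basis $y_1,\dots,y_e$ of $E$ with $\Delta_{E/k}(y_1,\dots,y_e)\in A\setminus\mathfrak p$, linear disjointness makes $y_1,\dots,y_e$ an integral $F$-basis of $EF$ with the \emph{same} discriminant, which lies in $A\setminus\mathfrak p\subseteq B_F\setminus\mathfrak P$; this is (2) in two lines. For (1), the paper takes in addition an integral $k$-basis $z_1,\dots,z_f$ of $F$ with discriminant outside $\mathfrak p$, forms the product basis $\{y_iz_j\}$ of $EF/k$, and computes its discriminant via the tower formula (Lemma~\ref{lemma:discriminant}) as $\Delta_{F/k}(z_j)^e\cdot N_{F/k}(\Delta_{EF/F}(y_i))=\Delta_{F/k}(z_j)^e\cdot\Delta_{E/k}(y_i)^f\notin\mathfrak p$. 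Your \'etale/base-change argument is more conceptual and would generalize more readily, but it requires verifying the equivalence between the paper's ad hoc definition and the scheme-theoretic one and invoking that \'etale-over-normal is normal---precisely the obstacle you flag---whereas the paper's approach is a half-page computation needing nothing beyond the multiplicativity of discriminants in towers.
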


We need the following classical lemma about the discriminant of a composite basis.

\begin{lemma} \label{lemma:discriminant}
Let $F/k$ and $L/k$ be finite Galois field extensions with $L \subseteq F$. Let $y_1, \dots, y_d$ (resp., $z_1, \dots, z_e$) be a $k$-basis (resp., an $L$-basis) of $L$ (resp., of $F$). If $N_{L/k}$ denotes the norm in $L/k$, then the discriminant of the $k$-basis 
$y_1 z_1, \dots, y_1 z_e, \dots, y_d z_1, \dots, y_d z_e$ of $F$ equals $\Delta_{L/k}(y_1, \dots, y_d)^{e} \cdot N_{L/k}(\Delta_{F/L}(z_1, \dots, z_e)).$
\end{lemma}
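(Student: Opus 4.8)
The plan is to compute the discriminant through conjugates. Since $F/k$ is Galois, hence separable, for any $k$-basis $w_1, \dots, w_n$ of $F$ one has $\Delta_{F/k}(w_1, \dots, w_n) = \det\big(\rho(w_\nu)\big)^2$, where $\rho$ ranges over ${\rm{Gal}}(F/k)$ and $\nu$ over $\{1, \dots, n\}$; likewise $\Delta_{F/L}(z_1, \dots, z_e) = \det\big(\tau(z_b)\big)^2$ with $\tau$ ranging over ${\rm{Gal}}(F/L)$, and $\Delta_{L/k}(y_1, \dots, y_d) = \det\big(\sigma(y_a)\big)^2$ with $\sigma$ ranging over ${\rm{Gal}}(L/k)$. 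The strategy is to factor the conjugate matrix of the composite basis into pieces governed by these two smaller conjugate matrices.

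First I would fix the bookkeeping. Write $H = {\rm{Gal}}(F/L) \leq G = {\rm{Gal}}(F/k)$, choose coset representatives $\sigma_1, \dots, \sigma_d$ of $G/H$ and an enumeration $\tau_1, \dots, \tau_e$ of $H$, so that $\{\sigma_i \tau_j\}_{1 \leq i \leq d,\, 1 \leq j \leq e}$ enumerates $G$. Since the $\sigma_i$ lie in distinct cosets of $H = {\rm{Gal}}(F/L)$ and $L/k$ is Galois, their restrictions $\sigma_i|_L$ are exactly the $d$ elements of ${\rm{Gal}}(L/k)$; in particular $\prod_{i=1}^d \sigma_i(x) = N_{L/k}(x)$ for every $x \in L$. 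Let $C$ be the $de \times de$ conjugate matrix of the composite basis, with rows indexed by pairs $(i,j)$ (the automorphism $\sigma_i \tau_j$) and columns by pairs $(a,b)$ (the basis element $y_a z_b$). Because $y_a \in L$ is fixed by $\tau_j \in H$, we get $C_{(i,j),(a,b)} = (\sigma_i \tau_j)(y_a z_b) = \sigma_i(y_a) \cdot (\sigma_i \tau_j)(z_b)$.

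The key step is the factorization of $C$. Let $D = \big(\tau_j(z_b)\big)_{j,b}$ be the conjugate matrix for the $L$-basis $z_1, \dots, z_e$ of $F$, so $\det(D)^2 = \Delta_{F/L}(z_1, \dots, z_e)$, and let $D_i$ be obtained by applying $\sigma_i$ to each entry of $D$, so that $\det(D_i) = \sigma_i(\det D)$ and the $(i,a)$-block of $C$ equals $\sigma_i(y_a) D_i$. A direct check then gives the block identity $C = {\rm{diag}}(D_1, \dots, D_d) \cdot (S \otimes I_e)$, where $S = \big(\sigma_i(y_a)\big)_{i,a}$ satisfies $\det(S)^2 = \Delta_{L/k}(y_1, \dots, y_d)$ and $\otimes$ denotes the Kronecker product. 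Taking determinants yields $\det C = \big(\prod_{i=1}^d \det D_i\big) \cdot \det(S)^e$. Squaring, and using $\det(D_i)^2 = \sigma_i\big(\Delta_{F/L}(z_1, \dots, z_e)\big)$ together with $\prod_{i=1}^d \sigma_i(x) = N_{L/k}(x)$ applied to $x = \Delta_{F/L}(z_1, \dots, z_e) \in L$, I obtain $\Delta_{F/k}(\{y_a z_b\}) = \det(C)^2 = \Delta_{L/k}(y_1, \dots, y_d)^{e} \cdot N_{L/k}\big(\Delta_{F/L}(z_1, \dots, z_e)\big)$, which is the claim.

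I expect the only genuine subtlety to be that $\det D$ need not itself lie in $L$ — only its square does — so one cannot write $\prod_i \sigma_i(\det D) = N_{L/k}(\det D)$ directly; this is circumvented by squaring before invoking the norm. The remaining point requiring a modicum of care is the verification of the block factorization of $C$ (equivalently, the transitivity ${\rm{Tr}}_{F/k} = {\rm{Tr}}_{L/k} \circ {\rm{Tr}}_{F/L}$ of trace forms that underlies it), but this is routine.
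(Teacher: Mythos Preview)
Your argument is correct: the block factorization $C = {\rm{diag}}(D_1,\dots,D_d)\cdot(S\otimes I_e)$ holds exactly as you describe, and your handling of the subtlety that only $\det(D)^2$, not $\det D$, lies in $L$ is the right way to pass to the norm. The paper itself does not prove this lemma---it is stated as a classical fact without proof---so there is no argument to compare yours against; what you have written is the standard proof of this transitivity formula.
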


\begin{proof}[Proof of Lemma \ref{lem:ram_lin_disj}]
We show (1) and (2) simultaneously. As $\mathfrak{p}$ is unramified in $E/k$ (in both statements), there is an inte\-gral $k$-basis $y_1, \dots, y_e$ of $E$ such that $\Delta_{E/k}(y_1, \dots, y_e) \in A \setminus \mathfrak{p}$. Moreover, $E$ and $F$ are linearly disjoint over $k$. Then $y_1, \dots, y_e$ yield an integral $F$-basis of $EF$, and the discriminant of the latter basis cannot be in $\mathfrak{P}$. Hence, $\mathfrak{P}$ is unramified in $EF/F$, thus establishing (2). Now, in the situation of (1), there is an integral $k$-basis $z_1, \dots, z_f$ of $F$ such that $\Delta_{F/k}(z_1, \dots, z_f) \in A \setminus \mathfrak{p}$. From our assumption that $E$ and $F$ are linearly disjoint over $k$, the elements $y_1z_1, \dots, y_1z_f, \dots, y_e z_1, \dots, y_ez_f$ provide an integral $k$-basis of $EF$. Lemma \ref{lemma:discriminant} then yields that the discriminant $\Delta$ of the latter basis equals
$$\begin{array} {ccl}
 \Delta_{F/k}(z_1, \dots, z_f)^e \cdot N_{F/k}(\Delta_{EF/F}(y_1, \dots, y_e)) & = & \Delta_{F/k}(z_1, \dots, z_f)^e \cdot  N_{F/k}(\Delta_{E/k}(y_1, \dots, y_e)) \\
 & = & \Delta_{F/k}(z_1, \dots, z_f)^e \cdot \Delta_{E/k}(y_1, \dots, y_e)^f.
\end{array}$$
Since neither $\Delta_{E/k}(y_1, \dots, y_e)$ nor $\Delta_{F/k}(z_1, \dots, z_f)$ is in $\mathfrak{p}$, we get $\Delta \not \in \mathfrak{p}$, as needed.
\end{proof}

\subsection{Specialization lemmas} \label{ssec:proof_1.2}

We start with specializations of composite field extensions:

\begin{lemma} \label{compositum}
Let $E/k({\bf{T}}) = E/k(T_1, \dots, T_n)$ be a finite Galois field extension and let ${\bf{t}}=(t_1, \dots, t_n) \in k^n$ be such that $\langle T_1-t_1, \dots, T_n-t_n \rangle$ is unramified in $E/k({\bf{T}})$.

\vspace{0.5mm}

\noindent
{\rm{(1)}} Let $F/k({\bf{T}})$ be a finite Galois field extension such that $E$ and $F$ are linearly disjoint over $k({\bf{T}})$. Then the specialization $(EF)_{{\bf{t}}}/k$ of $EF/k({\bf{T}})$ at ${\bf{t}}$ equals $E_{\bf{t}}F_{\bf{t}}/k$, where $E_{\bf{t}}/k$ (resp., $F_{\bf{t}}/k$) is the specialization of $E/k({\bf{T}})$ (resp., of $F/k({\bf{T}})$) at ${\bf{t}}$.

\vspace{0.5mm}

\noindent
{\rm{(2)}} Let $U$ be a new indeterminate. Then the specialization $(E(U))_{(u, {\bf{t}})}/k$ of $E(U)/k(U,{\bf{T}})$ at $(u,{\bf{t}})$ equals the specialization $E_{\bf{t}}/k$ of $E/k({\bf{T}})$ at ${\bf{t}}$ for every $u \in k$.
\end{lemma}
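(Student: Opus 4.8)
\textbf{Proof plan for Lemma \ref{compositum}.}

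The plan is to reduce both parts to the behaviour of the specialization maps $\varphi_{\bf{t}}$ from \eqref{eqvarphi} applied to the various Galois extensions involved, using the fact that an unramified prime gives an isomorphism on the relevant decomposition group. First I would fix notation: let $A = k[{\bf{T}}]$, let $\mathfrak{p} = \langle T_1 - t_1, \dots, T_n - t_n \rangle$, and let $B_E$, $B_F$, $B_{EF}$ denote the integral closures of $A$ in $E$, $F$, $EF$ respectively. Since $\mathfrak{p}$ is unramified in $E/k({\bf{T}})$, the specialization $E_{\bf{t}}/k$ is Galois of degree $[E:k({\bf{T}})]$ and $\varphi_{\bf{t}} : {\rm{Gal}}(E/k({\bf{T}})) \rightarrow {\rm{Gal}}(E_{\bf{t}}/k)$ is an isomorphism. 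For part (1), the key point is that $\mathfrak{p}$ is also unramified in $F/k({\bf{T}})$ and in $EF/k({\bf{T}})$: the first because $\mathfrak{p}$ is unramified in $EF/k({\bf{T}}) \supseteq F$ would follow once we know it for $EF$, but more directly I would invoke Lemma \ref{lem:ram_lin_disj}(1) — however that lemma assumes $\mathfrak{p}$ is unramified in \emph{both} $E/k$ and $F/k$, so I need the hypothesis that $\mathfrak{p}$ is unramified in $F/k({\bf{T}})$ as well. Here one should be slightly careful: the statement only assumes $\mathfrak{p}$ unramified in $E/k({\bf{T}})$. So the honest route is to first choose a maximal ideal $\mathfrak{Q}$ of $B_{EF}$ lying over $\mathfrak{p}$, and work with decomposition groups directly rather than assuming $EF/k({\bf{T}})$ is globally unramified at $\mathfrak{p}$.

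Concretely, for part (1) I would argue as follows. Pick a maximal ideal $\mathfrak{Q}$ of $B_{EF}$ over $\mathfrak{p}$, and let $\mathfrak{P}_E = \mathfrak{Q} \cap B_E$, $\mathfrak{P}_F = \mathfrak{Q} \cap B_F$. Because $E$ and $F$ are linearly disjoint over $k({\bf{T}})$, the restriction map ${\rm{Gal}}(EF/k({\bf{T}})) \rightarrow {\rm{Gal}}(E/k({\bf{T}})) \times {\rm{Gal}}(F/k({\bf{T}}))$ is an isomorphism, and under it the decomposition group $D_{\mathfrak{Q}}$ maps into $D_{\mathfrak{P}_E} \times D_{\mathfrak{P}_F}$, while the inertia group $I_{\mathfrak{Q}}$ maps into $I_{\mathfrak{P}_E} \times I_{\mathfrak{P}_F}$. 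Since $\mathfrak{p}$ is unramified in $E/k({\bf{T}})$, we have $I_{\mathfrak{P}_E} = 1$, so $I_{\mathfrak{Q}} \hookrightarrow 1 \times I_{\mathfrak{P}_F}$, i.e. $I_{\mathfrak{Q}} \subseteq {\rm{Gal}}(EF/E)$. Passing to residue fields and unwinding \eqref{eqvarphi}: the residue field $(EF)_{\bf{t}}$ is generated over $k$ by the images of $B_E$ and $B_F$, hence equals $E_{\bf{t}} \cdot F_{\bf{t}}$ as a compositum inside a fixed algebraic closure, because the image of $B_E$ mod $\mathfrak{Q}$ is precisely $B_E/\mathfrak{P}_E = E_{\bf{t}}$ and similarly for $F$. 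This already gives $(EF)_{\bf{t}} = E_{\bf{t}} F_{\bf{t}}$ provided one checks that the natural embeddings of $E_{\bf{t}}$ and $F_{\bf{t}}$ into $(EF)_{\bf{t}}$ are compatible with how $E_{\bf{t}}$ and $F_{\bf{t}}$ are defined (which they are, since $E_{\bf{t}}$ is well-defined independently of the chosen prime, $E/k({\bf{T}})$ being Galois). The identification of fields is really the content; the degree count — which tells us whether the inclusion $(EF)_{\bf{t}} \supseteq E_{\bf{t}} F_{\bf{t}}$ is an equality — comes for free from the fact that it is an inclusion of the form "residue field contains compositum of residue fields" and both sides sit inside $(EF)_{\bf{t}}$.

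Part (2) I would treat as a special case of part (1) together with a triviality. Write $k(U, {\bf{T}}) = k(U)({\bf{T}})$ and note $E(U) = E \cdot k(U, {\bf{T}})$; the extension $E(U)/k(U,{\bf{T}})$ is Galois with group canonically ${\rm{Gal}}(E/k({\bf{T}}))$ (a base change of a Galois extension by the purely transcendental extension $k(U)({\bf{T}})/k({\bf{T}})$, using that constants do not change). One checks that the prime $\langle U - u, T_1 - t_1, \dots, T_n - t_n \rangle$ of $k[U, {\bf{T}}]$ is unramified in $E(U)/k(U, {\bf{T}})$ — indeed, an integral $k({\bf{T}})$-basis of $E$ with unit discriminant at $\mathfrak{p}$ remains an integral $k(U)({\bf{T}})$-basis of $E(U)$ with the same discriminant, which is a unit at $\langle U-u\rangle \cdot (\cdots)$ since it does not involve $U$ at all. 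Then specializing $U \mapsto u$ and ${\bf{T}} \mapsto {\bf{t}}$ is, on the level of the integral closure, the same as specializing ${\bf{T}} \mapsto {\bf{t}}$ in $E/k({\bf{T}})$ and then the variable $U$ simply disappears (it was already a constant-extension variable), so $(E(U))_{(u, {\bf{t}})} = E_{\bf{t}}$.

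The step I expect to be the main obstacle is the careful bookkeeping in part (1) identifying the residue field $(EF)_{\bf{t}}$ with the \emph{compositum} $E_{\bf{t}} F_{\bf{t}}$ taken inside a single algebraic closure of $k$ — in particular making sure that the copies of $E_{\bf{t}}$ and $F_{\bf{t}}$ produced as subfields of $(EF)_{\bf{t}}$ genuinely coincide with the extensions $E_{\bf{t}}/k$ and $F_{\bf{t}}/k$ as they were defined via their own integral closures, and that the linear disjointness of $E$ and $F$ over $k({\bf{T}})$ does transfer to the required independence of residue fields (rather than, say, a proper quotient of the compositum). Everything else is either a direct discriminant computation (for which Lemma \ref{lemma:discriminant} and Lemma \ref{lem:ram_lin_disj} are tailor-made) or formal manipulation of \eqref{eqvarphi}.
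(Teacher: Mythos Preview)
Your plan for part (2) is essentially the paper's argument: take an integral $k({\bf T})$-basis of $E$ whose discriminant is a unit at ${\bf t}$, observe it remains an integral $k(U,{\bf T})$-basis of $E(U)$ with the same ($U$-free) discriminant, and conclude that reduction at $(u,{\bf t})$ gives $E_{\bf t}$.

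For part (1), however, there is a genuine gap, and it sits exactly where you flag the ``main obstacle'' --- but it is not a bookkeeping issue. The sentence ``the residue field $(EF)_{\bf{t}}$ is generated over $k$ by the images of $B_E$ and $B_F$'' is the entire content of the hard inclusion $(EF)_{\bf t}\subseteq E_{\bf t}F_{\bf t}$, and nothing in your decomposition/inertia setup proves it. The problem is that $B_{EF}$ is in general strictly larger than the subring generated by $B_E$ and $B_F$, so an arbitrary $x\in B_{EF}$ need not reduce, a priori, to something expressible in the images of $B_E$ and $B_F$. Your inertia computation $I_{\mathfrak Q}\subseteq{\rm Gal}(EF/E)$ is correct and could be pushed, via Galois correspondence on $(EF)_{\bf t}/k$, to the desired equality --- but only once you know $(EF)_{\bf t}/k$ is Galois, and since no unramifiedness is assumed for $F/k({\bf T})$, the residue extension $F_{\bf t}/k$ (hence $(EF)_{\bf t}/k$) may fail to be separable.

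The paper bypasses groups entirely and argues elementarily with the discriminant. Choose an integral $k({\bf T})$-basis $y_1,\dots,y_e$ of $E$ with discriminant $\Delta({\bf T})$ satisfying $\Delta({\bf t})\neq 0$; by linear disjointness this is also an $F$-basis of $EF$. For $x\in B_{EF}$ write $x=\sum_i\lambda_i y_i$ with $\lambda_i\in F$; applying ${\rm tr}_{EF/F}$ and Cramer's rule shows each $\Delta({\bf T})\lambda_i$ lies in the integral closure of $k[{\bf T}]$ in $F$. Since $\Delta({\bf T})$ is a unit at ${\bf t}$, the reduction of $x=\sum_i \frac{\Delta({\bf T})\lambda_i}{\Delta({\bf T})}y_i$ modulo $\mathfrak Q$ visibly lies in $E_{\bf t}F_{\bf t}$. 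This works uniformly, with no separability hypothesis on the $F$-side. The reverse inclusion $E_{\bf t}F_{\bf t}\subseteq(EF)_{\bf t}$ is the trivial one, coming from $E,F\subseteq EF$.
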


\begin{proof}
(1) Let $\mathfrak{P}$ be a maximal ideal of the integral closure of $k[{\bf{T}}]$ in $E$ containing $T_1 - t_1, \dots, T_n-t_n$. As $\langle T_1-t_1, \dots, T_n-t_n \rangle$ is unramified in $E/k({\bf{T}})$, there is an integral $k({\bf{T}})$-basis $y_1, \dots, y_e$ of $E$ such that $\Delta({\bf{t}}) \not=0$, where $\Delta({\bf{T}}) \in k[{\bf{T}}]$ is the discriminant of $y_1, \dots, y_e$. Let $B$ denote the integral closure of $k[{\bf{T}}]$ in $EF$. As $E$ and $F$ are linearly disjoint over $k({\bf{T}})$, the elements $y_1, \dots, y_e$ yield an integral $F$-basis of $EF$. Now, given $x \in B$, there are $\lambda_1, \dots, \lambda_e \in F$ with $x = \lambda_1 y_1 + \cdots + \lambda_e y_e.$ For $i \in \{1, \dots, e\}$, we then have 
$${\rm{tr}}_{EF/F}(xy_i) = \lambda_1 {\rm{tr}}_{EF/F}(y_1y_i) + \cdots + \lambda_e {\rm{tr}}_{EF/F}(y_ey_i),$$ 
with ${\rm{tr}}_{EF/F}$ the trace in $EF/F$. As $x, y_1, \dots, y_e \in B$, we get that $\Delta({\bf{T}}) \lambda_i$ is in the integral closure of $k[{\bf{T}}]$ in $F$ for every $i \in \{1, \dots, e\}$, by applying Kramer's law. Hence, if $\mathfrak{Q}$ denotes a maximal ideal of $B$ lying over $\mathfrak{P}$, then the reduction modulo $\mathfrak{Q}$ of 
$$x = \lambda_1 y_1 + \cdots + \lambda_e y_e = \frac{\Delta({\bf{T}}) \lambda_1}{\Delta({\bf{T}})} y_1 + \cdots + \frac{\Delta({\bf{T}}) \lambda_e}{\Delta({\bf{T}})} y_e$$
lies in the compositum of $E_{\bf{t}}$ and $F_{\bf{t}}$, thus showing $(EF)_{\bf{t}} \subseteq E_{\bf{t}} F_{\bf{t}}$. As to the converse, it suffices to notice that $E \subseteq EF$ and $F \subseteq EF$ yield $E_{\bf{t}} \subseteq (EF)_{\bf{t}}$ and $F_{\bf{t}} \subseteq (EF)_{\bf{t}}$.

\vspace{1mm}

\noindent
(2) As above, there is an integral $k({\bf{T}})$-basis $y_1, \dots, y_e$ of $E$ with $\Delta({\bf{t}}) \not=0$, where $\Delta({\bf{T}}) \in k[{\bf{T}}]$ is the discriminant of $y_1, \dots, y_e$. As $E$ and $k({\bf{T}},U)$ are linearly disjoint over $k({\bf{T}})$, the elements $y_1, \dots, y_e$ provide a $k({\bf{T}},U)$-basis of $E(U)$, which is contained in the integral closure $B$ of $k[U, {\bf{T}}]$ in $E(U)$. Moreover, we have $\Delta({\bf{T}}) B \subseteq k[U, {\bf{T}}] {y_1} \oplus \cdots \oplus k[U, {\bf{T}}] {y_e}.$ Let $u \in k$. As $\Delta({\bf{T}})$ does not depend on $U$ and $\Delta({\bf{t}}) \not=0$, the above inclusion yields that $(E(U))_{(u, {\bf{t}})}$ is the field generated over $k$ by the reductions of $y_1, \dots, y_e$ modulo a maximal ideal of $B$ lying over $\langle U-u, T_1-t_1, \dots, T_n-t_n \rangle$, i.e., equals $E_{\bf{t}}$.
\end{proof}

The next lemma deals with successive specializations:

\begin{lemma} \label{je_sais_plus}
Let $E/k({\bf{T}}) = E/k(T_1, \dots, T_n)$ be a finite Galois field extension, $t_1 \in k$, and ${\bf{t'}}=(t_2(T_1), \dots, t_n(T_1)) \in k[T_1]^{n-1}$ such that the specialization $E_{\bf{t'}}/k(T_1)$ of $E/k(T_1)(T_2, \dots, T_n)$ at ${\bf{t'}}$ is Galois. Then the specialization $E_{\bf{t}}/k$ of $E/k({\bf{T}})$ at ${\bf{t}} = (t_1, t_2(t_1), \dots, t_n(t_1))$ is a subextension of the specialization $(E_{\bf{t'}})_{t_1}/k$ of $E_{\bf{t'}}/k(T_1)$ at $t_1$.
\end{lemma}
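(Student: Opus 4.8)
The plan is to chase the two specializations through the tower $k[\mathbf{T}] \subseteq B \subseteq E$, where $B$ is the integral closure of $k[\mathbf{T}]$ in $E$, and then compare with the integral closure $B'$ of $k(T_1)[T_2,\dots,T_n]$ in $E$, which is also the integral closure of $k[T_1][T_2,\dots,T_n]$ localized suitably. First I would fix a maximal ideal $\mathfrak{P}$ of $B$ lying over $\langle T_1-t_1, T_2-t_2(t_1),\dots, T_n-t_n(t_1)\rangle$; the residue field $B/\mathfrak{P}$ is by definition (a copy of) $E_{\mathbf{t}}$. The key observation is that the prime $\langle T_1 - t_1, \dots, T_n - t_n(t_1)\rangle$ of $k[\mathbf{T}]$ contains the prime $\mathfrak{q} = \langle T_2 - t_2(T_1), \dots, T_n - t_n(T_1)\rangle$, since substituting $T_1 = t_1$ into a generator $T_i - t_i(T_1)$ gives $T_i - t_i(t_1)$. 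Hence $\mathfrak{P}$ contains a prime $\mathfrak{P}'$ of $B$ lying over $\mathfrak{q}$, and the residue ring $B/\mathfrak{P}'$ — after identifying $k[\mathbf{T}]/\mathfrak{q}$ with $k[T_1]$ — is an integral extension of $k[T_1]$ whose fraction field is (a copy of) $E_{\mathbf{t'}}$, by the definition of the specialization $E_{\mathbf{t'}}/k(T_1)$. So $E_{\mathbf{t'}}$ is the fraction field of $B/\mathfrak{P}'$, and $E_{\mathbf{t}} = B/\mathfrak{P}$ is a quotient of $B/\mathfrak{P}'$ by the image $\overline{\mathfrak{P}}$ of $\mathfrak{P}$; this image lies over $\langle T_1 - t_1\rangle \subseteq k[T_1]$.

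Next I would pass to the integral closure: let $C$ be the integral closure of $k[T_1]$ (equivalently, of $B/\mathfrak{P}'$) in $E_{\mathbf{t'}}$, so $B/\mathfrak{P}' \subseteq C$ and $C$ has the same fraction field $E_{\mathbf{t'}}$. Since $E_{\mathbf{t'}}/k(T_1)$ is assumed Galois, $(E_{\mathbf{t'}})_{t_1}$ is well-defined as the residue field of $C$ at a maximal ideal lying over $\langle T_1 - t_1\rangle$, and it is independent of the chosen ideal. Choose a maximal ideal $\mathfrak{M}$ of $C$ lying over $\overline{\mathfrak{P}}$ (possible by lying-over, $C$ being integral over $B/\mathfrak{P}'$); then $\mathfrak{M}$ lies over $\langle T_1 - t_1\rangle$ in $k[T_1]$, so $C/\mathfrak{M}$ is a copy of $(E_{\mathbf{t'}})_{t_1}$. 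The natural surjection $B/\mathfrak{P}' \to (B/\mathfrak{P}')/\overline{\mathfrak{P}} = E_{\mathbf{t}}$ factors through the inclusion into $C$ followed by reduction modulo $\mathfrak{M}$, since $\overline{\mathfrak{P}} = \mathfrak{M} \cap (B/\mathfrak{P}')$; this exhibits an embedding of $E_{\mathbf{t}}$ into $C/\mathfrak{M} \cong (E_{\mathbf{t'}})_{t_1}$ fixing $k$, which is exactly the claimed statement that $E_{\mathbf{t}}/k$ is a subextension of $(E_{\mathbf{t'}})_{t_1}/k$.

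The main obstacle I anticipate is purely bookkeeping: making sure the various "copies" of the residue fields are consistently identified so that the maps genuinely compose, and checking that the maximal ideal of $C$ I pick really does lie over $\langle T_1 - t_1\rangle$ rather than some other prime of $k[T_1]$ — this is where I use that $\overline{\mathfrak{P}}$ (and hence $\mathfrak{M}$) contracts to $\langle T_1 - t_1\rangle$, which follows from tracking the containment $\mathfrak{q} \subseteq \langle T_1-t_1,\dots\rangle$ from the start of the argument. A secondary subtlety is that $B/\mathfrak{P}'$ need not itself be integrally closed, which is why the passage to $C$ is necessary before invoking the definition of $(E_{\mathbf{t'}})_{t_1}$; once that is set up, everything reduces to lying-over and the transitivity of quotient maps, with no real computation involved.
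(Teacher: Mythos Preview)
Your argument is correct and is essentially the same commutative-algebra chase as the paper's: compose the two reduction maps and verify that the residue fields line up. The only tactical difference is the order of construction: you fix the maximal ideal $\mathfrak{P}$ of $B$ first and invoke going-down (valid since $k[\mathbf{T}]$ is normal) to find $\mathfrak{P}'\subseteq\mathfrak{P}$ over $\mathfrak{q}$, whereas the paper starts from a maximal ideal $\mathfrak{P}_2$ of the localized integral closure $B_2$ of $k(T_1)[T_2,\dots,T_n]$ in $E$, builds the composite $\psi:B\to B_1/\mathfrak{P}_1$, and recovers the maximal ideal of $B$ as $\ker\psi$, thereby sidestepping going-down; one small point you leave implicit is why $\mathrm{Frac}(B/\mathfrak{P}')=E_{\mathbf{t'}}$, which follows since $\mathfrak{P}'\cap k[T_1]=\mathfrak{q}\cap k[T_1]=0$ so $\mathfrak{P}'$ extends to a maximal ideal of $B_2$.
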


\begin{proof}
Denote the integral closure of $k(T_1)[T_2, \dots, T_n]$ (resp., of $k[{\bf{T}}]$) in $E$ by $B_2$ (resp., by $B$). Clearly, $B \subseteq B_2$. For a maximal ideal $\mathfrak{P}_2$ of $B_2$ lying over $\langle T_2-t_2(T_1) , \dots, T_n-t_n(T_1) \rangle$ and $x \in B_2$, we denote the reduction of $x$ modulo $\mathfrak{P}_2$ by $\overline{x}$. Then the reduction $\overline{x}$ of any element $x$ of $B$ is integral over $k[T_1]$, i.e., is an element of the integral closure $B_1$ of $k[T_1]$ in $E_{\bf{t'}}= B_2/\mathfrak{P}_2$. Let $\mathfrak{P}_1$ be a maximal ideal of $B_1$ containing $T_1-t_1$. Composing the inclusion $B/(B \cap \mathfrak{P}_2) \subseteq B_1$ with the canonical projection $B_1 \rightarrow B_1/\mathfrak{P}_1$ yields a well-defined homomorphism $\psi : B \rightarrow B_1/\mathfrak{P}_1$ satisfying $\langle T_1 - t_1, T_2 - t_2(t_1), \dots, T_n-t_n(t_1) \rangle \subseteq {\rm{ker}}(\psi) \cap k[{\bf{T}}].$ Hence, $\langle T_1 - t_1, T_2 - t_2(t_1), \dots, T_n-t_n(t_1)  \rangle = {\rm{ker}}(\psi) \cap k[{\bf{T}}],$ as the ideal in the left-hand side is maximal and $k[{\bf{T}}] \not \subseteq {\rm{ker}}(\psi)$. Consequently, the ideal ${\rm{ker}}(\psi)$ of $B$ lies over $\langle T_1 - t_1, T_2 - t_2(t_1), \dots, T_n-t_n(t_1) \rangle$ and it is maximal. Hence, $E_{\bf{t}} = B/{\rm{ker}}(\psi) \subseteq B_1/\mathfrak{P}_1 = (E_{\bf{t'}})_{t_1}$.
\end{proof}

Our last lemma gives a situation where the inclusion of Lemma \ref{je_sais_plus} is actually an equality:

\begin{lemma} \label{lemma2}
Let $E/k({\bf{T}}) = E/k(T_1, \dots, T_n)$ be a finite Galois field extension, $t_1 \in k$, and ${\bf{t'}}=(t_2(T_1), \dots, t_n(T_1)) \in k[T_1]^{n-1}$. Suppose the following three conditions hold:

\vspace{0.5mm}

\noindent
$\bullet$ $\langle T_2-t_2(T_1) , \dots, T_n-t_n(T_1) \rangle$ is unramified in $E/k(T_1)(T_2, \dots, T_n)$, 

\vspace{0.5mm}

\noindent
$\bullet$ the specialization $E_{\bf{t'}}/k(T_1)$ of $E/k(T_1)(T_2, \dots, T_n)$ at ${\bf{t'}}$ is Galois, 

\vspace{0.5mm}

\noindent
$\bullet$ $t_1$ is outside some finite set $\mathcal{S}$ depending on $E/k({\bf{T}})$ and ${\bf{t'}}$. 

\vspace{0.5mm}

\noindent
Then the specialization $E_{\bf{t}}/k$ of $E/k({\bf{T}})$ at ${\bf{t}} = (t_1, t_2(t_1), \dots, t_n(t_1))$ equals the specialization $(E_{\bf{t'}})_{t_1}/k$ of $E_{\bf{t'}}/k(T_1)$ at $t_1$.
\end{lemma}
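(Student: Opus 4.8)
The plan is to establish the inclusion reverse to the one of Lemma~\ref{je_sais_plus} (which applies here, since $E_{{\bf{t'}}}/k(T_1)$ is assumed Galois), namely $(E_{{\bf{t'}}})_{t_1} \subseteq E_{{\bf{t}}}$. I will reuse the notation from the proof of Lemma~\ref{je_sais_plus}: let $B$ be the integral closure of $k[{\bf{T}}]$ in $E$, let $B_2$ be the integral closure of $k(T_1)[T_2, \dots, T_n]$ in $E$, let $\mathfrak{P}_2$ be a maximal ideal of $B_2$ lying over $\langle T_2 - t_2(T_1), \dots, T_n - t_n(T_1) \rangle$, let $B_1$ be the integral closure of $k[T_1]$ in $E_{{\bf{t'}}} = B_2/\mathfrak{P}_2$, and let $\mathfrak{P}_1$ be a maximal ideal of $B_1$ lying over $\langle T_1 - t_1 \rangle$. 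Write $\pi : B \to E_{{\bf{t'}}}$ for the restriction to $B$ of the reduction modulo $\mathfrak{P}_2$; as noted in the proof of Lemma~\ref{je_sais_plus}, $\pi(B)$ is integral over $k[T_1]$, hence $\pi(B) \subseteq B_1$, and the map $\psi : B \to B_1/\mathfrak{P}_1 = (E_{{\bf{t'}}})_{t_1}$ from that proof --- the composite of $\pi$ with reduction modulo $\mathfrak{P}_1$ --- satisfies $\psi(B) = E_{{\bf{t}}}$. In other words, $E_{{\bf{t}}}$ is the image of $\pi(B)$ in $(E_{{\bf{t'}}})_{t_1}$, so it suffices to show that, for all $t_1$ outside a suitable finite set $\mathcal{S}$, the subrings $\pi(B)$ and $B_1$ of $E_{{\bf{t'}}}$ have the same image in $B_1/\mathfrak{P}_1$.

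The essential observation is that $\pi(B)$, while possibly a proper subring of the integral closure $B_1$, agrees with it up to a bounded $k[T_1]$-denominator. First, $\pi(B)$ has fraction field $E_{{\bf{t'}}}$: since $k(T_1)[T_2, \dots, T_n] = (k[T_1] \setminus \{0\})^{-1} k[{\bf{T}}]$ and localization commutes with integral closure, $B_2 = (k[T_1] \setminus \{0\})^{-1} B$, so every element of $E_{{\bf{t'}}} = B_2/\mathfrak{P}_2$ has the form $\pi(b) \pi(s)^{-1}$ with $b \in B$ and $s \in k[T_1] \setminus \{0\}$, and $\pi(s) \in \pi(k[{\bf{T}}]) = k[T_1] \subseteq \pi(B)$ is a unit of $E_{{\bf{t'}}}$. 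Hence $\pi(B)$ contains a $k(T_1)$-basis $v_1, \dots, v_d$ of $E_{{\bf{t'}}}$; set $g := \Delta_{E_{{\bf{t'}}}/k(T_1)}(v_1, \dots, v_d) \in k[T_1]$, which is nonzero because $v_1, \dots, v_d$ is a basis and $E_{{\bf{t'}}}/k(T_1)$ is separable (being Galois). By the standard property of the discriminant over the integrally closed Noetherian domain $k[T_1]$, one has $g \cdot B_1 \subseteq k[T_1] v_1 \oplus \cdots \oplus k[T_1] v_d \subseteq \pi(B)$. One may alternatively produce such a basis $v_1, \dots, v_d$ --- and hence such a $g$ --- explicitly from the unramifiedness of $\langle T_2 - t_2(T_1), \dots, T_n - t_n(T_1) \rangle$ in $E/k(T_1)(T_2, \dots, T_n)$, by reducing modulo $\mathfrak{P}_2$ an integral $k({\bf{T}})$-basis of $E$ whose discriminant, evaluated at ${\bf{t'}}$, is a nonzero element of $k(T_1)$; but any $g$ as above suffices.

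Now let $\mathcal{S}$ be the finite set of roots of $g$ in $k$, and fix $t_1 \in k \setminus \mathcal{S}$. Since $T_1 \equiv t_1 \pmod{\mathfrak{P}_1}$, reduction modulo $\mathfrak{P}_1$ sends $g(T_1)$ to $g(t_1) \in k^*$, which is therefore invertible in the field $(E_{{\bf{t'}}})_{t_1}$. Given $b \in B_1$, we have $g(T_1) b \in \pi(B)$, so $g(t_1) \overline{b} = \overline{g(T_1) b}$ belongs to the image of $\pi(B)$ in $(E_{{\bf{t'}}})_{t_1}$, which is $E_{{\bf{t}}}$; multiplying by $g(t_1)^{-1} \in k \subseteq E_{{\bf{t}}}$ gives $\overline{b} \in E_{{\bf{t}}}$. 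Therefore $(E_{{\bf{t'}}})_{t_1} = \{\overline{b} : b \in B_1\} \subseteq E_{{\bf{t}}}$, which together with Lemma~\ref{je_sais_plus} yields the desired equality $E_{{\bf{t}}} = (E_{{\bf{t'}}})_{t_1}$.

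Apart from careful bookkeeping with the chain of integral closures --- all of which is contained in, or adjacent to, the proof of Lemma~\ref{je_sais_plus} --- and a routine reduction argument at the end, the one point requiring attention is the comparison of $\pi(B)$ with $B_1$: one must ensure that the difference between the ``naive'' reduction $\pi(B)$ of $B$ and the full integral closure of $k[T_1]$ in $E_{{\bf{t'}}}$ is killed by a single nonzero polynomial in $T_1$, so that it disappears after specializing $T_1$ to any value avoiding the finitely many roots of that polynomial.
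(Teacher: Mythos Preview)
Your argument is correct and follows the same overall strategy as the paper: establish the reverse inclusion $(E_{\bf{t'}})_{t_1}\subseteq E_{\bf{t}}$ by producing a $k(T_1)$-basis of $E_{\bf{t'}}$ consisting of reductions of elements of $B$, and then take the finite set $\mathcal{S}$ to be the zero locus of the discriminant of that basis. The paper obtains this basis explicitly from the unramifiedness hypothesis (starting from an integral $k({\bf T})$-basis $y_1,\dots,y_e$ of $E$ whose discriminant does not vanish at ${\bf t'}$, then clearing $T_1$-denominators), whereas you obtain it more abstractly from the localization identity $B_2=(k[T_1]\setminus\{0\})^{-1}B$, which already guarantees that $\pi(B)$ spans $E_{\bf{t'}}$ over $k(T_1)$. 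The upshot is that your version never invokes the first bullet point of the statement, so it in fact proves the lemma under the sole assumption that $E_{\bf{t'}}/k(T_1)$ is Galois; the paper's route, on the other hand, has the advantage (exploited in Remark~\ref{rem:finite}) of giving an explicit description of $\mathcal{S}$ in terms of the discriminant $\Delta(y_1,\dots,y_e)$ of the original integral basis, which is exactly what is needed in part~(c) of the proof of Lemma~\ref{prop:2}.
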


\begin{proof}
Denote the integral closure of $k(T_1)[T_2, \dots, T_n]$ (resp., of $k[{\bf{T}}]$) in $E$ by $B_2$ (resp., by $B$). For a maximal ideal $\mathfrak{P}_2$ of $B_2$ lying over $\langle T_2-t_2(T_1) , \dots, T_n-t_n(T_1) \rangle$ and $x \in B_2$, we denote the reduction of $x$ modulo $\mathfrak{P}_2$ by $\overline{x}$. As $\langle T_2-t_2(T_1) , \dots, T_n-t_n(T_1) \rangle$ is unramified in $E/k(T_1)(T_2, \dots, T_n)$, there exist $y_1, \dots, y_e \in B_2$ such that $E=k({\bf{T}}) y_1 \oplus \cdots \oplus k({\bf{T}}) y_e,$
$$\Delta(y_1, \dots, y_e) B_2 \subseteq k(T_1)[T_2, \dots, T_n] {y_1} \oplus \cdots \oplus k(T_1)[T_2, \dots, T_n] {y_e},$$
and such that the discriminant $\Delta(y_1, \dots, y_e) \in k(T_1)[T_2, \dots, T_n]$ of $y_1, \dots, y_e$ is not in the ideal $\langle T_2-t_2(T_1), \dots, T_n-t_n(T_1) \rangle$. In particular, we have
\begin{equation} \label{eqpoly2}
E_{\bf{t'}} = B_2/\mathfrak{P}_2 = k(T_1) \frac{\overline{y_1}}{\overline{\Delta(y_1, \dots, y_e)}} + \cdots + k(T_1) \frac{\overline{y_e}}{\overline{\Delta(y_1, \dots, y_e)}}.
\end{equation}
Pick $b(T_1) \in k[T_1] \setminus \{0\}$ such that $z_i=b(T_1) y_i$ is in $B$ for every $i \in \{1, \dots, e\}$. Then, from \eqref{eqpoly2}, we have $E_{\bf{t'}} = k(T_1) \overline{z_1} + \cdots +k(T_1) \overline{z_e}.$ Up to reordering, there exists $c \leq e$ such that 
\begin{equation} \label{eqpoly3}
E_{\bf{t'}} = k(T_1) \overline{z_1} \oplus \cdots \oplus k(T_1) \overline{z_c}.
\end{equation}
Let $\Delta(T_1) \in k[T_1] \setminus \{0\}$ be the discriminant of $\overline{z_1}, \dots, \overline{z_c}$ and let $\mathcal{S}$ be the set of all roots of $\Delta(T_1)$. For $t_1 \in k \setminus \mathcal{S}$ and a maximal ideal $\mathfrak{P}_1$ of the integral closure $B_1$ of $k[T_1]$ in $E_{\bf{t'}}$ containing $T_1-t_1$, \eqref{eqpoly3} and $t_1 \not \in \mathcal{S}$ yield that $(E_{\bf{t'}})_{t_1} = B_1/\mathfrak{P}_1$ is the field generated over $k$ by the reductions of $\overline{z_1}, \dots, \overline{z_c}$ modulo $\mathfrak{P}_1$. Since these reductions are necessarily in $E_{\bf{t}}$, we get $(E_{\bf{t'}})_{t_1} \subseteq E_{\bf{t}}$. Lemma \ref{je_sais_plus} then yields
$(E_{\bf{t'}})_{t_1} = E_{\bf{t}}$, thus concluding the proof.
\end{proof}

\begin{remark} \label{rem:finite}
Under the extra assumptions that $[E_{{\bf{t'}}}:k(T_1)] = [E:k({\bf{T}})]$ and $y_1, \dots, y_e$ are integral over $k[{\bf{T}}]$, the set $\mathcal{S}$ is reduced to the set of all roots of $\overline{\Delta(y_1, \dots, y_e)} \in k[T_1]$.
\end{remark}

\subsection{Proof of Theorem \ref{thm:intro_1}} \label{ssec:proof_1.3}

Since $\alpha$ splits, there exists an embedding $\alpha' : {\rm{Gal}}(L/k) \rightarrow G$ satisfying 
\begin{equation} \label{eqid}
\alpha \circ \alpha' = {\rm{id}}_{{\rm{Gal}}(L/k)}.
\end{equation} Set $G' = \alpha'({\rm{Gal}}(L/k)) \subseteq G$, and $A={\rm{ker}}(\alpha) \subseteq G$. Then
\begin{equation} \label{eqdefpsi}
\psi: \left \{ \begin{array} {ccc}
A \rtimes G' & \longrightarrow & G \\
(a, g') & \longmapsto & a \cdot g'
\end{array} \right. ,
\end{equation}
where $G'$ acts by conjugation on $A$, is an isomorphism. Let $c_1, \dots, c_n$ be a $k$-basis of $L$ and set ${\rm{Gal}}(L/k) = \{\sigma_1, \dots, \sigma_n\}$.

Let $T$ be an extra indeterminate, and let $F/k(T)$ be a Galois extension of group $A$ in which $\langle T \rangle$ is unramified and whose specialization $F_0/k$ at 0 equals $k/k$; such an extension exists by the twisting lemma (see \cite{Deb99a}). From the unramifiedness condition, there is a $k(T)$-basis $z_1, \dots, z_{|A|}$ of $F$, which is integral over $k[T]$ and such that $\Delta(0) \in k^*$, where $\Delta(T) \in k[T]$ denotes the discriminant of $z_1, \dots, z_{|A|}$. Then consider the Galois field extension $E/L(T) = FL/L(T)$. As $F \cap \overline{k}=k$, we have ${\rm{Gal}}(E/L(T)) = A$. Let $P(T,Y) \in L[T][Y]$ be the minimal polynomial of a primitive element of $E/L(T)$, assumed to be integral over $L[T]$. Given $i \in \{1, \dots, n\}$, let $U_i$ be a new indeterminate and $E^{(i)}$ the field generated over $L(U_i)$ by one root, say $y_i$, of $\sigma_i(P)(U_i,Y) \in L[U_i][Y]$, where $\sigma_i(P)(U_i,Y)$ is obtained from $P(U_i,Y)$ by applying $\sigma_i$ to the coefficients. We then let $M^{(i)}$ denote the field $E^{(i)}(U_1, \dots, U_{i-1}, U_{i+1}, \dots  U_n)$ and set $M = M^{(1)} \cdots M^{(n)}$. The extension $M/L({\bf{U}})= M/L(U_1, \dots, U_n)$ is Galois of group $A^n$. 

\begin{lemma} \label{lemma:fj1}
{\rm{(1)}} There is an $L({\bf{U}})$-basis of $M$ which is integral over $L[{\bf{U}}]$ and whose discri\-mi\-nant equals $(\Delta(U_1)$ $\cdots \Delta(U_n))^{|A|^{n-1}}$ for some $\Delta(T) \in k[T]$ with $\Delta(0) \in k^*$. In particular, $\langle U_1, \dots, U_n \rangle$ is unramified in $M/L({\bf{U}})$.

\vspace{1mm}

\noindent
{\rm{(2)}} The specialization $M_{(0, \dots, 0)}/L$ of $M/L({\bf{U}})$ at $(0, \dots, 0)$ equals $L/L$.
\end{lemma}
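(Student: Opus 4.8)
The plan is to write down an explicit $L({\bf{U}})$-basis of $M$ as a product of bases of the factors $M^{(1)}, \dots, M^{(n)}$, compute its discriminant by iterating the composite-basis formula of Lemma~\ref{lemma:discriminant}, and read off from this both the unramifiedness in (1) and the triviality of the specialization in (2). First I would fix bases of the factors. Since $F \cap \overline{k} = k$, the fields $F$ and $L(T)$ are linearly disjoint over $k(T)$, so ${\rm{Gal}}(E/L(T)) = A$ and $z_1, \dots, z_{|A|}$ is an $L(T)$-basis of $E = FL$ which is integral over $L[T]$ and, by linear disjointness (as in the proof of Lemma~\ref{lem:ram_lin_disj}(2)), still has discriminant $\Delta(T)$ in $E/L(T)$. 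Writing each $z_j$ as a polynomial over $L(T)$ in a fixed primitive element of $E/L(T)$ and applying $\sigma_i$ to the coefficients yields an $L$-semilinear field isomorphism $\Phi_i \colon E \to E^{(i)}$ with $\Phi_i|_L = \sigma_i$ and $\Phi_i(T) = U_i$; I would set $z_j^{(i)} = \Phi_i(z_j)$. Then $z_1^{(i)}, \dots, z_{|A|}^{(i)}$ is an $L(U_i)$-basis of $E^{(i)}$, integral over $k[U_i]$ (as $\Phi_i(k[T]) = k[U_i]$), with discriminant $\sigma_i(\Delta)(U_i) = \Delta(U_i)$ because $\Delta(T) \in k[T]$ and $\sigma_i$ fixes $k$; and adjoining the remaining indeterminates changes neither integrality nor the discriminant, so $z_1^{(i)}, \dots, z_{|A|}^{(i)}$ is an $L({\bf{U}})$-basis of $M^{(i)}$, integral over $L[{\bf{U}}]$, of discriminant $\Delta(U_i)$.

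Next I would assemble the composite basis. For $1 \le r \le n$ put $N_r = M^{(1)} \cdots M^{(r)}$ (so $N_0 = L({\bf{U}})$). Since ${\rm{Gal}}(M/L({\bf{U}})) = A^n$ with restriction to $M^{(i)}$ the $i$-th projection, the subgroups ${\rm{Gal}}(M/M^{(r)})$ and ${\rm{Gal}}(M/N_{r-1})$ together generate $A^n$, so $M^{(r)}$ and $N_{r-1}$ are linearly disjoint over $L({\bf{U}})$. Hence $\{z_j^{(r)}\}_j$ is an $N_{r-1}$-basis of $N_r$, again of discriminant $\Delta(U_r)$, and the family ${\mathcal B}_r$ of products $z_{j_1}^{(1)} \cdots z_{j_r}^{(r)}$ is an $L({\bf{U}})$-basis of $N_r$ integral over $L[{\bf{U}}]$. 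An induction on $r$ using Lemma~\ref{lemma:discriminant}, together with the fact that $\Delta(U_r) \in k[U_r] \subseteq L({\bf{U}})$ is fixed by ${\rm{Gal}}(N_{r-1}/L({\bf{U}}))$ (so that $N_{N_{r-1}/L({\bf{U}})}(\Delta(U_r)) = \Delta(U_r)^{|A|^{r-1}}$), gives $\Delta_{N_r/L({\bf{U}})}({\mathcal B}_r) = \bigl(\Delta(U_1) \cdots \Delta(U_r)\bigr)^{|A|^{r-1}}$. Taking $r = n$ settles the first assertion of (1), and since $\Delta(0) \in k^*$ this discriminant is a nonzero element of $k$ at ${\bf{U}} = (0, \dots, 0)$, which gives the unramifiedness statement.

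For (2), applying Lemma~\ref{compositum}(2) $n-1$ times to discard the auxiliary indeterminates shows that the specialization of $M^{(i)}/L({\bf{U}})$ at the origin equals that of $E^{(i)}/L(U_i)$ at $U_i = 0$; using the integral basis $z_1^{(i)}, \dots, z_{|A|}^{(i)}$ (whose discriminant is nonzero at $0$), the latter is generated over $L$ by the reductions of the $z_j^{(i)}$ modulo a maximal ideal over $\langle U_i \rangle$. Transporting through $\Phi_i$, these reductions are the $\sigma_i$-images of the reductions of the $z_j$, which lie in $k$ because $F_0/k = k/k$; as $\sigma_i$ fixes $k$ they again lie in $k$, so $(M^{(i)})_{(0, \dots, 0)} = L$ for each $i$. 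Finally, since each $N_{r-1}$ is unramified at $\langle U_1, \dots, U_n \rangle$ by (1) and $N_{r-1}, M^{(r)}$ are linearly disjoint over $L({\bf{U}})$, repeated use of Lemma~\ref{compositum}(1) gives $M_{(0, \dots, 0)} = (M^{(1)})_{(0, \dots, 0)} \cdots (M^{(n)})_{(0, \dots, 0)} = L$, which is (2).

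The step I expect to require the most care is the discriminant bookkeeping in the second paragraph: one must be sure that ${\mathcal B}_r$ genuinely is a basis — which amounts to the joint linear disjointness of the factors $M^{(i)}$, encoded precisely in ${\rm{Gal}}(M/L({\bf{U}})) = A^n$ — and that the norm $N_{N_{r-1}/L({\bf{U}})}(\Delta(U_r))$ collapses to a pure power of $\Delta(U_r)$ rather than bringing in the other variables; this is exactly where it matters that $\Delta(T)$ was chosen in $k[T]$, hence already lies in the base field $L({\bf{U}})$. In part (2) the only delicate point is that $E^{(i)}$ is merely $\sigma_i$-semilinearly isomorphic to $E$, but since $\sigma_i$ is an automorphism of $L$ this leaves the conclusion untouched.
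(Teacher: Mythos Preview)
Your proof is correct and follows essentially the same route as the paper's: both build the explicit product basis $\{z_{j_1}^{(1)}\cdots z_{j_n}^{(n)}\}$ (what the paper writes as $\{\sigma_1(z_{i_1})\cdots\sigma_n(z_{i_n})\}$), compute its discriminant via Lemma~\ref{lemma:discriminant}, and then use Lemma~\ref{compositum} parts (1) and (2) to identify the specialization at the origin. Your semilinear isomorphism $\Phi_i$ makes precise what the paper's notation $\sigma_i(z_j)$ means, and your inductive discriminant bookkeeping spells out what the paper summarizes as ``by the construction and Lemma~\ref{lemma:discriminant}''; the only cosmetic difference is that the paper first invokes Lemma~\ref{lem:ram_lin_disj} to record unramifiedness of the factors before writing down the explicit discriminant, whereas you read unramifiedness off directly from the computed discriminant.
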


\begin{proof}
We work with the $k(T)$-basis $z_1, \dots, z_{|A|}$ of $F$ from the above. Since $F \cap \overline{k}=k$, the elements $z_1, \dots, z_{|A|}$ yield an $L(T)$-basis of $E$ which is integral over $L[T]$ and whose discriminant equals $\Delta(T)$. In particular, $\langle T \rangle$ is unramified in $E/L(T)$. By Lemma \ref{compositum}(1), the specialization $E_0/L$ of $E/L(T)$ at 0 is $L/L$. Next, let $i=1, \dots, n$. The elements $\sigma_i(z_1), \dots, \sigma_i(z_{|A|})$ of $E^{(i)}$ are an $L(U_i)$-basis of $E^{(i)}$ which is integral over $L[U_i]$. Its discriminant $\Delta_i(U_i)$ is obtained from $\Delta(T)$ by replacing $T$ by $U_i$ and by applying $\sigma_i$ to the coefficients. But, since $\Delta(T) \in k[T]$, we have $\Delta_i(U_i) = \Delta(U_i)$. In particular, $\Delta_i(0) \not= 0$ and $\langle U_i \rangle$ is unramified in $E^{(i)}/L(U_i)$. Moreover, the specialization $(E^{(i)})_0/L$ of $E^{(i)}/L(U_i)$ at 0 equals $L/L$. Lemma \ref{lem:ram_lin_disj}(2) then shows that $\langle U_1, \dots, U_n \rangle$ is unramified in $M^{(i)}/L({\bf{U}})$. Moreover, by Lemma \ref{compositum}(2), the specialization $(M^{(i)})_{(0, \dots, 0)}/L$ of $M^{(i)}/L({\bf{U}})$ at $(0, \dots, 0)$ equals $L/L$. Finally, by Lemma \ref{lem:ram_lin_disj}(1), $\langle U_1, \dots, U_n \rangle$ is unramified in $M/L({\bf{U}})$. Then the $L({\bf{U}})$-basis $\{\sigma_1(z_{i_1}) \cdots \sigma_n(z_{i_n})\}_{1 \leq i_1, \dots, i_n \leq |A|}$ of $M$ is integral over $L[{\bf{U}}]$. By the cons\-truction and Lemma \ref{lemma:discriminant}, its discriminant equals $(\Delta(U_1) \cdots \Delta(U_n))^{|A|^{n-1}}$, thus proving (1). As to (2), as $\langle U_1, \dots, U_n \rangle$ is unramified in $M^{(i)}/L({\bf{U}})$ for each $i \in \{1, \dots, n\}$, Lemma \ref{compositum}(1) yields $M_{(0,\dots,0)}=(M^{(1)})_{(0, \dots, 0)} \cdots (M^{(n)})_{(0, \dots, 0)}$, i.e.,  $M_{(0,\dots,0)}=L$. This completes the proof.
\end{proof}

Now, there is a unique tuple ${\bf{T}}=(T_1, \dots, T_n)$ of indeterminates fulfilling:
\begin{equation} \label{eqsystem}
\left \{\begin{array} {ccc}
\sigma_1(c_1) T_1 + \sigma_1(c_2) T_2 + \cdots + \sigma_1(c_n)T_n & = & U_1 \\
\dots & =& \dots \\
\sigma_n(c_1) T_1 + \sigma_n(c_2) T_2 + \cdots + \sigma_n(c_n)T_n & = & U_n 
\end{array} \right..
\end{equation}

\begin{lemma} \label{lemma:fj2}
{\rm{(1)}} The extension $M/k({\bf{T}})$ is Galois.

\vspace{0.5mm}

\noindent
{\rm{(2)}} The specialization of $M/k({\bf{T}})$ at $(0, \dots, 0)$ equals $L/k$.

\vspace{0.5mm}

\noindent
{\rm{(3)}} The maximal ideal $\langle T_1, \dots, T_n \rangle$ of $k[{\bf{T}}]$ is unramified in $M/k({\bf{T}})$.
\end{lemma}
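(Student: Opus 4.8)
The plan rests on the observation that the change of variables \eqref{eqsystem} is an invertible $L$-linear substitution: the matrix $C = (\sigma_i(c_j))_{1 \leq i, j \leq n} \in M_n(L)$ has $\det(C)^2 = \Delta_{L/k}(c_1, \dots, c_n) \neq 0$, so ${\bf{T}} = C^{-1} {\bf{U}}$. Hence $L[{\bf{T}}] = L[{\bf{U}}]$, $L({\bf{T}}) = L({\bf{U}})$, $\langle T_1, \dots, T_n \rangle L[{\bf{T}}] = \langle U_1, \dots, U_n \rangle L[{\bf{U}}]$, and $[L({\bf{U}}):k({\bf{T}})] = [L:k] = n$; moreover $M/k({\bf{T}})$ is separable, being a tower of the separable extensions $M/L({\bf{U}})$ and $L({\bf{U}})/k({\bf{T}})$, so $[M:k({\bf{T}})] = n\,|A|^n$. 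I would prove (1), (2), (3) in that order.

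For (1), the idea is to show that each $\sigma \in {\rm{Gal}}(L/k)$ extends to a $k({\bf{T}})$-automorphism of $M$. Write ${\rm{Gal}}(L/k) = \{\sigma_1, \dots, \sigma_n\}$, and for $\sigma \in {\rm{Gal}}(L/k)$ let $\tau_\sigma \in S_n$ be the permutation with $\sigma \sigma_i = \sigma_{\tau_\sigma(i)}$; then $\sigma \mapsto \tau_\sigma$ is the left regular representation, hence a homomorphism. Let $\widetilde{\sigma}$ be the automorphism of $L[{\bf{U}}]$ acting as $\sigma$ on $L$ and sending $U_i$ to $U_{\tau_\sigma(i)}$, extended to $L({\bf{U}})$. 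Applying $\widetilde{\sigma}$ to the $i$-th equation of \eqref{eqsystem} and using $\sigma(\sigma_i(c_j)) = \sigma_{\tau_\sigma(i)}(c_j)$ shows that $(\widetilde{\sigma}(T_1), \dots, \widetilde{\sigma}(T_n))$ is again a solution of \eqref{eqsystem}, so by uniqueness $\widetilde{\sigma}$ fixes each $T_j$ and hence $k({\bf{T}})$. Next, $\widetilde{\sigma}$ carries the polynomial $\sigma_i(P)(U_i, Y) \in L[U_i][Y]$ to $\sigma_{\tau_\sigma(i)}(P)(U_{\tau_\sigma(i)}, Y)$, which is exactly the polynomial defining $E^{(\tau_\sigma(i))}$; since $M^{(i)}/L({\bf{U}})$ is Galois (it is the base change, along the regular extension $L(U_i) \subseteq L({\bf{U}})$, of the $\sigma_i$-conjugate $E^{(i)}/L(U_i)$ of $E/L(T)$), the field $M^{(i)}$ is the splitting field of $\sigma_i(P)(U_i, Y)$ over $L({\bf{U}})$, whence $\widetilde{\sigma}(M^{(i)}) = M^{(\tau_\sigma(i))}$ and therefore $\widetilde{\sigma}(M) = M$. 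Thus $\widetilde{\sigma}$ restricts to a $k({\bf{T}})$-automorphism of $M$. Finally, the subgroup $H$ of ${\rm{Aut}}(M/k({\bf{T}}))$ generated by ${\rm{Gal}}(M/L({\bf{U}})) \cong A^n$ and by these $\widetilde{\sigma}$ surjects, via restriction to $L({\bf{U}})$, onto the order-$n$ group $\{\widetilde{\sigma}|_{L({\bf{U}})} : \sigma \in {\rm{Gal}}(L/k)\}$, with kernel ${\rm{Gal}}(M/L({\bf{U}}))$, so $|H| \geq n\,|A|^n = [M:k({\bf{T}})]$; since $M/k({\bf{T}})$ is separable, $M/k({\bf{T}})$ is Galois (with group the wreath product $A \wr {\rm{Gal}}(L/k) = A^n \rtimes {\rm{Gal}}(L/k)$).

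Granting (1), parts (2) and (3) follow from Lemma \ref{lemma:fj1} and the identifications above. For (2): as $L[{\bf{T}}] = L[{\bf{U}}]$ is integrally closed, integral over $k[{\bf{T}}]$, and has fraction field $L({\bf{T}})$, the integral closure $B$ of $k[{\bf{T}}]$ in $M$ is also the integral closure of $L[{\bf{U}}]$ in $M$; a maximal ideal $\mathfrak{P}$ of $B$ lying over $\langle T_1, \dots, T_n \rangle k[{\bf{T}}]$ contracts in $L[{\bf{U}}]$ to the unique prime above it, namely $\langle U_1, \dots, U_n \rangle L[{\bf{U}}]$, so $B/\mathfrak{P}$ is a specialization of $M/L({\bf{U}})$ at $(0, \dots, 0)$, which is $L$ by Lemma \ref{lemma:fj1}(2); since the reduction $B \to B/\mathfrak{P}$ is the identity on $L \subseteq B$, the specialization of $M/k({\bf{T}})$ at $(0, \dots, 0)$ is $L/k$. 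For (3): apply Lemma \ref{lemma:discriminant} to the tower $k({\bf{T}}) \subseteq L({\bf{T}}) = L({\bf{U}}) \subseteq M$ with the $k$-basis $c_1, \dots, c_n$ of $L$ (integral over $k[{\bf{T}}]$, discriminant $\Delta_{L/k}(c_1, \dots, c_n) \in k^*$) and the integral $L({\bf{U}})$-basis of $M$ from Lemma \ref{lemma:fj1}(1), whose discriminant is $x := (\Delta(U_1) \cdots \Delta(U_n))^{|A|^{n-1}} \in k[U_1, \dots, U_n]$ with $\Delta(0) \in k^*$; this gives an integral $k({\bf{T}})$-basis of $M$ with discriminant $\Delta_{L/k}(c_1, \dots, c_n)^{|A|^n} \cdot N_{L({\bf{T}})/k({\bf{T}})}(x)$. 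Since each $\Delta(U_i)$ lies in $k[U_i]$ and each $\widetilde{\sigma}$ merely permutes the $U_i$, the element $x$ is fixed by ${\rm{Gal}}(L({\bf{U}})/k({\bf{T}})) = \{\widetilde{\sigma}|_{L({\bf{U}})}\}$, so $x \in k({\bf{T}}) \cap L[{\bf{T}}] = k[{\bf{T}}]$ and $N_{L({\bf{T}})/k({\bf{T}})}(x) = x^n$; evaluating at ${\bf{T}} = (0, \dots, 0)$, equivalently at ${\bf{U}} = (0, \dots, 0)$, yields $\Delta_{L/k}(c_1, \dots, c_n)^{|A|^n} \cdot \Delta(0)^{n^2 |A|^{n-1}} \in k^*$, so this discriminant lies in $k[{\bf{T}}] \setminus \langle T_1, \dots, T_n \rangle k[{\bf{T}}]$, i.e., $\langle T_1, \dots, T_n \rangle$ is unramified in $M/k({\bf{T}})$.

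The step I expect to be the main obstacle is (1) — specifically, checking that the permutation $U_i \mapsto U_{\tau_\sigma(i)}$ of ${\rm{Gal}}(L/k)$ simultaneously fixes every $T_j$ and permutes the fields $M^{(i)}$ (equivalently, the twisted polynomials $\sigma_i(P)(U_i, Y)$) among themselves. This is exactly what the change of variables \eqref{eqsystem} was designed to make possible; it is the classical construction, going back to Ikeda, that realizes the wreath product $A \wr {\rm{Gal}}(L/k)$ over $k({\bf{T}})$, here arranged so as to keep control of both the specialization at the origin and the ramification there. Parts (2) and (3) are then routine, relying on Lemma \ref{lemma:fj1} and on the fact that \eqref{eqsystem} is $L$-rational and sends $\langle {\bf{T}} \rangle$ to $\langle {\bf{U}} \rangle$.
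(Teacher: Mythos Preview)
Your proof is correct and follows essentially the same approach as the paper's: for (1) you both extend each $\sigma\in{\rm Gal}(L/k)$ to a $k({\bf T})$-automorphism of $M$ by permuting the $U_i$ via the regular representation and checking that the system \eqref{eqsystem} forces the $T_j$ to be fixed (the paper specifies the extension on the generators $y_j$ directly, whereas you argue via splitting fields, which is equivalent); for (2) and (3) you both transport Lemma~\ref{lemma:fj1} through the identification $L[{\bf T}]=L[{\bf U}]$, $\langle{\bf T}\rangle=\langle{\bf U}\rangle$, and combine with the $k$-basis $c_1,\dots,c_n$ of $L$ via Lemma~\ref{lemma:discriminant}. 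Your additional remark that $x\in k({\bf T})$ forces $N_{L({\bf T})/k({\bf T})}(x)=x^n$ is a harmless explicit computation the paper leaves implicit.
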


\begin{proof}
(1) The extensions $M/L({\bf{T}})$ and $L({\bf{T}})/k({\bf{T}})$ are Galois. Given $i \in \{1, \dots, n\}$, it then suffices to find $\tau_i \in {\rm{Aut}}(M/k({\bf{T}}))$ such that the restriction of $\tau_i$ to $L$ is $\sigma_i$. To that end, note that the multiplication on the left by $\sigma_i$ yields a pemutation $\epsilon_{i}$ of $\{1, \dots, n\}$. Then consider the automorphism $\tau_i : M \rightarrow M$ defined by $\tau_i(U_j)=U_{\epsilon_{i}(j)}$ and $\tau_i(y_j)= y_{\epsilon_{i}(j)}$ for $j \in \{1, \dots, n\}$, and $\tau_i|{_{ L}} = \sigma_i$. The automorphism $\tau_i$ permutes the equations of \eqref{eqsystem}, i.e., $(\tau_i(T_1), \dots, \tau_i(T_n))$ is a solution to \eqref{eqsystem}. As the solution to \eqref{eqsystem} is unique, we have $\tau_i(T_i) = T_i$ for every $i \in \{1, \dots, n\}$. Hence, $\tau_i$ is in ${\rm{Aut}}(M/k({\bf{T}}))$ and extends $\sigma_i$.

\vspace{1.5mm}

\noindent
(2) By the definition of ${\bf{T}}$, the integral closure of $L[{\bf{U}}]$ in $M$ equals the integral closure of $L[{\bf{T}}]$ in $M$, and a maximal ideal of the latter integral closure contains $T_1, \dots, T_n$ if and only if it contains $U_1, \dots, U_n$. The claim then follows from Lemma \ref{lemma:fj1}(2).

\vspace{1.5mm}

\noindent
(3) By Lemma \ref{lemma:fj1}(1), there is an integral $L({\bf{U}})$-basis $b_1, \dots, b_{|A|^n}$ of $M$ whose discrimi\-nant is $(\Delta(U_1) \cdots \Delta(U_n))^{|A|^{n-1}}$ for some $\Delta(T) \in k[T]$ with $\Delta(0) \in k^*$. Clearly, $b_1, \dots, b_{|A|^n}$ yield an integral $L({\bf{T}})$-basis of $M$ whose discriminant $\widetilde{\Delta}({\bf{T}})$, as an element of $L[{\bf{T}}]$, equals 
$$\widetilde{\Delta}({\bf{T}})=\prod_{i=1}^n (\Delta(\sigma_i(c_1) T_1  + \cdots  + \sigma_i(c_n) T_n))^{|A|^{n-1}}.$$
As applying any element of ${\rm{Gal}}(L({\bf{T}})/k({\bf{T}}))$ to $\widetilde{\Delta}({\bf{T}})$ permutes the factors, we have $\widetilde{\Delta}({\bf{T}}) \in k[{\bf{T}}]$, and $\widetilde{\Delta}({\bf{T}})$ does not vanish at $(0, \dots, 0)$. Moreover, $c_1, \dots, c_n$ yield a $k({\bf{T}})$-basis of $L({\bf{T}})$, which is integral over $k[{\bf{T}}]$ and whose discriminant, which is in $k^*$, cannot be in $\langle T_1, \dots, T_n \rangle$. Lemma \ref{lemma:discriminant} then shows that the discriminant of the integral $k({\bf{T}})$-basis $\{b_ic_j\}_{1 \leq i \leq {|A|^n}, 1 \leq j \leq n}$ of $M$ is not in $\langle T_1, \dots, T_n \rangle$. Hence, $\langle T_1, \dots, T_n \rangle$ is unramified in $M/k({\bf{T}})$.
\end{proof}

Given groups $G_1$ and $G_2$, recall that ${G_1}^{G_2}$ denotes the group of all functions $f : G_2 \rightarrow G_1$, and that $G_2$ acts on ${G_1}^{G_2}$ by $f^{\tau}(\sigma) = f(\tau \sigma)$ ($\tau \in G_2, \sigma \in G_2$). The corresponding semidirect product will be denoted by ${G_1}^{G_2} \rtimes G_2$, and the projection map ${G_1}^{G_2} \rtimes G_2 \rightarrow G_2$ by ${\rm{pr}}$.

The first part of the following lemma is a special case of \cite[Remarks 13.7.6 and 13.7.7]{FJ08}. As to the second part, it is \cite[Lemma 16.4.3]{FJ08}.

\begin{lemma} \label{lemma:fj3}
{\rm{(1)}} There exists an isomorphism $\phi : {\rm{Gal}}(M/k({\bf{T}})) \rightarrow A^{G'} \rtimes G'$ satisfying 
\begin{equation} \label{eqlemma}
{\rm{pr}} \circ \phi = \alpha' \circ {\rm{res}}^{M/k({\bf{T}})}_{L/k}.
\end{equation}

\noindent
{\rm{(2)}} The following map is an epimorphism:
\begin{equation} \label{eqlemma2}
\varphi : \left \{ \begin{array} {ccc}
A^{G'} \rtimes G' & \longrightarrow & A \rtimes G'\\
(f, \alpha'(\sigma)) & \longmapsto &  (f(\alpha' (\sigma_1))^{\alpha'(\sigma_1)^{-1}} \cdots f(\alpha'(\sigma_n))^{\alpha'(\sigma_n)^{-1}} , \alpha'(\sigma))
\end{array}. \right. 
\end{equation}
\end{lemma}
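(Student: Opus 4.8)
The plan is to prove Lemma~\ref{lemma:fj3} by assembling the two statements from their respective sources and then checking that, in our situation, the hypotheses of those results are met.

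For part (1), the key observation is that we have realized $M/L(\mathbf{U})$ as the Galois extension of group $A^n$ built from $n$ ``conjugate copies'' $M^{(i)}$ of $E/L(U_i)$, where $E = FL$ comes from the twisting lemma, and that the indeterminate change \eqref{eqsystem} makes $M/k(\mathbf{T})$ Galois with the elements $\tau_i$ constructed in Lemma~\ref{lemma:fj2}(1) lifting the $\sigma_i$ and permuting the $M^{(i)}$ exactly according to the left-multiplication action of ${\rm{Gal}}(L/k)$ on itself. This is precisely the setup of the wreath-product construction in \cite[Remarks 13.7.6 and 13.7.7]{FJ08}: identifying ${\rm{Gal}}(L/k)=\{\sigma_1,\dots,\sigma_n\}$ with $G'$ via the splitting $\alpha'$, one gets a canonical isomorphism $\phi : {\rm{Gal}}(M/k(\mathbf{T})) \to A^{G'} \rtimes G'$ under which the projection ${\rm{pr}}$ onto $G'$ corresponds to the restriction to $L$ followed by $\alpha'$. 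So I would first recall that the permutation action of $\tau_i$ on the index set $\{1,\dots,n\}\cong G'$ is the regular action, invoke the cited remarks to produce $\phi$, and then verify \eqref{eqlemma} by unwinding the definition of $\phi$: an element of ${\rm{Gal}}(M/k(\mathbf{T}))$ restricting to $\sigma \in {\rm{Gal}}(L/k)$ is sent by $\phi$ to a pair whose second coordinate is $\alpha'(\sigma)$, which is exactly $\alpha'\circ{\rm{res}}^{M/k(\mathbf{T})}_{L/k}$ evaluated at that element.

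For part (2), the map $\varphi$ in \eqref{eqlemma2} is literally the map appearing in \cite[Lemma 16.4.3]{FJ08}, which asserts it is a group epimorphism; here $A$ is abelian (this is where ${\rm{ker}}(\alpha)$ abelian is used, though for the epimorphism statement alone one only needs the $G'$-module structure on $A$), $G'$ acts on $A$ by conjugation inside $G$, and $\sigma_1,\dots,\sigma_n$ enumerate a set mapping bijectively to $G'$ under $\alpha'$. I would simply cite that lemma, noting that its hypotheses — $A$ a $G'$-group, $\{\alpha'(\sigma_1),\dots,\alpha'(\sigma_n)\}$ a transversal (in fact all) of $G'$ — hold verbatim, and that surjectivity follows because restricting $f$ to be supported appropriately already hits every element of $A$ while the second coordinate is clearly onto. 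Since both parts are essentially quotations of \cite{FJ08} with hypotheses that have been arranged to match, the proof is short.

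The main obstacle is not any deep argument but the bookkeeping needed to see that our construction genuinely instantiates the abstract setup of \cite[\S13.7]{FJ08}: one must be careful that the twisting-lemma extension $F/k(T)$ with trivial specialization at $0$, the conjugated polynomials $\sigma_i(P)$, and the linear change of variables \eqref{eqsystem} together produce exactly a wreath-product-type extension, with the Galois action on the roots $y_i$ matching the action used to define the semidirect product $A^{G'}\rtimes G'$ and with ${\rm{pr}}$ corresponding to restriction-then-$\alpha'$. I expect to spend most of the write-up making that identification precise — in particular checking the compatibility \eqref{eqlemma} of $\phi$ with ${\rm{pr}}$ — after which part (2) is an immediate citation.
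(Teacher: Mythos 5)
Your proposal is correct and coincides with the paper's treatment: the paper gives no independent proof of this lemma, stating only that part (1) is a special case of \cite[Remarks 13.7.6 and 13.7.7]{FJ08} and that part (2) is \cite[Lemma 16.4.3]{FJ08}, which is exactly the citation-plus-hypothesis-check you propose. Your observation that the substantive work is verifying that the construction of $M$ (the conjugate extensions $M^{(i)}$, the change of variables \eqref{eqsystem}, and the automorphisms $\tau_i$) instantiates the wreath-product setup of \cite[\S 13.7]{FJ08} matches how the paper arranges matters in Lemmas \ref{lemma:fj1} and \ref{lemma:fj2} before invoking those citations.
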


Let us next consider $N=M^{{\rm{ker}}(\psi \circ \varphi \circ \phi)},$ where $\psi$, $\phi$, and $\varphi$ are defined in \eqref{eqdefpsi}, \eqref{eqlemma}, and \eqref{eqlemma2}, respectively. By Lemma \ref{lemma:fj3}(1), ${\rm{Gal}}(M/L({\bf{T}})) = \phi^{-1}(A^{G'} \rtimes \{1\})$. Hence, $L \subseteq N$ by the definition of $\varphi$. Moreover, by Lemma \ref{lemma:fj2}(2), the specialization $N_{(0, \dots, 0)}/k$ of $N/k({\bf{T}})$ at $(0, \dots, 0)$ equals $L/k$. Finally, there is an isomorphism $\beta : {\rm{Gal}}(N/k({\bf{T}})) \rightarrow G$ satisfying
\begin{equation} \label{eqbetafin}
\beta \circ {\rm{res}}^{M/k({\bf{T}})}_{N/k({\bf{T}})} = \psi \circ \varphi \circ \phi.
\end{equation}
We then have $\alpha \circ \beta={\rm{res}}^{N/k({\bf{T}})}_{L/k}$. Indeed, given $\sigma \in {\rm{Gal}}(N/k({\bf{T}}))$, let $\tau \in {\rm{Gal}}(M/k({\bf{T}}))$ be such that ${\rm{res}}^{M/k({\bf{T}})}_{N/k({\bf{T}})}(\tau) = \sigma$. Since \eqref{eqlemma2} gives $\psi \circ \varphi (A^{G'} \times \{1\}) \subseteq A$ ($= {\rm{ker}}(\alpha)$), we have:
$$\alpha \circ \beta(\sigma) \stackrel{(\ref{eqbetafin})}{=} \alpha \circ \psi \circ \varphi \circ \phi(\tau) \stackrel{(\ref{eqlemma2})}{=}  \alpha \circ \psi (1, {\rm{pr}} \circ \phi(\tau)) \stackrel{(\ref{eqlemma})}{=} \alpha \circ \psi (1, \alpha' \circ {\rm{res}}^{M/k({\bf{T}})}_{L/k}(\tau))$$
$$\hspace{8.5cm} \stackrel{(\ref{eqdefpsi})}{=} \alpha \circ \alpha' \circ {\rm{res}}^{M/k( {\bf{T}})}_{L/k}(\tau)$$
$$\hspace{7.1cm} \stackrel{(\ref{eqid})}{=} {\rm{res}}^{M/k({\bf{T}})}_{L/k}(\tau)$$
$$ \hspace{7.1cm}  ={\rm{res}}^{N/k({\bf{T}})}_{L/k}(\sigma).$$

\begin{lemma} \label{prop:2}
Let $k$ be a Hilbertian field, $\mathcal{S}$ a finite set of valuations of $k$, and $\alpha : G \rightarrow {\rm{Gal}}(L/k)$ a finite embedding problem over $k$. Assume there are a tuple ${\bf{T}} = (T_1, \dots, T_n)$ of indeterminates and a solution $\beta: {\rm{Gal}}(N/k({\bf{T}})) \rightarrow G$ to $\alpha_{k({\bf{T}})}$ such that the ideal $\langle T_1, \dots, T_n \rangle$ is unra\-mi\-fied in $N/k({\bf{T}})$. Then the specialization $N_{(0, \dots, 0)}/k$ of $N/k({\bf{T}})$ at $(0, \dots, 0)$ is Galois and there is a solution ${\rm{Gal}}(F/k) \rightarrow G$ to $\alpha$ such that $F \cdot k_v^h = N_{(0, \dots,0)} \cdot k_v^h$ for every $v \in \mathcal{S}$.
\end{lemma}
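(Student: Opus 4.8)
The plan is to realise $F$ as a specialisation $N_{\bf t}$ of $N/k({\bf T})$ at a point ${\bf t}\in k^n$ chosen to meet two competing requirements: ${\bf t}$ should lie in a suitable Hilbert subset of $k^n$, so that $N_{\bf t}/k$ is Galois of degree $[N:k({\bf T})]=|G|$ and hence, by Lemma \ref{lemma:spec_ffe}, the specialisation $\beta_{\bf t}\colon{\rm{Gal}}(N_{\bf t}/k)\to G$ is a solution to $\alpha$; and ${\bf t}$ should be $v$-adically close to $(0,\dots,0)$ for every $v\in\mathcal S$, so that $N_{\bf t}\cdot k_v^h=N_{(0,\dots,0)}\cdot k_v^h$. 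I would first record that $N_{(0,\dots,0)}/k$ is Galois: unramifiedness of $\langle T_1,\dots,T_n\rangle$ in $N/k({\bf T})$ furnishes a $k[{\bf T}]$-integral $k({\bf T})$-basis $y_1,\dots,y_d$ of $N$ whose discriminant $\Delta({\bf T})\in k[{\bf T}]$ does not vanish at $(0,\dots,0)$, so the reductions of the $y_i$ modulo a maximal ideal over $\langle T_1,\dots,T_n\rangle$ form a $k$-basis of $N_{(0,\dots,0)}$ with nonzero discriminant, whence $N_{(0,\dots,0)}/k$ is separable; being also normal (see \S\ref{ssec:bas_1}), it is Galois. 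Note that in general $[N_{(0,\dots,0)}:k]<|G|$ (the ideal $\langle T_1,\dots,T_n\rangle$ may split), so $N_{(0,\dots,0)}$ itself is not a solution to $\alpha$ and the specialisation point has to be moved genuinely.

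The next step is to reduce to one variable, using the lemmas of \S\ref{ssec:proof_1.2}. Since $k$ is Hilbertian, I would choose $s_2,\dots,s_n\in k$ such that the specialisation $N':=N_{(s_2T_1,\dots,s_nT_1)}/k(T_1)$ of $N/k(T_1)(T_2,\dots,T_n)$ at $(s_2T_1,\dots,s_nT_1)\in k[T_1]^{n-1}$ is Galois of degree $|G|$; then $\beta':=\beta_{(s_2T_1,\dots,s_nT_1)}$ is a solution to $\alpha_{k(T_1)}$ by Lemma \ref{lemma:spec_ffe}. Because the polynomials $s_jT_1$ vanish at $0$, the ideal $\langle T_2-s_2T_1,\dots,T_n-s_nT_1\rangle$ is unramified in $N/k(T_1)(T_2,\dots,T_n)$ — the basis $y_1,\dots,y_d$ reduces modulo it to a generating family whose discriminant is $\Delta(T_1,s_2T_1,\dots,s_nT_1)\in k[T_1]$, a polynomial with nonzero constant term $\Delta(0,\dots,0)$ — and, combining Lemma \ref{lemma2} with Remark \ref{rem:finite}, for every $t_1\in k$ outside the finite set of roots of that polynomial — a set not containing $0$ — the specialisation of $N/k({\bf T})$ at ${\bf t}=(t_1,s_2t_1,\dots,s_nt_1)$ equals $(N')_{t_1}/k$, and unwinding the definitions gives $\beta_{\bf t}=(\beta')_{t_1}$. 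One checks in the same way that $\langle T_1\rangle$ is unramified in $N'/k(T_1)$ and that $(N')_0=N_{(0,\dots,0)}$.

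The local step is the heart of the matter. Let $\theta$ be a primitive element of $N'/k(T_1)$, integral over $k[T_1]$, with minimal polynomial $Q(T_1,Y)\in k[T_1][Y]$, chosen so that $Q(0,Y)$ is separable (possible since $N_{(0,\dots,0)}/k$ is separable and $\theta$ may be taken generic) and so that the reduction $\bar\theta_{t_1}$ of $\theta$ at $T_1=t_1$ generates $N'_{t_1}/k$ for $t_1$ in a Hilbert subset. Since $N_{(0,\dots,0)}/k$ is Galois and every root of $Q(0,Y)$ is a conjugate of such a reduction, $N_{(0,\dots,0)}$ is the splitting field of $Q(0,Y)$ over $k$, so $N_{(0,\dots,0)}\cdot k_v^h$ is the splitting field of $Q(0,Y)$ over $k_v^h$ for each $v\in\mathcal S$. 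Now fix $v\in\mathcal S$. As $k_v^h$ is Henselian and $Q(0,Y)$ is separable, a Krasner-type argument over $k_v^h$ produces a value $\gamma_v$ in the value group of $v$ such that, whenever $t_1\in k$ satisfies $v(t_1)\geq\gamma_v$, every root in $\overline{k}$ of $Q(t_1,Y)$ lies in the splitting field of $Q(0,Y)$ over $k_v^h$, i.e. in $N_{(0,\dots,0)}\cdot k_v^h$. Combined with $N_{(0,\dots,0)}\subseteq N'_{t_1}=k(\bar\theta_{t_1})$ and $\bar\theta_{t_1}$ being a root of $Q(t_1,Y)$, this gives $N_{(0,\dots,0)}\cdot k_v^h\subseteq N'_{t_1}\cdot k_v^h\subseteq N_{(0,\dots,0)}\cdot k_v^h$, hence equality.

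Finally I would combine the two requirements. The set of $t_1\in k$ for which $N'_{t_1}/k$ is Galois of degree $|G|$ and $\bar\theta_{t_1}$ generates $N'_{t_1}/k$ is a Hilbert subset $H$ of $k$, nonempty since $k$ is Hilbertian; since Hilbert subsets of a Hilbertian field are dense in the topology induced by finitely many Krull valuations, $H$ meets the set $\{t_1\in k : v(t_1)\geq\gamma_v\text{ for all }v\in\mathcal S\}$ with the finitely many bad $t_1$ of the reduction step removed. For $t_1$ in this intersection, put $F:=N'_{t_1}=N_{\bf t}$ with ${\bf t}=(t_1,s_2t_1,\dots,s_nt_1)$: then $\beta_{\bf t}=(\beta')_{t_1}\colon{\rm{Gal}}(F/k)\to G$ is a solution to $\alpha$ by Lemma \ref{lemma:spec_ffe}, and $F\cdot k_v^h=N_{(0,\dots,0)}\cdot k_v^h$ for every $v\in\mathcal S$ by the previous step. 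The two delicate points I expect are making the Krasner-type step robust for arbitrary (possibly non-discrete, higher-rank) Krull valuations, and the $v$-adic density of Hilbert subsets at finitely many Krull valuations of a Hilbertian field; the remainder is bookkeeping with the specialisation lemmas of \S\ref{ssec:proof_1.2} and the ramification lemmas of \S\ref{ssec:proof_1.1}.
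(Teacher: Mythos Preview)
Your approach is essentially the paper's, with one cosmetic difference and one genuine slip to fix.

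The cosmetic difference is in the one-variable reduction. The paper introduces a \emph{fresh} indeterminate $T$, first picks ${\bf t}\in k^n$ by Hilbertianity so that $N_{\bf t}/k$ has degree $|G|$, then sets $a_i(T)\in k[T]$ with $a_i(0)=0$ and $a_i(1)=t_i$, and works with $(N(T))_{\bf a}/k(T)$; the specialisation at $T=1$ already has degree $|G|$, so $(N(T))_{\bf a}/k(T)$ automatically has degree $|G|$. You instead reuse $T_1$ and specialise $(T_2,\dots,T_n)\mapsto(s_2T_1,\dots,s_nT_1)$. This is fine, but ``since $k$ is Hilbertian, choose $s_i$'' is not a direct application of Hilbert irreducibility; the clean justification is the same as the paper's: pick ${\bf t}$ with $t_1\neq 0$, set $s_i=t_i/t_1$, and note that $(N')_{t_1}=N_{\bf t}$ forces $[N':k(T_1)]=|G|$. (A small related slip: the reductions $\bar y_i$ form a $k$-basis of the \'etale $k$-algebra $B/\mathfrak{p}B$, not of $N_{(0,\dots,0)}$ itself; the separability of $N_{(0,\dots,0)}/k$ follows because the latter is a quotient of the former.)

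The genuine slip is the inclusion $N_{(0,\dots,0)}\subseteq N'_{t_1}$: these are two unrelated specialisations over $k$ and there is no such containment in general. What you actually need---and what the paper uses---is that Krasner's lemma together with continuity of roots over the Henselian field $k_v^h$ gives \emph{equality} of the splitting fields of $Q(0,Y)$ and $Q(t_1,Y)$ over $k_v^h$ once $v(t_1)$ is large enough; this is \cite[Proposition~12.3]{Jar91} in the paper. Your two ``delicate points'' are thus exactly the two external inputs the paper invokes: \cite[Proposition~12.3]{Jar91} for Krasner over arbitrary Henselian Krull-valued fields, and \cite[Proposition~19.6]{Jar91} for the $v$-adic density of Hilbert sets at finitely many Krull valuations. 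With these references and the correction above, your argument goes through and matches the paper's steps (a)--(e).
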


\noindent
{\it{Addendum}} \ref{prop:2}. The field $F$ can be chosen as follows. Given a new in\-de\-ter\-mi\-nate $T$, consider the Galois field extension $N(T)$ of $k(T,{\bf{T}})$. Then there exist ${\bf{a}} \in k[T]^n$ and $t \in k$ such that

\vspace{0.5mm}

\noindent
$\bullet$ the specialization $(N(T))_{\bf{a}}/k(T)$ of $N(T)/k(T)({\bf{T}})$ at ${\bf{a}}$ is Galois of group $G$,

\vspace{0.5mm}

\noindent
$\bullet$ the specialization $((N(T))_{\bf{a}})_{t}/k$ of $(N(T))_{\bf{a}}/k(T)$ at $t$ is Galois of group $G$,

\vspace{0.5mm}

\noindent
$\bullet$ $F=((N(T))_{\bf{a}})_{t}$.

\begin{proof}
We break the proof into five parts.

\vspace{2mm}

\noindent
(a) Let $P({\bf{T}},Y) \in k[{\bf{T}}][Y]$ be the minimal polynomial of a primitive element of $N/k({\bf{T}})$, assumed to be integral over $k[{\bf{T}}]$. As $k$ is assumed to be Hilbertian, there is ${\bf{t}} = (t_1, \dots, t_n) \in k^n$ such that $P({\bf{t}},Y) \in k[Y]$ is irreducible over $k$ and separable. Hence, the specialization $N_{\bf{t}}/k$ of $N/k({\bf{T}})$ at ${\bf{t}}$ is Galois of degree $[N:k({\bf{T}})]$ and $N_{\bf{t}}$ is the splitting field over $k$ of $P({\bf{t}},Y)$. Given a new indeterminate $T$, for $i \in \{1, \dots, n\}$, fix $a_i(T) \in k[T]$ with $a_i(0)=0$ and $a_i(1)=t_i$. Consider the Galois extension $N(T)/k(T)({\bf{T}})$ of group $G$ and its specialization $(N(T))_{\bf{a}} / k(T)$ at ${\bf{a}}= (a_1(T), \dots, a_n(T))$. As $P({\bf{t}},Y)$ is separable and $a_i(1)=t_i$ for every $i \in \{1, \dots, n\}$, the polynomial $P({\bf{a}},Y)$ is separable. Hence, $(N(T))_{\bf{a}}$ is the splitting field over $k(T)$ of $P({\bf{a}},Y)$ and $(N(T))_{\bf{a}}/k(T)$ is separable. Using again that $P({\bf{t}},Y)$ is separable and $a_i(1)=t_i$ for every $i \in \{1, \dots, n\}$, we get that $((N(T))_{\bf{a}})_1$ is the splitting field over $k$ of $P({\bf{t}},Y)$, i.e., equals $N_{\bf{t}}$. As $[N_{\bf{t}}:k]=[N(T):k(T, {\bf{T}})]$, we get the equality $[(N(T))_{\bf{a}}:k(T)] = [N(T):k(T)({\bf{T}})]$.

\vspace{2mm}

\noindent
(b) The maximal ideal $\langle T \rangle$ is unramified in $(N(T))_{\bf{a}}/k(T)$. Indeed, as $\langle T_1, \dots, T_n \rangle$ is unra\-mi\-fied in $N/k({\bf{T}})$, there is a $k({\bf{T}})$-basis $y_1, \dots, y_d$ of $N$ which is integral over $k[{\bf{T}}]$ and whose discriminant $\Delta({\bf{T}}) \in k[{\bf{T}}]$ fulfills $\Delta(0, \dots, 0) \in k^*$. As $k({\bf{T}}, T)$ and $N$ are linearly disjoint over $k({\bf{T}})$, the elements $y_1, \dots, y_d$ yield a $k({\bf{T}}, T)$-basis of $N(T)$ which is contained in the integral closure $B$ of $k(T)[{\bf{T}}]$ in $N(T)$ and with $\Delta({\bf{T}}) B \subseteq k(T)[{\bf{T}}] y_1 \oplus \cdots \oplus k(T)[{\bf{T}}] y_d.$ As $\Delta(a_1(0), \dots, a_n(0)) = \Delta(0, \dots, 0) \not=0$, we have $\Delta({\bf{a}}) \not=0$ (in particular, $\langle T_1 - a_1(T), \dots, T_n - a_n(T) \rangle$ is unramified in $N(T)/k(T)({\bf{T}})$). Hence, if $\mathfrak{P}$ is a maximal ideal of $B$ lying over $\langle T_1 - a_1(T), \dots, T_n - a_n(T) \rangle$, we have $(N(T))_{\bf{a}} = B/\mathfrak{P} = k(T) \overline{y_1} \oplus \cdots \oplus k(T) \overline{y_d},$ where $\overline{y_i}$ is the reduction modulo $\mathfrak{P}$ of $y_i$ ($1 \leq i \leq d$). As the discriminant $\Delta(a_1(T), \dots, a_n(T))$ of $\overline{y_1}, \dots, \overline{y_d}$ does not vanish at 0, the claim holds.

\vspace{2mm}

\noindent
(c) The specialization $((N(T))_{\bf{a}})_{0}/k$ of $(N(T))_{\bf{a}}/k(T)$ at 0 equals $N_{(0, \dots, 0)}/k$ (in particular, $N_{(0, \dots, 0)}/k$ is Galois by (b)). Indeed, as $\langle T_1, \dots, T_n \rangle$ is unramified in $N/k({\bf{T}})$, Lemma \ref{compositum}(2) yield that the specialization $N_{(0,0, \dots, 0)}/k$ of $N(T)/k(T, {\bf{T}})$ at $(0,0, \dots, 0)$ equals $N_{(0, \dots, 0)}/k$. Moreover, $\langle T_1 - a_1(T), \dots, T_n - a_n(T) \rangle$ is unramified in $N(T)/k(T)({\bf{T}})$ (see (b)). The claim then follows from Lemma \ref{lemma2}, if 0 is not in the exceptional set from that lemma, when applied to $N(T)/k(T, T_1, \dots, T_n)$. But, in the present situation, the latter set is reduced to the set of roots of $\Delta(a_1(T), \dots, a_n(T))$ (see Remark \ref{rem:finite}), which does not contain 0 (see (b)).

\vspace{2mm}

\noindent
(d) We now specialize $T$ suitably. As $\langle T \rangle$ is unramified in $(N(T))_{\bf{a}}/k(T)$ (see (b)) and $k$ is infinite, there is a primitive ele\-ment of $(N(T))_{\bf{a}}$ over $k(T)$, which is integral over $k[T]$ and whose minimal polynomial $Q(T,Y) \in k[T][Y]$ is such that $Q(0,Y) \in k[Y]$ is separable (see, e.g., \cite[corollaire 1.5.16]{Deb09}). Krasner's lemma and the ``continuity of roots" (see, e.g., \cite[Proposition 12.3]{Jar91}), applied to the separable polynomial $Q(0,Y)$ over the Henselian valued field $(k_v^h, v^h)$, provide an element $\alpha_v$ of the value group of $v$ such that, if $t \in k$ fulfills $v(t) \geq \alpha_v$, then the splitting fields of $Q(0,Y)$ and $Q(t,Y)$ over $k_v^h$ coincide.

Next, as $k$ is Hilbertian, \cite[Proposition 19.6]{Jar91} yields $t \in k$ such that $v(t) \geq \alpha_v$ for every $v \in \mathcal{S}$, and such that $Q(t, Y) \in k[Y]$ is irreducible over $k$ and separable. Let $F$ be the field generated over $k$ by one root of $Q(t, Y)$. As the latter is irreducible over $k$, we have $[F:k] = [(N(T))_{\bf{a}}:k(T)]$ and $F/k$ is the specialization $((N(T))_{\bf{a}})_{t}/k$ of $(N(T))_{\bf{a}}/k(T)$ at $t$. In particular, $((N(T))_{\bf{a}})_{t}/k$ is Galois of degree $[((N(T))_{\bf{a}}:k(T)]$.

Finally, as $Q(0, Y)$ is separable, the splitting field of $Q(0, Y)$ over $k$ equals $((N(T))_{\bf{a}})_0$, i.e., equals $N_{(0, \dots, 0)}$ (see (c)). Hence, for $v \in \mathcal{S}$, we have $N_{(0, \dots, 0)} \cdot k_v^h = F \cdot k_v^h$.

\vspace{2mm}

\noindent
(e) Finally, as $\beta : {\rm{Gal}}(N / k({\bf{T}})) \rightarrow G$ is a solution to $\alpha_{k({\bf{T}})}$, the map $\beta \circ {\rm{res}}^{N(T)/k(T, {\bf{T}})}_{N/k({\bf{T}})} : {\rm{Gal}}(N(T) / k(T, {\bf{T}})) \rightarrow G$ is a solution to $\alpha_{k(T, {\bf{T}})} = (\alpha_{k(T)})_{k(T)({\bf{T}})}$. Since $(N(T))_{\bf{a}} /k(T)$ is a Galois field extension of degree $[N(T) :k(T)({\bf{T}})]$, Lemma \ref{lemma:spec_ffe} shows that $(\beta \circ {\rm{res}}^{N(T)/k(T, {\bf{T}})}_{N/k({\bf{T}})})_{\bf{a}} : {\rm{Gal}}((N(T))_{\bf{a}}/k(T)) \rightarrow G$ is a solution to $\alpha_{k(T)}$. Moreover, $F = ((N(T))_{\bf{a}})_{t}$ is a Galois field  extension of $k$ of degree $[(N(T))_{\bf{a}} : k(T)]$. Applying Lemma \ref{lemma:spec_ffe} once more then shows that $((\beta \circ {\rm{res}}^{N(T)/k(T, {\bf{T}})}_{N/k({\bf{T}})})_{\bf{a}})_t : {\rm{Gal}}(F/k) \rightarrow G$ is a solution to $\alpha$. This concludes the proof.
\end{proof}

By Lemma \ref{prop:2} and the above, to conclude the proof of Theorem \ref{thm:intro_1}, it would suffice to show that the ideal $\langle T_1, \dots, T_n \rangle$ is unramified in $N/k({\bf{T}})$. As proved above, $\langle T_1, \dots, T_n \rangle$ is unramified in $M/k({\bf{T}})$ and $N \subseteq M$. However, with our definition of ``unramified", which is designed for computing residue fields, it is not clear that unramifiedness is closed under subextensions. To conclude the proof of Theorem \ref{thm:intro_1}, we proceed as follows.
 
With $\Gamma = {\rm{Gal}}(M/k({\bf{T}}))$, consider the finite embedding problem ${\rm{res}}^{M/k({\bf{T}})}_{L/k} : \Gamma \rightarrow {\rm{Gal}}(L/k)$ over $k$. Clearly, ${\rm{id}} : {\rm{Gal}}(M/k({\bf{T}})) \rightarrow \Gamma$ is a solution to $({\rm{res}}^{M/k({\bf{T}})}_{L/k})_{k({\bf{T}})}$. By Lemma \ref{lemma:fj2}, we may apply Lemma \ref{prop:2} and its addendum at the level of $M$. Namely, given a new indeterminate $T$, there exist ${\bf{a}} \in k[T]^n$, $t \in k$, and a solution ${\rm{Gal}}(F/k) \rightarrow \Gamma$ to ${\rm{res}}^{M/k({\bf{T}})}_{L/k}$ such that the following conditions hold:

\noindent
$\bullet$ the specialization $(M(T))_{\bf{a}}/k(T)$ of $M(T)/k(T)({\bf{T}})$ at ${\bf{a}}$ is Galois of group $\Gamma$,

\noindent
$\bullet$ the specialization $((M(T))_{\bf{a}})_{t}/k$ of $(M(T))_{\bf{a}}/k(T)$ at $t$ is Galois of group $\Gamma$,

\noindent
$\bullet$ $F=((M(T))_{\bf{a}})_{t}$ and $F \subseteq L \cdot k_v^h$ for every $v \in \mathcal{S}$.

Finally, we proceed as in the last part of the proof of Lemma \ref{prop:2}. Consider the solution $\beta: {\rm{Gal}}(N/k({\bf{T}})) \rightarrow G$ to $\alpha_{k({\bf{T}})}$. Then 
$$\beta \circ {\rm{res}}^{N(T)/k(T, {\bf{T}})}_{N/k({\bf{T}})} : {\rm{Gal}}(N(T)/k(T,{\bf{T}})) \rightarrow G$$ 
is a solution to $(\alpha_{k(T)})_{k(T)({\bf{T}})}$. As $(M(T))_{\bf{a}}/k(T)$ is Galois of degree $[M(T) : k(T)({\bf{T}})]$, the specialization $(N(T))_{\bf{a}}/k(T)$ of $N(T)/k(T)({\bf{T}})$ at ${\bf{a}}$, which is a subextension of $(M(T))_{\bf{a}}/k(T)$, is Galois of degree $[N(T) : k(T)({\bf{T}})]$. Lemma \ref{lemma:spec_ffe} then yields a solution ${\rm{Gal}}((N(T))_{\bf{a}}/k(T)) \rightarrow G$ to $\alpha_{k(T)}$. Again, $((M(T))_{\bf{a}})_{t}/k$ is Galois of degree $[(M(T))_{\bf{a}}:k(T)]$. Hence, $((N(T))_{\bf{a}})_{t}/k$ is Galois of degree $[(N(T))_{\bf{a}}:k(T)]$. Lemma \ref{lemma:spec_ffe} then yields a solution ${\rm{Gal}}(((N(T))_{\bf{a}})_{t}/k) \rightarrow G$ to $\alpha$ such that $((N(T))_{\bf{a}})_{t} \subseteq ((M(T))_{\bf{a}})_{t} \subseteq L \cdot k_v^h$ for every $v \in \mathcal{S}$, thus concluding the proof.

\section{On the Beckmann--Black problem} \label{sec:proof_2}

The following theorem, which solves the Beckmann--Black problem for finite embedding problems with abelian kernels over arbitrary fields, is the aim of the present section:

\begin{theorem} \label{thm:intro_2}
Let $\alpha: G \rightarrow {\rm{Gal}}(L/k)$ be a finite embedding problem with abelian kernel over any field $k$ and $\gamma : {\rm{Gal}}(F/k) \rightarrow G$ a solution to $\alpha$. There are $t_0 \in k$ and a geometric solution $\beta : {\rm{Gal}}(E/k(T)) \rightarrow G$ to $\alpha$ with $E \cap \overline{k}=L$ such that $E_{t_0}/k = F/k$ and $\beta_{t_0} = \gamma$.
\end{theorem}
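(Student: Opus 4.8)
The plan is to \emph{derive} the statement from Theorem~\ref{thm:intro_1}, applied over the rational function field $k(T)$ (which is Hilbertian for every $k$) to a \emph{split} model of $\alpha$, and then to twist the resulting geometric solution by $\gamma$. Write $A={\rm{ker}}(\alpha)$. Since $A$ is abelian, conjugation in $G$ induces a well-defined action $\rho:{\rm{Gal}}(L/k)\rightarrow{\rm{Aut}}(A)$, and I form the split finite embedding problem $\alpha_0:G_0:=A\rtimes_\rho{\rm{Gal}}(L/k)\rightarrow{\rm{Gal}}(L/k)$; it has the same abelian kernel $A$ and agrees with $\alpha$ on the kernel together with the ${\rm{Gal}}(L/k)$-action. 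Fix $t_0\in k$ (e.g.\ $t_0=0$) and let $v_{t_0}$ be the $(T-t_0)$-adic valuation of $k(T)$, whose residue field is $k$. Applying Theorem~\ref{thm:intro_1} to the split finite embedding problem $(\alpha_0)_{k(T)}$ over $k(T)$ with $\mathcal{S}=\{v_{t_0}\}$ — and keeping track, from its proof, that the solution field may be taken regular over $L$ — yields a solution ${\rm{Gal}}(E'_0/k(T))\rightarrow G_0$ to $(\alpha_0)_{k(T)}$ with $E'_0\subseteq L(T)\cdot k(T)^h_{v_{t_0}}$ and $E'_0\cap\overline{k}=L$. The local condition says precisely that $E'_0/L(T)$ is totally split at the place $T=t_0$; in particular, the specialization of $E'_0/k(T)$ at $t_0$ is $L/k$.

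Next I twist $E'_0$ by $F$. The constant extension $F(T)/k(T)$ is a geometric solution to $\alpha$ via $\gamma$. As $E'_0$ is regular over $L$ while $F(T)/L(T)$ is constant, $E'_0$ and $F(T)$ are linearly disjoint over $L(T)$ with $E'_0\cap F(T)=L(T)$; hence $\widehat{G}:={\rm{Gal}}(E'_0F(T)/k(T))$ is the fibre product $G_0\times_{{\rm{Gal}}(L/k)}G$ (formed along $\alpha_0$ and $\alpha$), with ${\rm{Gal}}(E'_0F(T)/L(T))=A\times A$ and distinguished normal subgroups $A\times 1={\rm{ker}}(\alpha_0)\times 1$ and $1\times A=1\times{\rm{ker}}(\alpha)$. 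Because $\rho$ governs conjugation on the kernel of $\alpha$ and of $\alpha_0$ alike, the conjugation action of $\widehat{G}$ on $A\times A$ factors through ${\rm{Gal}}(L/k)$ acting diagonally; consequently the diagonal $D:=\{(a,a):a\in A\}\subseteq A\times A$ is normal in $\widehat{G}$. Since $|D|=|A|$, the field $E:=(E'_0F(T))^D$ is Galois over $k(T)$ of degree $|\widehat{G}|/|A|=|G|$ and contains $(E'_0F(T))^{A\times A}=L(T)$, hence $L$.

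It remains to check $E\cap\overline{k}=L$ and $E_{t_0}=F$, after which Lemma~\ref{lemma:spec_ffe2} finishes the argument. For the first point: $E'_0F(T)=E'_0\otimes_L F$ is regular over $F$ (because $E'_0$ is regular over $L$), so its field of constants is $F$, on which $\widehat{G}$ acts through the projection ${\rm{pr}}:\widehat{G}\rightarrow{\rm{Gal}}(F/k)$ onto the second factor; thus the field of constants of $E=(E'_0F(T))^D$ is $F^{{\rm{pr}}(D)}=F^{{\rm{ker}}(\alpha)}=L$. For the second point: $T=t_0$ is unramified in $E'_0F(T)$ — it is unramified in the constant extension $F(T)/k(T)$ and, being totally split, in $E'_0/L(T)$ — and Lemma~\ref{compositum}(1), applied over $L(T)$, gives that the specialization of $E'_0F(T)/L(T)$ at $t_0$ equals $L\cdot F=F$; hence a decomposition group $D_\mathfrak{P}$ at a place $\mathfrak{P}$ over $t_0$ maps isomorphically to ${\rm{Gal}}(F/k)$ under ${\rm{pr}}$ and, $E'_0$ being totally split at $t_0$, contains $1\times{\rm{ker}}(\alpha)$. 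It follows that $D_\mathfrak{P}\cap(A\times 1)=1$, whence $D\cap D_\mathfrak{P}=1$ (if $(a,a)\in D_\mathfrak{P}$ then, using $(1,a)\in D_\mathfrak{P}$, also $(a,1)\in D_\mathfrak{P}\cap(A\times 1)=1$), so the reduction of $E$ at $t_0$ is the full residue field $F$, i.e.\ $E_{t_0}=F$. Now $E/k(T)$ is Galois, contains $L$, has degree $|G|$ and satisfies $E_{t_0}=F$, so by Lemma~\ref{lemma:spec_ffe2} there is an isomorphism $\beta:{\rm{Gal}}(E/k(T))\rightarrow G$ which is a solution to $\alpha_{k(T)}$ — i.e.\ a geometric solution to $\alpha$ — with $\beta_{t_0}=\gamma$; together with $E\cap\overline{k}=L$ this is exactly the assertion.

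The only genuinely delicate point is the first step: Theorem~\ref{thm:intro_1} as stated delivers the total-splitting condition at $v_{t_0}$ but not the regularity $E'_0\cap\overline{k}=L$, and it is this regularity that makes the twist regular over $L$ and gives it the correct specialization. One reads regularity off the proof of Theorem~\ref{thm:intro_1} (the solution field is built inside a tower over $k(T)$-regular abelian extensions, and the Hilbert set used in the specialization step may be intersected with the complement of the thin set on which the field of constants grows; alternatively, one runs the construction over $k(T,S)$ and then specializes $S$ suitably through the Hilbertian field $k(T)$). Everything after that is formal: the identification of $\widehat{G}$ with the fibre product, the normality of the diagonal $D$, and the decomposition-group computation that pins down $E_{t_0}=F$.
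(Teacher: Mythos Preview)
Your argument is correct, and in fact your ``delicate point'' about regularity is not delicate at all: the conclusion $E'_0\subseteq L(T)\cdot k(T)^h_{v_{t_0}}$ of Theorem~\ref{thm:intro_1} already gives $E'_0\subseteq L((T-t_0))$, and since $L$ is algebraically closed in $L((T-t_0))$ this forces $E'_0\cap\overline{k}=L$. So the theorem, as stated, hands you regularity for free; no inspection of its proof is needed. The rest of your twist-by-$F$ construction then goes through exactly as you wrote it.

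Your route is close to the paper's treatment of the \emph{split} case (Proposition~\ref{prop:split}), which performs the same diagonal twist inside ${\rm{Gal}}(EF/k(T))$ but with the original $G$ in place of your $G_0$. Where you genuinely diverge is in the \emph{non-split} case. The paper (\S\ref{ssec:proof_2.2}) invokes the weak$\rightarrow$split reduction: it first produces a separate regular geometric solution $\delta:{\rm{Gal}}(F_2/k(T_1))\rightarrow G$, realizes the fibre product $G_\varphi^2=G\times_{G/A}G$ as ${\rm{Gal}}(FF_2/k(T_1))$, applies Proposition~\ref{prop:split} over $k(T_1)$ to the split problem $G_\varphi^2\rightarrow{\rm{Gal}}(F_2/k(T_1))$ in a second variable $T_2$, and finally specializes $T_2$ via a Krasner--Hilbert argument. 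Your shortcut---replace $G$ by the canonical split model $G_0=A\rtimes_\rho{\rm{Gal}}(L/k)$ from the outset and twist once---collapses this two-variable d\'etour into a single step; it works precisely because $A$ abelian makes the conjugation actions of $G_0$ and $G$ on $A$ agree through ${\rm{Gal}}(L/k)$, so the diagonal is normal in $G_0\times_{{\rm{Gal}}(L/k)}G$. The paper's longer route, by contrast, never leaves the group $G$.
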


The present section is organized as follows. In \S\ref{ssec:proof_2.1}, we prove Theorem \ref{thm:intro_2} under the extra assumption that $\alpha$ splits, and then show in \S\ref{ssec:proof_2.2} that the latter assumption is redundant. Finally, in \S\ref{ssec:proof_2.3}, we derive a more precise version of Theorem \ref{thm:intro_3}.

\subsection{Proof of Theorem \ref{thm:intro_2} under an extra assumption} \label{ssec:proof_2.1}

\begin{proposition} \label{prop:split}
Let $\alpha :G \rightarrow {\rm{Gal}}(L/k)$ be a finite split embedding problem with abelian kernel over an arbitrary field $k$ and $\gamma : {\rm{Gal}}(F/k) \rightarrow G$ a solution to $\alpha$. There are a geometric solution $\beta :{\rm{Gal}}(E/k(T)) \rightarrow G$ to $\alpha$ with $E \cap \overline{k} = L$, such that the specialization $E_0/k$ of $E/k(T)$ at $0$ equals $F/k$, and such that the specialization $\beta_{0}$ of $\beta$ at $0$ equals $\gamma$.
\end{proposition}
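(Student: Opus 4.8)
The plan is to rerun the construction from the proof of Theorem~\ref{thm:intro_1} (see~\S\ref{ssec:proof_1.3}) with two changes. First, the auxiliary abelian extension used there as a building block (the Galois extension of $k(T)$ of group ${\rm{ker}}(\alpha)$ with trivial specialization at $0$, produced by the twisting lemma) will be replaced by a \emph{twist} engineered so that the specialization at $0$ of the eventual geometric solution is the prescribed pair $(F,\gamma)$ rather than the trivial datum $L/L$. Second, since $k$ need not be Hilbertian, the reduction in the number of indeterminates will be stopped one step short: instead of specializing $T_1,\dots,T_n$ to elements of $k$, I would specialize them to polynomials in a single new indeterminate $T$, using the Hilbertianity of the rational function field $k(T)$ (which holds over any base field).

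I would set up notation as in~\S\ref{ssec:proof_1.3}: fix a section $\alpha'$ of $\alpha$, put $G'=\alpha'({\rm{Gal}}(L/k))$ and $A={\rm{ker}}(\alpha)$, so $\psi:A\rtimes G'\to G$ of~\eqref{eqdefpsi} is an isomorphism; fix a $k$-basis $c_1,\dots,c_n$ of $L$ and an enumeration ${\rm{Gal}}(L/k)=\{\sigma_1,\dots,\sigma_n\}$. The solution $\gamma:{\rm{Gal}}(F/k)\to G$ amounts to the Galois extension $F/L$ of group $A$, namely ${\rm{Gal}}(F/L)=\gamma^{-1}(A)$ with $\gamma$ restricting to an isomorphism onto $A$, equipped with the conjugation action of ${\rm{Gal}}(L/k)$, which is compatible through $\gamma$ and $\alpha'$ with the action of $G'$ on $A$. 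D\`ebes' twisting lemma~\cite{Deb99a}, applied over $L$ rather than over $k$, provides a Galois extension $E^{\circ}/L(T)$ of group $A$ that is regular over $L$ (i.e. $E^{\circ}\cap\overline{L}=L$), in which $\langle T\rangle$ is unramified, and whose specialization at $0$ is $F/L$ together with the distinguished isomorphism $\gamma|_{{\rm{Gal}}(F/L)}$ onto $A$. I would then feed $E^{\circ}$ into the construction of~\S\ref{ssec:proof_1.3} in place of the extension ``$FL/L(T)$'' used there: with $P\in L[T][Y]$ the minimal polynomial of an integral primitive element of $E^{\circ}/L(T)$, let $E^{(i)}$ be generated over $L(U_i)$ by a root of $\sigma_i(P)(U_i,Y)$, set $M^{(i)}=E^{(i)}(U_1,\dots,U_{i-1},U_{i+1},\dots,U_n)$, $M=M^{(1)}\cdots M^{(n)}$, pass to the indeterminates ${\bf T}$ via the linear change of variables~\eqref{eqsystem}, and put $N=M^{{\rm{ker}}(\psi\circ\varphi\circ\phi)}$.

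Lemmas~\ref{lemma:fj1}, \ref{lemma:fj2} and~\ref{lemma:fj3} go through essentially verbatim (they are written for the polynomials $\sigma_i(P)$ with coefficients in $L$; the only change is that the discriminants now lie in $L[{\bf T}]$, which causes no trouble since their relevant specializations remain nonzero and ${\rm{Gal}}(L({\bf T})/k({\bf T}))$ still permutes the factors entering the proof of Lemma~\ref{lemma:fj2}(3)). Thus $\beta:{\rm{Gal}}(N/k({\bf T}))\to G$ of~\eqref{eqbetafin} is a solution to $\alpha_{k({\bf T})}$, the ideal $\langle T_1,\dots,T_n\rangle$ is unramified in $M/k({\bf T})$, and $N/k({\bf T})$ is regular over $L$ (because $E^{\circ}$, hence each $E^{(i)}$, hence $M$ and its subextension $N$, is regular over $L$, and $\overline{L}=\overline{k}$ with $L\subseteq N$); moreover, since the specialization of $M$ at the origin is now $F$ rather than $L$, the specialization of $\beta$ at $(0,\dots,0)$ is $\gamma$ (in particular $N_{(0,\dots,0)}=F$). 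Calibrating the twist of $E^{\circ}$ so that this last assertion holds \emph{exactly} — not merely up to producing some solution with underlying field $F$ — is the point I expect to require the most care, and it is precisely the flexibility of the twisting lemma that should provide it.

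It remains to pass from ${\bf T}$ to a single indeterminate $T$. Introduce $T$ and consider $M(T)/k(T)({\bf T})$; substituting $T_i=TS_i$ with new indeterminates $S_1,\dots,S_n$ turns the minimal polynomial of a primitive element into one irreducible over $k(T)(S_1,\dots,S_n)$, so Hilbert's irreducibility theorem over the Hilbertian field $k(T)$ yields ${\bf a}=(a_1(T),\dots,a_n(T))\in k[T]^n$ with ${\bf a}(0)=(0,\dots,0)$ such that $(M(T))_{\bf a}/k(T)$ is Galois of group ${\rm{Gal}}(M/k({\bf T}))$, the ideal $\langle T\rangle$ is unramified in it (as in part~(b) of the proof of Lemma~\ref{prop:2}, using ${\bf a}(0)=(0,\dots,0)$ and the unramifiedness of $\langle T_1,\dots,T_n\rangle$ in $M/k({\bf T})$), and $(M(T))_{\bf a}$ is still regular over $L$ (impose simultaneously the analogous Hilbert condition over a finite subextension of $M/k$ containing $L$; finitely many such conditions are met jointly by a tuple in $k[T]^n$ through the origin). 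Then $(N(T))_{\bf a}/k(T)$, being a subextension of $(M(T))_{\bf a}/k(T)$, is Galois of degree $[N(T):k(T)({\bf T})]=|G|$ and regular over $L$; applying Lemma~\ref{lemma:spec_ffe} to the solution $\beta\circ{\rm{res}}^{N(T)/k(T,{\bf T})}_{N/k({\bf T})}$ of $(\alpha_{k(T)})_{k(T)({\bf T})}$ yields a geometric solution $\beta':{\rm{Gal}}(E/k(T))\to G$ to $\alpha$ with $E:=(N(T))_{\bf a}$ satisfying $E\cap\overline{k}=L$. Finally ${\bf a}(0)=(0,\dots,0)$ gives $E_0=((N(T))_{\bf a})_0=N_{(0,\dots,0)}=F$ by Lemmas~\ref{compositum}(2) and~\ref{lemma2} (as in part~(c) of the proof of Lemma~\ref{prop:2}, the exceptional set there not containing $0$ by Remark~\ref{rem:finite}), and by naturality of specialization $\beta'_0$ equals the specialization of $\beta:{\rm{Gal}}(N/k({\bf T}))\to G$ at $(0,\dots,0)$, namely $\gamma$; so $\beta'$ is the desired $\beta$. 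The chief obstacle throughout is the compatibility bookkeeping: choosing the twist so that the origin-specialization of the big solution is $\gamma$ on the nose, and producing a single tuple ${\bf a}$ that simultaneously preserves the full Galois group over $k(T)$, keeps $\langle T\rangle$ unramified, keeps regularity over $L$, and passes through the origin.
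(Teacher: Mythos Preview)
Your approach is genuinely different from the paper's, and it has a real gap. The paper does \emph{not} rerun the construction of \S\ref{ssec:proof_1.3} with a twisted building block. Instead it applies Theorem~\ref{thm:intro_1} as a black box to the Hilbertian field $k(T)$ with $\mathcal{S}$ consisting of the $T$-adic valuation, which immediately yields a geometric solution ${\rm{Gal}}(E/k(T))\to G$ with $E\subseteq L((T))$ (hence $E\cap\overline{k}=L$ and $E_0=L$). It then forms the compositum $EF$, identifies ${\rm{Gal}}(EF/k(T))$ with the fibre product $\mathcal{G}=\{((a,g'),(b,g')):a,b\in{\rm{ker}}(\alpha),\,g'\in G'\}$ inside $(\ker(\alpha)\rtimes G')^2$ via $\beta$ and $\gamma$, and passes to the fixed field $M$ of the diagonal $\mathcal{H}=\{((a,1),(a,1))\}$ (normal because $\ker(\alpha)$ is abelian). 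Since $E_0=L$, one gets $(EM)_0=(EF)_0=E_0F=F$, whence $M_0=F$, and Lemma~\ref{lemma:spec_ffe2} furnishes the solution with specialization exactly $\gamma$. This twist-by-fibre-product is short and avoids any calibration inside the wreath product.

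The step you flag as ``the point I expect to require the most care'' is in fact a genuine obstruction, and it bites already at the field level, not only at the level of matching $\gamma$. From $M_{(0,\dots,0)}=F$ you infer $N_{(0,\dots,0)}=F$, but this needs the decomposition group $D\subseteq{\rm{Gal}}(M/k({\bf T}))$ at the origin (of order $|G|$) to meet ${\rm{Gal}}(M/N)=\phi^{-1}(\ker\varphi)$ (of order $|A|^{n-1}$) trivially. Restricting to ${\rm{Gal}}(M/L({\bf T}))\cong A^{G'}$, the group $D_L:=D\cap A^{G'}$ has order $|A|$ and surjects onto each factor (since every $(M^{(i)})_{(0,\dots,0)}=F$), so it is a ``graph'' subgroup; you need $\varphi|_{D_L}$ injective. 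Take $A=\mathbb{Z}/2$, $G'=\mathbb{Z}/2$ acting trivially (so $G=(\mathbb{Z}/2)^2$, $n=2$). In $(\mathbb{Z}/2)^2$ the \emph{only} order-$2$ subgroup surjecting onto both factors is the diagonal $\{(a,a)\}$, so $D_L$ is forced to be that diagonal regardless of how you twist $E^{\circ}$. But $\ker(\varphi|_{A^{G'}})=\{(a_1,a_2):a_1+a_2=0\}$ is also that diagonal, hence $D_L\subseteq{\rm{Gal}}(M/N)$ and $N_{(0,\dots,0)}=L\neq F$. So the flexibility of the twisting lemma cannot save you here: for this $(A,G')$ your $N$ will always specialize to $L$ at the origin. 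The paper's fibre-product trick is precisely what circumvents this.
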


\begin{proof}
First, as $\alpha$ splits, there is an embedding $\alpha' : {\rm{Gal}}(L/k) \rightarrow G$ with $\alpha \circ \alpha' = {\rm{id}}_{{\rm{Gal}}(L/k)}$. Set $G' = \alpha({\rm{Gal}}(L/k))$. Then $G'$ acts on ${\rm{ker}}(\alpha)$ by conjugation and 
$$\phi: \left \{ \begin{array} {ccc}
{\rm{ker}}(\alpha) \rtimes G' & \longrightarrow & G \\
(a, g') & \longmapsto & a \cdot g'
\end{array} \right. $$
is an isomorphism. Moreover, since $\alpha$ splits and has abelian kernel, we may apply Theorem \ref{thm:intro_1} to get the existence of a geometric solution $\beta : {\rm{Gal}}(E/k(T)) \rightarrow G$ to $\alpha$ such that the inclusion $E \subseteq L((T))$ holds.

Now, we identify ${\rm{Gal}}(EF/k(T))$ and a subgroup of $({\rm{ker}}(\alpha) \rtimes G') \times ({\rm{ker}}(\alpha) \rtimes G')$. Since $E \cap \overline{k}=L$, the fields $E$ and $F(T)$ are linearly disjoint over $L(T)$. Hence, with
$$\widetilde{G} = \{ (\sigma, \tau) \in {\rm{Gal}}(E/k(T)) \times {\rm{Gal}}(F(T)/k(T)) \, : \, {\rm{res}}^{E/k(T)}_{L(T)/k(T)} (\sigma) = {\rm{res}}^{F(T)/k(T)}_{L(T)/k(T)}(\tau)\},$$
$$ \psi : \left \{ \begin{array} {ccc}
{\rm{Gal}}(EF/k(T)) & \longrightarrow & \widetilde{G} \\
\sigma & \longmapsto & ({\rm{res}}^{EF/k(T)}_{E/k(T)}(\sigma), {\rm{res}}^{EF/k(T)}_{F(T)/k(T)}(\sigma))
\end{array} \right. $$
is an isomorphism. Next, $\phi^{-1} \circ \beta : {\rm{Gal}}(E/k(T)) \rightarrow {\rm{ker}}(\alpha) \rtimes G'$ and $\phi^{-1} \circ \gamma \circ {\rm{res}}^{F(T)/k(T)}_{F/k} : {\rm{Gal}}(F(T)/k(T)) \rightarrow {\rm{ker}}(\alpha) \rtimes G'$ are isomorphisms. Then consider the monomorphism
$$ \theta : \left \{ \begin{array} {ccc}
\widetilde{G} & \longrightarrow & ({\rm{ker}}(\alpha) \rtimes G') \times ({\rm{ker}}(\alpha) \rtimes G')  \\
(\sigma, \tau) & \longmapsto & (\phi^{-1} \circ \beta(\sigma), \phi^{-1} \circ \gamma \circ {\rm{res}}^{F(T)/k(T)}_{F/k}(\tau))
\end{array} \right. .$$
We claim that the image of $\theta$ equals $\mathcal{G}= \{((a,g'),(b,g')) \, : \, (a,b) \in {\rm{ker}}(\alpha) \times {\rm{ker}}(\alpha), \, g' \in G'\}$. Hence, from the above, the composed map $\theta \circ \psi : {\rm{Gal}}(EF/k(T)) \rightarrow \mathcal{G}$ is an isomorphism.

To show the claim, for $(\sigma, \tau) \in {\rm{Gal}}(E/k(T)) \times {\rm{Gal}}(F(T)/k(T))$, set $\phi^{-1} \circ \beta (\sigma) = (a, g'_1) \in {\rm{ker}}(\alpha) \rtimes G'$ and $\phi^{-1} \circ \gamma \circ {\rm{res}}^{F(T)/k(T)}_{F/k}(\tau) = (b, g'_2) \in {\rm{ker}}(\alpha) \rtimes G'$. Set $g'_i = \alpha'(g_i)$ with $g_i \in G$ for $i=1, 2$. We then have
$$g_1 = \alpha \circ \alpha'(g_1) = \alpha(g'_1) = \alpha(ag'_1) = \alpha \circ \beta(\sigma) = {\rm{res}}^{E/k(T)}_{L/k}(\sigma) = {\rm{res}}^{L(T)/k(T)}_{L/k} ({\rm{res}}^{E/k(T)}_{L(T)/k(T)}(\sigma))$$
and, similarly, $g_2= {\rm{res}}^{L(T)/k(T)}_{L/k} ({\rm{res}}^{F(T)/k(T)}_{L(T)/k(T)}(\tau)).$ Hence, the image of $\theta$ equals $\mathcal{G}$, as claimed.

Finally, consider the subgroup $\mathcal{H} = \{((a,1),(a,1)) \, : \,  a \in {\rm{ker}}(\alpha)\}$ of $\mathcal{G}$. As ${\rm{ker}}(\alpha)$ is a\-be\-lian, $\mathcal{H}$ is normal in $\mathcal{G}$. Hence, with $M = (EF)^{(\theta \circ \psi)^{-1}(\mathcal{H})}$, the extension $M/k(T)$ is Galois of degree $[EF:k(T)] / |{\rm{ker}}(\alpha)| = |G|$. Moreover, as $\theta \circ \psi ({\rm{Gal}}(EF/L(T))) = \{((a,1),(b,1)) \, : \,  (a,b) \in {\rm{ker}}(\alpha) \times {\rm{ker}}(\alpha)\}$, we have $L(T) \subseteq M$. Furthermore, as $M$ and $E$ are linearly disjoint over $L(T)$, and as $[M:L(T)] = |{\rm{ker}}(\alpha)|$, we get $EF=EM$. Hence, the specialization $(EF)_0/k$ of $EF/k(T)$ at 0 is the specialization $(EM)_0/k$ of $EM/k(T)$ at 0. But, as $E \subseteq L((T))$, the ideal $\langle T \rangle$ is unramified in $E/k(T)$. By, e.g., \cite[Lemma 2.4.8]{FJ08} (or Lemma \ref{compositum}(1)), we then have $(EF)_0 = E_0 (F(T))_0 = E_0 F$ and $(EM)_0 = E_0 M_0$. Using again $E \subseteq L((T))$, we get $E_0 = L$ and so $F=M_0$. It then remains to apply Lemma \ref{lemma:spec_ffe2} to conclude the proof of the proposition, up to showing $M \cap \overline{k} = L$. But $M \overline{k} \supseteq MF = EF$, i.e., $E \overline{k} \subseteq M \overline{k}$. As $[E\overline{k} : \overline{k}(T)] = [E : L(T)] = [M:L(T)]$, we get $[M \overline{k} : \overline{k}(T)] = [M:L(T)]$, thus showing the desired equality $M \cap \overline{k}=L$.
\end{proof}

\subsection{Proof of Theorem \ref{thm:intro_2}} \label{ssec:proof_2.2}

Denote the canonical projection $G \rightarrow G/{\rm{ker}}(\alpha)$ by $\varphi$. Set $G_\varphi^2 = \{(g_1, g_2)) \in G^2 \, : \, \varphi(g_1)= \varphi(g_2)\}$ and, for $i \in \{1,2\}$, consider
$$\pi_i :\left  \{ \begin{array} {ccc}
G_\varphi^2 & \longrightarrow & G\\
(g_1, g_2) & \longmapsto & g_i
\end{array} \right. .$$
We also denote the kernel of $\pi_i$ by $N_i$ for $i \in \{1,2\}$. 

Since we are given the solution $\gamma$ to $\alpha$, whose kernel is abelian, we may combine the {\it{weak$\rightarrow$split reduction}} (see \cite[\S1 B) 2)]{Pop96} and \cite[\S2.1.2]{DD97b}) and, e.g., \cite[\S16.4]{FJ08} to get that $\alpha$ has a geometric solution $\delta : {\rm{Gal}}(F_2/k(T_1)) \rightarrow G$ fulfilling $F_2 \cap \overline{k} = L$.

\begin{lemma} \label{lemmafin1}
{\rm{(1)}} The following map is a well-defined isomorphism:
$$\epsilon: \left \{ \begin{array} {ccc}
{\rm{Gal}}(FF_2/k(T_1)) & \longrightarrow & G_\varphi^2\\
\sigma & \longmapsto & (\gamma \circ {\rm{res}}^{FF_2/k(T_1)}_{F/k}(\sigma), \delta \circ {\rm{res}}^{FF_2/k(T_1)}_{F_2/k(T_1)}(\sigma))
\end{array} \right. .$$

\noindent
{\rm{(2)}} We have $\epsilon ({\rm{Gal}}(FF_2/L(T_1))) = {\rm{ker}}(\alpha) \times {\rm{ker}}(\alpha)$.
\end{lemma}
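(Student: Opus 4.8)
The plan is to prove Lemma \ref{lemmafin1} by a direct verification that $\epsilon$ is well-defined, injective, and surjective, following the standard pattern used in the proof of Proposition \ref{prop:split}. Write $F_1 = F$, $\gamma_1 = \gamma \circ {\rm{res}}^{FF_2/k(T_1)}_{F/k}$, $\gamma_2 = \delta \circ {\rm{res}}^{FF_2/k(T_1)}_{F_2/k(T_1)}$. The key structural input is that $F = F_1$ and $F_2$ both contain $L$ (since $\gamma$ and $\delta$ are solutions to $\alpha$, resp.\ geometric solutions whose constant field is $L$), and that $F_1$ and $F_2(T_1)$ are linearly disjoint over $L(T_1)$, which follows from $F_2 \cap \overline{k} = L$: indeed $F_1 \subseteq \overline{k}$, so $F_1 \cap F_2(T_1) \subseteq \overline{k} \cap F_2(T_1) = F_2 \cap \overline{k} = L$, and over $L(T_1)$ linear disjointness is equivalent to $F_1 \cap F_2(T_1) = L(T_1)$ since $F_1/L$ is Galois (hence $F_1(T_1)/L(T_1)$ is Galois).

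\textbf{Proof of (1).} First I would check that $\epsilon$ lands in $G_\varphi^2$, i.e.\ $\varphi \circ \gamma_1(\sigma) = \varphi \circ \gamma_2(\sigma)$ for every $\sigma \in {\rm{Gal}}(FF_2/k(T_1))$. Since $\gamma$ is a solution to $\alpha$, we have $\alpha \circ \gamma = {\rm{res}}^{F/k}_{L/k}$, and since $\delta$ is a geometric solution, $\alpha \circ \delta = {\rm{res}}^{F_2/k(T_1)}_{L/k}$ after identifying $G/{\rm{ker}}(\alpha)$ with ${\rm{Gal}}(L/k)$ via $\varphi$ and $\alpha$; both composites therefore equal the restriction of $\sigma$ to $L$ (via $FF_2 \supseteq F_1, F_2 \supseteq L$), so $\varphi \circ \gamma_1(\sigma) = \varphi \circ \gamma_2(\sigma) = {\rm{res}}^{FF_2/k(T_1)}_{L/k}(\sigma)$, as needed. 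Next, $\epsilon$ is a homomorphism because each component is a composition of a restriction map with an isomorphism. For injectivity: if $\epsilon(\sigma)$ is trivial then $\gamma_1(\sigma) = 1$ and $\gamma_2(\sigma) = 1$, so $\sigma$ fixes both $F_1$ and $F_2(T_1)$ pointwise (using that $\gamma$ and $\delta$ are \emph{isomorphisms}), hence fixes $F_1 F_2 = FF_2$ pointwise, so $\sigma = {\rm{id}}$. Finally, for surjectivity it suffices to count: $|{\rm{Gal}}(FF_2/k(T_1))| = [F_1 F_2 : L(T_1)] \cdot [L:k] = [F_1:L]\,[F_2(T_1):L(T_1)]\,[L:k]$ by linear disjointness over $L(T_1)$; on the other hand $|G_\varphi^2| = |{\rm{ker}}(\alpha)|^2 \cdot |{\rm{Gal}}(L/k)| = [F_1:L]\,[F_2(T_1):L(T_1)]\,[L:k]$ since $[F_i:L] = |{\rm{ker}}(\alpha)|$ and $|G| = |{\rm{ker}}(\alpha)|\cdot[L:k]$. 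Thus $\epsilon$ is an injective homomorphism between groups of equal finite order, hence an isomorphism.

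\textbf{Proof of (2).} The subgroup ${\rm{Gal}}(FF_2/L(T_1))$ consists of those $\sigma$ fixing $L$ pointwise, i.e.\ those with ${\rm{res}}^{FF_2/k(T_1)}_{L/k}(\sigma) = {\rm{id}}$, which by the identity established in part (1) means exactly $\varphi \circ \gamma_1(\sigma) = \varphi \circ \gamma_2(\sigma) = 1$, i.e.\ $\gamma_1(\sigma), \gamma_2(\sigma) \in {\rm{ker}}(\alpha)$. Hence $\epsilon({\rm{Gal}}(FF_2/L(T_1))) \subseteq {\rm{ker}}(\alpha) \times {\rm{ker}}(\alpha)$; conversely $\epsilon^{-1}({\rm{ker}}(\alpha)\times{\rm{ker}}(\alpha))$ is a subgroup of ${\rm{Gal}}(FF_2/L(T_1))$ by the same computation, and since $\epsilon$ is a bijection and $|{\rm{ker}}(\alpha)\times{\rm{ker}}(\alpha)| = [F_1:L][F_2(T_1):L(T_1)] = [FF_2:L(T_1)] = |{\rm{Gal}}(FF_2/L(T_1))|$, the inclusion is an equality. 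I expect the only subtle point — the ``main obstacle'' such as it is — to be the bookkeeping of the identifications between $G/{\rm{ker}}(\alpha)$, ${\rm{Gal}}(L/k)$, and the various restriction maps, and the verification that $F_2 \cap \overline{k} = L$ genuinely yields linear disjointness of $F_1$ and $F_2(T_1)$ over $L(T_1)$; everything else is a degree count.
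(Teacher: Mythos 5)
Your proposal is correct and takes essentially the same route as the paper: well-definedness of $\epsilon$ via the identification of $G/{\rm{ker}}(\alpha)$ with ${\rm{Gal}}(L/k)$ together with $\alpha \circ \gamma = {\rm{res}}^{F/k}_{L/k}$ and $\alpha \circ \delta = {\rm{res}}^{F_2/k(T_1)}_{L/k}$, injectivity from $FF_2$ being the compositum, surjectivity from the linear disjointness of $F(T_1)$ and $F_2$ over $L(T_1)$ (your order count is just a repackaging of this), and the inclusion-plus-cardinality argument for (2). The only blemish is notational: $F_2$ is already a Galois extension of $k(T_1)$, so the relevant fields and degrees are $F(T_1)$, $F_2$, $[F_2:L(T_1)]$ rather than ``$F_2(T_1)$'' and ``$[F_2:L]$''; this does not affect the argument.
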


\begin{proof}
Let ${\theta} : {\rm{Gal}}(L/k) \rightarrow G/{\rm{ker}}(\alpha)$ be the unique isomorphism satisfying ${\theta}  \circ \alpha = \varphi$. As
$$ \varphi \circ \gamma \circ {\rm{res}}^{FF_2/k(T_1)}_{F/k}= \theta \circ \alpha \circ \gamma \circ {\rm{res}}^{FF_2/k(T_1)}_{F/k} = \theta \circ {\rm{res}}^{F/k}_{L/k} \circ {\rm{res}}^{FF_2/k(T_1)}_{F/k} = \theta \circ {\rm{res}}^{FF_2/k(T_1)}_{L/k},$$
$$\varphi \circ \delta \circ {\rm{res}}^{FF_2/k(T_1)}_{F_2/k(T_1)}= \theta \circ \alpha \circ \delta \circ {\rm{res}}^{FF_2/k(T_1)}_{F_2/k(T_1)} = \theta \circ {\rm{res}}_{L/k}^{F_2/k(T_1)} \circ {\rm{res}}^{FF_2/k(T_1)}_{F_2/k(T_1)} = \theta \circ {\rm{res}}^{FF_2/k(T_1)}_{L/k},$$
$\epsilon$ is well-defined. Moreover, $\epsilon$ is clearly injective and, since $F_2 \cap \overline{k} = L$, the fields $F(T_1)$ and $F_2$ are linearly disjoint over $L(T_1)$, thus showing that $\epsilon$ is also surjective. Hence, (1) holds. As for (2), for $\sigma \in {\rm{Gal}}(FF_2/L(T_1))$, we have $\alpha \circ \gamma \circ {\rm{res}}^{FF_2/k(T_1)}_{F/k}(\sigma) = {\rm{res}}^{F/k}_{L/k} \circ {\rm{res}}^{FF_2/k(T_1)}_{F/k}(\sigma) = {\rm{id}}_{{\rm{Gal}}(L/k)}$ and, similarly, $\alpha \circ  \delta \circ {\rm{res}}^{FF_2/k(T_1)}_{F_2/k(T_1)}(\sigma)={\rm{id}}_{{\rm{Gal}}(L/k)}$, thus showing $\epsilon({\rm{Gal}}(FF_2/L(T_1))) \subseteq {\rm{ker}}(\alpha) \times {\rm{ker}}(\alpha)$. Since both groups have the same order, we are done.
\end{proof}

Now, consider the finite embedding problem $\delta^{-1} \circ \pi_2 : G_\varphi^2 \rightarrow {\rm{Gal}}(F_2/k(T_1))$ over $k(T_1)$. Then $\delta^{-1} \circ \pi_2$ splits and its kernel $N_2$ is abelian. Moreover, the isomorphism $\epsilon$ from Lemma \ref{lemmafin1}(1) is a solution to $\delta^{-1} \circ \pi_2$. Consequently, we may use Proposition \ref{prop:split} to lift the solution $\epsilon$ to a geometric solution. Namely, there exists a geometric solution $\beta : {\rm{Gal}}(E/k(T_1)(T_2)) \rightarrow G_\varphi^2$ to $\delta^{-1} \circ \pi_2$ with $E \cap \overline{k(T_1)} = F_2$, such that the specialization $E_0/k(T_1)$ of $E/k(T_1)(T_2)$ at 0 equals $FF_2/k(T_1)$, and such that the specialization of $\beta$ at $0$ is $\epsilon$.

\begin{lemma} \label{lemmafin4}
{\rm{(1)}} We have $E^{\beta^{-1}(N_1)} \cap \overline{k(T_1)} = L(T_1)$.

\vspace{0.5mm}

\noindent
{\rm{(2)}} The specialization $(E^{\beta^{-1}(N_1)})_0/k(T_1)$ of $E^{\beta^{-1}(N_1)}/k(T_1)(T_2)$ at $0$ equals $F(T_1)/k(T_1)$.
\end{lemma}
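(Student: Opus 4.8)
The plan is to transport the group-theoretic relations inside $G_\varphi^2$ — namely the normal subgroups $N_1 = \{1\} \times {\rm{ker}}(\alpha)$, $N_2 = {\rm{ker}}(\alpha) \times \{1\}$, their product $N_1 N_2 = {\rm{ker}}(\alpha) \times {\rm{ker}}(\alpha)$, and the quotient $G_\varphi^2/(N_1 N_2) \cong {\rm{Gal}}(L/k)$ — to the fields $E$, $F_2$, $FF_2$ via the isomorphisms $\beta$, $\delta$, $\epsilon$. First I would record two compatibilities. Since $\beta$ solves $(\delta^{-1}\circ\pi_2)_{k(T_1)(T_2)}$, one has $\pi_2\circ\beta = \delta\circ{\rm{res}}^{E/k(T_1)(T_2)}_{F_2/k(T_1)}$, and since $\delta$ is a geometric solution to $\alpha$ this yields $\alpha\circ\pi_2\circ\beta = {\rm{res}}^{E/k(T_1)(T_2)}_{L/k}$; because $\alpha\circ\pi_1 = \alpha\circ\pi_2$ on $G_\varphi^2$ (both equal $\theta^{-1}\circ\varphi\circ\pi_i$, with $\theta$ as in the proof of Lemma \ref{lemmafin1}), also $\alpha\circ\pi_1\circ\beta = {\rm{res}}^{E/k(T_1)(T_2)}_{L/k}$. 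From $E \supseteq F_2(T_2)\supseteq L(T_1)(T_2)$ and the first displayed identity I would also get ${\rm{Gal}}(E/F_2(T_2)) = \beta^{-1}(N_2)$.

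For part (1): every $\sigma\in\beta^{-1}(N_1)$ satisfies ${\rm{res}}^{E/k(T_1)(T_2)}_{L/k}(\sigma) = \alpha(\pi_1(\beta(\sigma))) = 1$ (as $\pi_1(N_1) = \{1\}$) and fixes $T_1$, so $L(T_1) \subseteq E^{\beta^{-1}(N_1)}$; since also $L(T_1) \subseteq \overline{k(T_1)}$ and $L(T_1) \subseteq F_2$, this gives the inclusion ``$\supseteq$''. For the reverse inclusion I would compute, inside $E$ and using $E^{\beta^{-1}(N_2)} = F_2(T_2)$, that $E^{\beta^{-1}(N_1)} \cap F_2(T_2) = E^{\beta^{-1}(N_1)} \cap E^{\beta^{-1}(N_2)} = E^{\beta^{-1}(N_1 N_2)}$; the latter field contains $L(T_1)(T_2)$ (same argument, now with $\pi_1(N_1 N_2) = {\rm{ker}}(\alpha)$, so $\beta^{-1}(N_1N_2)$ fixes $L$) and has degree $[G_\varphi^2 : N_1 N_2] = [L:k]$ over $k(T_1)(T_2)$, hence equals $L(T_1)(T_2)$. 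Since $E \cap \overline{k(T_1)} = F_2$ and $F_2 \subseteq \overline{k(T_1)}$, it follows that $E^{\beta^{-1}(N_1)} \cap \overline{k(T_1)} = E^{\beta^{-1}(N_1)} \cap F_2 \subseteq L(T_1)(T_2) \cap \overline{k(T_1)} = L(T_1)$, the last equality holding because $L(T_1)$ is algebraically closed in $L(T_1)(T_2)$. Combining the two inclusions proves (1).

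For part (2): since $E_0 = FF_2$ is Galois over $k(T_1)$ of degree $[FF_2:k(T_1)] = |G_\varphi^2| = [E:k(T_1)(T_2)]$, the map $\varphi_0 : {\rm{Gal}}(E/k(T_1)(T_2)) \to {\rm{Gal}}(FF_2/k(T_1))$ is an isomorphism, $\langle T_2 \rangle$ is unramified in $E/k(T_1)(T_2)$, and $\beta = \epsilon \circ \varphi_0$. I would then establish the following instance of ``specialization commutes with fixed fields'': for any $H \leq {\rm{Gal}}(E/k(T_1)(T_2))$, the specialization $(E^H)_0$ of $E^H/k(T_1)(T_2)$ at $0$ equals $(FF_2)^{\varphi_0(H)}$. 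One inclusion is immediate, since the reduction modulo the relevant prime of an $H$-fixed integral element is $\varphi_0(H)$-fixed. For the other, I would use that $E_0/k(T_1)$ having full degree forces the existence of a \emph{single} prime $\mathfrak{P}$ of the integral closure $B$ of $k(T_1)[T_2]$ in $E$ above $\langle T_2\rangle$, with ramification index $1$; letting $B^H$ be the integral closure of $k(T_1)[T_2]$ in $E^H$, multiplicativity of ramification indices in the tower $k(T_1)[T_2] \subseteq B^H \subseteq B$ forces the unique prime of $B^H$ above $\langle T_2\rangle$ to have ramification index $1$ as well, so $[(E^H)_0 : k(T_1)] = [E^H:k(T_1)(T_2)] = [{\rm{Gal}}(E/k(T_1)(T_2)):H] = [(FF_2)^{\varphi_0(H)}:k(T_1)]$, whence equality. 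Taking $H = \beta^{-1}(N_1)$ and noting $\varphi_0(\beta^{-1}(N_1)) = \epsilon^{-1}(N_1) = {\rm{Gal}}(FF_2/F(T_1))$ — because $\epsilon(\sigma) \in N_1$ iff $\gamma \circ {\rm{res}}^{FF_2/k(T_1)}_{F/k}(\sigma) = 1$, i.e.\ iff $\sigma$ fixes $F$ — I conclude $(E^{\beta^{-1}(N_1)})_0 = (FF_2)^{{\rm{Gal}}(FF_2/F(T_1))} = F(T_1)$.

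The delicate step is the last one: as the authors stress earlier, unramifiedness in their residue-field sense need not pass to subextensions, so one cannot simply read off $(E^H)_0$ from the Galois correspondence over $k(T_1)(T_2)$; the ramification-index multiplicativity argument circumvents this, using only that the ramification index over $\langle T_2\rangle$ equals $1$ in the ambient extension $E$ and hence divides into $1$ in any intermediate layer. Everything else is bookkeeping with the isomorphisms $\beta,\delta,\epsilon$ and with the degree formulas for $G_\varphi^2$, $N_1$, $N_2$.
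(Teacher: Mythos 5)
Your proof is correct, and its overall architecture coincides with the paper's: transport $N_1$, $N_2$, $N_1N_2$ through $\beta$, $\epsilon$, $\varphi_0$ and conclude by the Galois correspondence. The differences lie in the two steps where the paper is terse. For (1), the paper obtains $L(T_1,T_2)\subseteq E^{\beta^{-1}(N_1)}$ by computing ${\rm{Gal}}(E/L(T_1,T_2)) = \beta^{-1}({\rm{ker}}(\alpha)\times{\rm{ker}}(\alpha))$ from Lemma \ref{lemmafin1}(2) together with $\beta\circ\varphi_0^{-1}=\epsilon$, whereas you use the compatibilities $\alpha\circ\pi_1\circ\beta = \alpha\circ\pi_2\circ\beta = {\rm{res}}^{E/k(T_1)(T_2)}_{L/k}$ and then pin down $E^{\beta^{-1}(N_1N_2)} = L(T_1)(T_2)$ by a degree count; the ``$\subseteq$'' halves are identical (intersect with $F_2(T_2) = E^{\beta^{-1}(N_2)}$ and use the correspondence). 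For (2), the paper reduces to ${\rm{Gal}}(E_0/(E^{\beta^{-1}(N_1)})_0) = \varphi_0\circ\beta^{-1}(N_1)$ and asserts this ``by the definition of $\varphi_0$''; you instead prove the compatibility of specialization with fixed fields explicitly, via the easy inclusion $(E^H)_0 \subseteq (E_0)^{\varphi_0(H)}$ plus the fundamental identity $\sum_i e_if_i = n$ over the Dedekind domain $k(T_1)[T_2]$: the full-degree specialization forces a unique prime with $e=1$, multiplicativity of ramification indices gives $e=1$ for the prime of $E^{H}$, whence $[(E^{H})_0 : k(T_1)] = [E^{H}:k(T_1)(T_2)]$ and equality follows. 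This buys a self-contained justification of the step the paper leaves implicit (and, as you note, it sidesteps the worry that the paper's discriminant notion of unramifiedness need not pass to subextensions), at the modest cost of invoking one-variable ramification theory; the paper's shortcut is legitimate here precisely because the specialization of $E$ at $0$ has full degree, which is also the fact your argument exploits.
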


\begin{proof}
Let $\varphi_{0} : {\rm{Gal}}(E/k(T_1)(T_2)) \rightarrow {\rm{Gal}}(E_{0}/k(T_1))$ be the isomorphism defined in \eqref{eqvarphi}. 

\vspace{1mm}

\noindent
(1) By the definition of $\varphi_0$, we have
$$\varphi_{0}({\rm{Gal}}(E/L(T_1)(T_2))) = {\rm{Gal}}(E_{0}/L(T_1)) = {\rm{Gal}}(FF_2/L(T_1))  = \epsilon^{-1}({\rm{ker}}(\alpha) \times {\rm{ker}}(\alpha)),$$
the last equality being Lemma \ref{lemmafin1}(2). As $\beta \circ \varphi_0^{-1} = \epsilon$, we get ${\rm{Gal}}(E/L(T_1,T_2)) = \beta^{-1}({\rm{ker}}(\alpha) \times {\rm{ker}}(\alpha))$ and, as $N_1 \subseteq {\rm{ker}}(\alpha) \times {\rm{ker}}(\alpha)$, we get $L(T_1, T_2) \subseteq E^{\beta^{-1}(N_1)}$. In particular, $L(T_1) \subseteq E^{\beta^{-1}(N_1)} \cap \overline{k(T_1)}$. 

For the converse, the equality $E \cap \overline{k(T_1)} = F_2$ yields $E^{\beta^{-1}(N_1)} \cap \overline{k(T_1)} = E^{\beta^{-1}(N_1)} \cap F_2.$ Hence, $(E^{\beta^{-1}(N_1)} \cap \overline{k(T_1)})(T_2)$ is a subfield of
$$E^{\beta^{-1}(N_1)} \cap F_2(T_2)= E^{\beta^{-1}(N_1)} \cap E^{\beta^{-1}(N_2)} {=}  E^{\langle \beta^{-1}(N_1), \beta^{-1}(N_2)\rangle} = E^{\beta^{-1}({\rm{ker}}(\alpha) \times {\rm{ker}}(\alpha))},$$
i.e., a subfield of $L(T_1, T_2)$. We then have $E^{\beta^{-1}(N_1)} \cap \overline{k(T_1)} \subseteq L(T_1)$.

\vspace{1.5mm}

\noindent
(2) By the definition of $\epsilon$, we have $F(T_1)= (FF_2)^{\epsilon^{-1}(N_1)} = (E_{0})^{\epsilon^{-1}(N_1)} = (E_0)^{\varphi_0 \circ \beta^{-1}(N_1)}.$ Hence, $(E^{\beta^{-1}(N_1)})_0 = F(T_1)$ if and only if $(E^{\beta^{-1}(N_1)})_{0} = (E_0)^{\varphi_{0} \circ \beta^{-1}(N_1)},$ i.e., if and only if ${\rm{Gal}}(E_{0}/(E^{\beta^{-1}(N_1)})_{0}) = {\varphi_{0} \circ \beta^{-1}(N_1)}$. As the latter equality holds by the definition of $\varphi_{0}$, we have $(E^{\beta^{-1}(N_1)})_0 = F(T_1)$, as needed.
\end{proof}

Next, as in the fourth part of the proof of Lemma \ref{prop:2}, we specialize $T_2$ suitably. By Lemma \ref{lemmafin4}(2), the ideal $\langle T_2 \rangle$ is unramified in $E^{\beta^{-1}(N_1)}/L(T_1)(T_2)$. Then, as $L(T_1)$ is infinite, there exists a primitive element of $E^{\beta^{-1}(N_1)}$ over $L(T_1)(T_2)$ which is integral over $L(T_1)[T_2]$ and whose minimal polynomial $P(T_2,Y) \in L(T_1)[T_2][Y]$ is such that $P(0,Y) \in L(T_1)[Y]$ is separable. Krasner's lemma and the ``continuity of roots" (see, e.g., \cite[Proposition 12.3]{Jar91}), applied to the separable polynomial $P(0,Y)$ over the complete valued field $L((T_1))$, yield a positive integer $N$ such that, for every $t_2(T_1) \in L(T_1)$ of valuation at least $N$ with respect to $\langle T_1 \rangle$, the splitting fields of $P(0,Y)$ and $P(t_2(T_1), Y)$ over $L((T_1))$ coincide.

Finally, by Lemma \ref{lemmafin4}(1), we have $E^{\beta^{-1}(N_1)} \cap \overline{k} = L$. Hence, $P(T_2,Y)$ is irreducible over $\overline{k}(T_1)(T_2)$. In particular, $P(T_1^N T_2, Y)$ and $P(T_1^N T_2^{-1}, Y)$ are also irreducible over $\overline{k}(T_1)(T_2)$. Then apply either \cite[Proposition 13.2.1]{FJ08} if $k$ is infinite or \cite[Theorem 13.4.2 and Proposition 16.11.1]{FJ08} if $k$ is finite to get the existence of $t_2(T_1) \in k(T_1)$ such that $P(T_1^N t_2(T_1), Y)$ $\in L(T_1)[Y]$ and $P(T_1^N t_2(T_1)^{-1}, Y) \in L(T_1)[Y]$ are irreducible over $\overline{k}(T_1)$ and separable. Without loss, we may assume that $t_2(T_1)$ is of non-negative valuation with respect to $\langle T_1 \rangle$. Let $M$ be the field generated over $L(T_1)$ by one root of $P(T_1^N t_2(T_1), Y)$. As the latter is irreducible over $L(T_1)$, one has $[M:k(T_1)] = [E^{\beta^{-1}(N_1)}: k(T_1)(T_2)]$ and, consequently, the fields $M$ and $(E^{\beta^{-1}(N_1)})_{t_2(T_1)}$ coincide. It then remains to use the irreducibility of $P(T_1^N t_2(T_1), Y)$ over $\overline{k}(T_1)$ to get $(E^{\beta^{-1}(N_1)})_{t_2(T_1)} \cap \overline{k} = L$. Moreover, since $P(T_1^N t_2(T_1), Y)$ is separable, its splitting field over $L(T_1)$ equals $(E^{\beta^{-1}(N_1)})_{t_2(T_1)}$. Similarly, as $P(0,Y)$ is se\-pa\-rable, its splitting field over $L(T_1)$ equals $(E^{\beta^{-1}(N_1)})_{0}$, i.e., equals $F(T_1)$ (see Lemma \ref{lemmafin4}(2)). Hence, from the previous paragraph, the completion of $(E^{\beta^{-1}(N_1)})_{t_2(T_1)}$ with respect to any prime extending $\langle T_1 \rangle$ equals $F((T_1))$. In particular, the specialization $((E^{\beta^{-1}(N_1)})_{t_2(T_1)})_0/k$ of $(E^{\beta^{-1}(N_1)})_{t_2(T_1)}/k(T_1)$ at $0$ equals $F/k$. It then remains to apply Lemma \ref{lemma:spec_ffe2} to conclude the proof of Theorem \ref{thm:intro_2}.

\subsection{Proof of Theorem \ref{thm:intro_3}} \label{ssec:proof_2.3}

We actually have the following more precise consequence:

\begin{corollary} \label{thm0}
Let $k$ be an arbitrary field, $G$ a finite group, and $F/k$ a Galois extension of group $G$. Assume $G$ has a non-trivial solvable normal subgroup. Then there exist $t_0 \in k$ and a Galois field extension $E/k(T)$ of group $G$, with $E \not \subset \overline{k}(T)$, and such that $F/k = E_{t_0}/k$.
\end{corollary}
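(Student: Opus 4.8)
The plan is to deduce Corollary \ref{thm0} directly from Theorem \ref{thm:intro_2}, the only preliminary point being to manufacture from the given solvable normal subgroup a finite embedding problem with \emph{abelian} kernel, to which Theorem \ref{thm:intro_2} applies. First I would replace the given non-trivial solvable normal subgroup $N$ of $G$ by a non-trivial \emph{abelian} one: the derived series of $N$ reaches $\{1\}$ after finitely many steps, so its last non-trivial term $A$ is characteristic in $N$, hence normal in $G$, and is non-trivial and abelian. Thus I may assume from now on that $A$ is a non-trivial abelian normal subgroup of $G$.

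Next I would set up the embedding problem. Fix an isomorphism $\gamma \colon {\rm{Gal}}(F/k) \to G$, set $L = F^{\gamma^{-1}(A)}$ (so that $L/k$ is Galois, $A$ being normal in $G$), and let $\alpha \colon G \to {\rm{Gal}}(L/k)$ be the unique homomorphism with $\alpha \circ \gamma = {\rm{res}}^{F/k}_{L/k}$. Since $\gamma$ is an isomorphism and $\ker\bigl({\rm{res}}^{F/k}_{L/k}\bigr) = {\rm{Gal}}(F/L) = \gamma^{-1}(A)$, the map $\alpha$ is an epimorphism with ${\rm{ker}}(\alpha) = A$ abelian; that is, $\alpha$ is a finite embedding problem with abelian kernel over $k$, and $\gamma$ is, by construction, a solution to $\alpha$.

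Now I would invoke Theorem \ref{thm:intro_2}: it provides $t_0 \in k$ and a geometric solution $\beta \colon {\rm{Gal}}(E/k(T)) \to G$ to $\alpha$ with $E \cap \overline{k} = L$, $E_{t_0}/k = F/k$, and $\beta_{t_0} = \gamma$. Since $\beta$ is an isomorphism, $E/k(T)$ is Galois of group $G$, and $E_{t_0}/k = F/k$ is exactly the specialization required. It remains to check $E \not\subseteq \overline{k}(T)$. As $\beta$ solves $\alpha_{k(T)}$, one has ${\rm{Gal}}(E/L(T)) = \beta^{-1}(A)$, hence $[E : L(T)] = |A| \geq 2$; and since $E/k(T)$ is Galois with field of constants $E \cap \overline{k} = L$, the extension $\overline{k}E/\overline{k}(T)$ is Galois of degree $[E : L(T)] \geq 2$, so $\overline{k}E \neq \overline{k}(T)$. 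If $E \subseteq \overline{k}(T)$ held, then $\overline{k}E \subseteq \overline{k}(T)$, while $T \in L(T) \subseteq E$ forces $\overline{k}(T) \subseteq \overline{k}E$, whence $\overline{k}E = \overline{k}(T)$, contradicting the degree being at least $2$. Thus $E \not\subseteq \overline{k}(T)$, completing the proof; Theorem \ref{thm:intro_3} is the special case $N = G$.

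I do not expect a serious obstacle here: essentially all the difficulty has been absorbed into Theorem \ref{thm:intro_2} (ultimately Theorem \ref{thm:intro_1}). The only real design choice is the reduction in the first step to an abelian normal subgroup, needed so that Theorem \ref{thm:intro_2} is applicable, and the only genuine (and short) computation is the argument that $E \not\subseteq \overline{k}(T)$, which hinges solely on $|A| > 1$.
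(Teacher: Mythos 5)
Your proposal is correct and follows essentially the same route as the paper: pass to the last non-trivial term of the derived series of the solvable normal subgroup to get a non-trivial abelian normal subgroup $A$, form the embedding problem $G \rightarrow {\rm{Gal}}(F^{A}/k)$ with kernel $A$ solved by the given isomorphism, and apply Theorem \ref{thm:intro_2}. The only (immaterial) difference is in the final non-constancy check, where the paper deduces $E \not\subseteq \overline{k}(T)$ directly from $[E \cap \overline{k}:k]=|G/A| < |G|$ while you argue via the degree of $\overline{k}E/\overline{k}(T)$.
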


\begin{proof}
By the assumption, $G$ has a non-trivial solvable normal subgroup $H$. The smallest non-trivial term $H'$ of the derived series of $H$ then is an abelian characteristic subgroup of $H$, and so is a non-trivial abelian normal subgroup of $G$. Let $\theta : G \rightarrow {\rm{Gal}}(F/k)$ be an isomorphism. Consider the finite embedding problem 
$$\alpha={\rm{res}}^{F/k}_{F^{\theta(H')}/k} \circ \theta : G \rightarrow {\rm{Gal}}(F^{\theta(H')}/k)$$
over $k$; it has kernel $H'$, which is abelian, and $\theta^{-1} : {\rm{Gal}}(F/k) \rightarrow G$ is a solution. Theorem \ref{thm:intro_2} then yields $t_0 \in k$ and a geometric solution ${\rm{Gal}}(E/k(T)) \rightarrow G$ to $\alpha$ with $E_{t_0}/k = F/k$ and $E \cap \overline{k}= F^{\theta(H')}$. From the latter, we have $[E \cap \overline{k}:k]=|G/H'|$ and, as $H'$ is not trivial, this implies $E \not \subset \overline{k}(T)$, thus concluding the proof.
\end{proof}

\begin{remark}
(1) For $n \geq 1$, let $D_{2^n}$ be the dihedral group of cardinality $2^n$. The Beckmann--Black problem for $D_{2^n}$ over $\Qq$ is known to have a positive answer only for $n \leq 5$ (see \cite[th\'eor\`eme 2.2]{Deb01b} for references), and no counter-example is known. Our method gives that, for every field $k$, every $n \geq 1$, and every Galois extension $F/k$ of group $D_{2^n}$, there are $t_0 \in k$ and a Galois extension $E/k(T)$ of group $D_{2^n}$, with $[E \cap \overline{k}:k]=2$, and such that $F/k = E_{t_0}/k$.

\vspace{1mm}

\noindent
(2) Say that a finite group $G$ is a {\it{non-constant Galois group}} over a field $k$ if there exists a Galois field extension $E/k(T)$ of group $G$ with $E \not \subset \overline{k}(T)$. Corollary \ref{thm0} allows to reobtain that, if a finite group $G$ is the Galois group of a Galois field extension of $k$ and if $G$ has a non-trivial solvable normal subgroup, then $G$ is a non-constant Galois group over $k$.
\end{remark}

\section{Inverse Galois theory over division rings} \label{sec:tiganoco}

The present section is organized as follows. In \S\ref{ssec:tiganoco_1}, we collect some material from previous papers about division rings and finite embedding problems in this context. We then prove Theorem \ref{thm:intro_4} in \S\ref{ssec:tiganoco_2}. Finally, in \S\ref{ssec:tiganoco_3}, we present our result about finite embedding problems with abelian kernels over division rings alluded to at the end of \S\ref{sssec:intro_2.2}.

\subsection{Preliminaries} \label{ssec:tiganoco_1}

We refer to \cite{DL20, Beh21, BDL20f} for more on the following.

\subsubsection{Division rings} \label{sssec:tiganoco_1.1}

Let $H \subseteq L$ be division rings. The group of all automorphisms of $L$ fixing $H$ pointwise is the {\it{automorphism group}} ${\rm{Aut}}(L/H)$ of $L/H$. Following Artin, we say that $L/H$ is {\it{Galois}} if each element of $L$ which is fixed under all elements of ${\rm{Aut}}(L/H)$ is in $H$. If $L/H$ is Galois, ${\rm{Aut}}(L/H)$ is the {\it{Galois group}} ${\rm{Gal}}(L/H)$ of $L/H$.

The following theorem, which is \cite[corollaire 2]{DL20}, will be used implicitly in the sequel:

\begin{theorem} \label{thm:DL}
Let $L/H$ be a Galois extension of division rings with finite Galois group. Assume $H$ is of finite dimension over its center $h$ and let $\ell$ denote the center of $L$. Then $\ell$ is a Galois field extension of $h$ and ${\rm{Gal}}(\ell/h) \cong {\rm{Gal}}(L/H)$.
\end{theorem}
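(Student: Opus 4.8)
The plan is to reduce the statement to the classical Artin lemma for fields; the real work is to show that $h$ sits inside $\ell$ and, more precisely, that $L=H\otimes_h\ell$. Write $\Gamma={\rm{Gal}}(L/H)={\rm{Aut}}(L/H)$; by hypothesis $\Gamma$ is finite and $L^{\Gamma}=H$. First I would gather the finiteness facts. By the Artin--Jacobson theory of Galois extensions of division rings (see \cite{Jac64,Coh95}), $L$ is a finite-dimensional left $H$-vector space, hence $\dim_h L=\dim_H L\cdot\dim_h H<\infty$. A division ring that is finite-dimensional over a subfield is finite-dimensional over its center: right multiplications give an embedding $L^{\mathrm{op}}\hookrightarrow{\rm{End}}_h(L)\cong M_N(h)$ with $N=\dim_h L$, so $L$ is a PI-ring and Kaplansky's theorem applies. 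Thus $L$ is a central simple $\ell$-algebra. If $\ell$ is finite, then $L$ is a finite division ring, hence a field by Wedderburn; then $H$ is a field too, $\ell=L$, $h=H$, and the statement is the classical one. So I may assume $\ell$ is infinite from now on.

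The crucial step is the identity $C_L(H)=\ell$. Since the center is a characteristic subring, $\Gamma$ acts on $\ell$, and it fixes $h\subseteq H$ pointwise, giving a homomorphism $\Gamma\to{\rm{Aut}}(\ell)$; call its kernel $K$. By the Skolem--Noether theorem an automorphism of $L$ that is the identity on $\ell=Z(L)$ is inner, so $K$ consists precisely of the conjugations by units of $C_L(H)$, whence $K\cong C_L(H)^{*}/\ell^{*}$. As $K\subseteq\Gamma$ is finite while $\ell$ is infinite, and as $C_L(H)$ is a sub-division-ring of $L$ containing $\ell$, it must be that $C_L(H)=\ell$: any $\alpha\in C_L(H)\setminus\ell$ would already produce infinitely many classes $\alpha-c$ ($c\in\ell$) in $C_L(H)^{*}/\ell^{*}$. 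In particular $h=Z(H)\subseteq C_L(H)=\ell$. Now $\ell$ commutes with $H$ and $H$ is central simple over $h\subseteq\ell$, so the multiplication map $H\otimes_h\ell\to L$ is an injection of $\ell$-algebras with image the sub-division-ring $H\ell$, which is central simple over $\ell$; the double centralizer theorem inside the central simple $\ell$-algebra $L$ then gives $C_L(H\ell)=C_L(H)=\ell$ and $H\ell=C_L(C_L(H\ell))=C_L(\ell)=L$. Hence $L\cong H\otimes_h\ell$.

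It remains to assemble everything. An element of $\Gamma$ fixing both $\ell$ and $H$ pointwise fixes $L=H\ell$ pointwise, so the action $\Gamma\to{\rm{Aut}}(\ell)$ is faithful, i.e. $K=1$. Its fixed field is $\ell^{\Gamma}=\ell\cap L^{\Gamma}=\ell\cap H$, and $\ell\cap H=h$: on the one hand $h\subseteq\ell$ and $h\subseteq H$; on the other hand any element of $H$ lying in $\ell$ is central in $H$, so lies in $Z(H)=h$. Thus $\Gamma$ is a finite group acting faithfully on the field $\ell$ with fixed field $h$; by Artin's lemma $\ell/h$ is a finite Galois extension and the restriction map $\Gamma\to{\rm{Gal}}(\ell/h)$ is an isomorphism. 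Combining with ${\rm{Gal}}(L/H)=\Gamma$ gives ${\rm{Gal}}(\ell/h)\cong{\rm{Gal}}(L/H)$, as desired.

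The main obstacle is the step $C_L(H)=\ell$ --- equivalently $L=H\otimes_h\ell$, and in particular $h\subseteq\ell$. This is the only place where the finite-dimensionality hypothesis on $H$ is genuinely used (through Skolem--Noether and the double centralizer theorem), and it is what rules out pathologies such as $Z(H)$ failing to be central in $L$; once it is in place, the remainder is ordinary commutative Galois theory.
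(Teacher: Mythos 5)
The paper itself contains no proof of this statement: Theorem \ref{thm:DL} is imported verbatim from \cite[corollaire 2]{DL20}, so there is no internal argument to compare yours against; I can only assess your proof on its own terms, and it is correct. The one genuinely non-elementary input you quote without proof is the Jacobson--Cohn theorem that a finite group of automorphisms of a division ring has fixed division ring of finite index, i.e.\ $[L:H]<\infty$; that is indeed the standard fact to cite from \cite{Jac64, Coh95}, and everything after it checks out. The PI/Kaplansky step legitimately upgrades $\dim_h L<\infty$ to $\dim_\ell L<\infty$, the Wedderburn reduction disposes of the case of finite $\ell$, and the key computation $C_L(H)=\ell$ is sound: since ${\rm{Gal}}(L/H)$ is by definition the full group ${\rm{Aut}}(L/H)$, conjugation by any unit of $C_L(H)$ does lie in the kernel $K$ of $\Gamma\to{\rm{Aut}}(\ell)$, Skolem--Noether (valid now that $L$ is finite-dimensional central simple over $\ell$) gives the reverse inclusion, so $K\cong C_L(H)^{*}/\ell^{*}$, and your observation that the classes of $\alpha-c$, $c\in\ell$, are pairwise distinct forces $C_L(H)=\ell$ when $\ell$ is infinite. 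From there $h=Z(H)\subseteq\ell$, the double centralizer theorem applied to the simple subalgebra $H\ell\cong H\otimes_h\ell$ of $L$ yields $L=H\ell$, hence the action of $\Gamma$ on $\ell$ is faithful with fixed field $\ell\cap H=h$, and Artin's lemma gives that $\ell/h$ is Galois with group isomorphic to ${\rm{Gal}}(L/H)$. This passage through the structural identity $L\cong H\otimes_h\ell$ is very much in the spirit of the result of \cite{DL20} that the paper relies on, but your coset-counting shortcut to $C_L(H)=\ell$ makes the argument pleasantly self-contained, needing only Skolem--Noether, the double centralizer theorem, Kaplansky's PI theorem and the classical finiteness theorem cited above.
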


A ring $R \not= \{0\}$ with $ab \not=0$ for $a \not=0$ and $b \not=0$ is a {\it{right Ore domain}} if, for all $x \not=0$ and $y \not=0$, there are $r, s \in R$ with $xr = ys \not=0$. For a right Ore domain $R$, there is a division ring $H \supseteq R$ each element of which can be written as $ab^{-1}$ with $a \in R$, $b \in R \setminus \{0\}$ (see, e.g., \cite[Theo\-rem 6.8]{GW04}), and $H$ is unique up to isomorphism (see \cite[Proposition 1.3.4]{Coh95}). 

Let $H$ be a division ring and $\sigma$ an automorphism of $H$. We let $H[T, \sigma]$ be the ring of all polynomials $a_0 + a_1 T + \cdots + a_n T^n$ with $n \geq 0$ and $a_0, \dots, a_n \in H$, whose addition is defined componentwise and multiplication fulfills $Ta = \sigma(a) T$ for every $a \in H$. Note that $H[T, \sigma]$ is commutative if and only if $H$ is a field and $\sigma={\rm{id}}_H$. In the sense of Ore (see \cite{Ore33}), $H[T, \sigma]$ is the polynomial ring $H[T, \sigma, \delta]$ in the variable $T$, where the derivation $\delta$ is 0. The ring $H[T, \sigma]$ fulfills $ab \not=0$ for $a \in H[T, \sigma] \setminus \{0\}$ and $b \in H[T, \sigma] \setminus \{0\}$, as the degree is additive on products. Moreover, $H[T, \sigma]$ is a right Ore domain (see, e.g., \cite[Theorem 2.6 and Corollary 6.7]{GW04}). By $H(T, \sigma)$, we then mean the unique division ring containing $H[T, \sigma]$ and every element of which can be written as $ab^{-1}$ with $a \in H[T, \sigma]$ and $b \in H[T, \sigma] \setminus \{0\}$. If $\sigma={\rm{id}}_H$, we write $H[T]$ and $H(T)$ instead of $H[T, {\rm{id}}_H]$ and $H(T, {\rm{id}}_H)$, respectively. 

\subsubsection{Finite embedding problems} \label{sssec:tiganoco_1.2}

First, let $L/H$ and $F/M$ be Galois extensions of division rings with finite Galois groups, and such that the two inclusions $L \subseteq F$ and $H \subseteq M$ hold. We let ${\rm{res}}^{F/M}_{L/H}$ denote the res\-triction map ${\rm{Gal}}(F/M) \rightarrow {\rm{Gal}}(L/H)$ (i.e., ${\rm{res}}^{F/M}_{L/H}(\sigma)(x)=\sigma(x)$ for $\sigma \in {\rm{Gal}}(F/M)$ and $x \in L$), if it is well-defined. 

Unlike the field case, ${\rm{res}}^{F/M}_{L/H}$ is not always well-defined. The next proposition, which relies on the special cases III) and IV) of \cite[\S3.1]{BDL20f}, gives two situations where it is well-defined:

\begin{proposition} \label{prop:restriction}
Let $H$ be a division ring of finite dimension over its center $h$.

\vspace{0.5mm}

\noindent
{\rm{(1)}} Let $L/H$ and $F/H$ be two Galois extensions of division rings with finite Galois groups and such that $L \subseteq F$. Then ${\rm{res}}^{F/H}_{L/H}$ is well-defined.

\noindent
{\rm{(2)}} Let $\sigma$ be an automorphism of $H$ of finite order $m$, let $L/H$ be a Galois extension of division rings with finite Galois group, and let $\tau$ be an automorphism of $L$ of order $m$ extending $\sigma$. Denote the restriction of $\tau$ to the center $\ell$ of $L$ by $\widetilde{\tau}$, and assume 
$$\langle \widetilde{\tau}, {\rm{Gal}}(\ell/h) \rangle \cong \langle \widetilde{\tau} \rangle \times {\rm{Gal}}(\ell/h) \, \, \footnote{Note that ${\rm{Gal}}(\ell/h)$ is a well-defined (finite) group by Theorem \ref{thm:DL}.}.$$
Then $L(T, \tau) / H(T, \sigma)$ is Galois with finite Galois group and ${\rm{res}}^{L(T,\tau) / H(T, \sigma)}_{L/H}$ is a well-defined isomorphism.
\end{proposition}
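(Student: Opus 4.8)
The plan is to verify both assertions by reducing, via Theorem \ref{thm:DL}, to statements about the centers, which are honest commutative fields, and then to invoke the special cases III) and IV) of \cite[\S3.1]{BDL20f}. First I would set up notation: write $h$ for the center of $H$, and in part (1) let $\ell_L$, $\ell_F$ denote the centers of $L$, $F$. By Theorem \ref{thm:DL}, $\ell_L/h$ and $\ell_F/h$ are finite Galois extensions with ${\rm{Gal}}(\ell_L/h) \cong {\rm{Gal}}(L/H)$ and ${\rm{Gal}}(\ell_F/h) \cong {\rm{Gal}}(F/H)$, the isomorphisms being induced by restriction of automorphisms to the center. Since $L \subseteq F$, one has $\ell_L \subseteq \ell_F$ (an element of $\ell_L$ is central in $L$, hence in particular fixed by ${\rm{Gal}}(F/H)$ composed with the natural maps... — more carefully, one checks $h \subseteq \ell_L \cap \ell_F$ and $\ell_L \subseteq F$ is fixed by the appropriate subgroup). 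The point is that, under these center identifications, the map one wants, ${\rm{res}}^{F/H}_{L/H}$, corresponds to the ordinary field-theoretic restriction ${\rm{res}}^{\ell_F/h}_{\ell_L/h}$, which is certainly well-defined. The content to check is exactly that the correspondence is compatible: an automorphism of $F$ fixing $H$ restricts to an automorphism of $L$ fixing $H$, and this is compatible with passing to centers. This is where one appeals to case III) of \cite[\S3.1]{BDL20f}, which I would cite as handling precisely this ``tower of Galois extensions of $H$'' situation.

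For part (2), I would first recall the construction of $L(T,\tau)/H(T,\sigma)$: since $\tau$ extends $\sigma$ and has the same order $m$, the twisted polynomial ring $L[T,\tau]$ contains $H[T,\sigma]$, and passing to fraction fields gives $L(T,\tau) \supseteq H(T,\sigma)$. The hypothesis $\langle \widetilde\tau, {\rm{Gal}}(\ell/h)\rangle \cong \langle \widetilde\tau\rangle \times {\rm{Gal}}(\ell/h)$ inside ${\rm{Aut}}(\ell/h\text{-ish})$ is the precise condition under which case IV) of \cite[\S3.1]{BDL20f} applies; I would invoke that case to conclude that $L(T,\tau)/H(T,\sigma)$ is Galois with finite Galois group and that the restriction map to $L$ is well-defined. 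To see it is an \emph{isomorphism}, I would argue on orders: $|{\rm{Gal}}(L(T,\tau)/H(T,\sigma))|$ equals $|{\rm{Gal}}(L/H)|$ — the ``new'' relations coming from $T$ are killed because $\tau$ and $\sigma$ have the same order $m$, so $T^m$ is central in both $L(T,\tau)$ and $H(T,\sigma)$ and contributes nothing to the Galois group, precisely the mechanism exploited in \cite{Beh21, BDL20f}. Concretely, by Theorem \ref{thm:DL} applied to $L(T,\tau)/H(T,\sigma)$ the Galois group is isomorphic to ${\rm{Gal}}(z_F/z_H)$ where $z_F$, $z_H$ are the respective centers; one identifies $z_H = h(T^m)$ (the fixed field of $\langle\sigma\rangle$-action, with $T^m$ central) and $z_F = \ell^{\langle\widetilde\tau\rangle}(T^m)$, and the direct-product hypothesis forces $[z_F : z_H] = [\ell^{\langle\widetilde\tau\rangle}:h^{\dots}] = |{\rm{Gal}}(\ell/h)| = |{\rm{Gal}}(L/H)|$. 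Injectivity of the restriction is clear since any automorphism of $L(T,\tau)/H(T,\sigma)$ fixing $L$ must fix $T$ as well (up to the central $T^m$ ambiguity, which is already in $H(T,\sigma)$), hence is trivial; surjectivity then follows from equality of orders.

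The main obstacle I expect is \emph{not} the field-theoretic bookkeeping but rather making rigorous the claim that every automorphism in ${\rm{Gal}}(L(T,\tau)/H(T,\sigma))$ genuinely descends to an automorphism of $L/H$ — i.e. that an $H(T,\sigma)$-automorphism of $L(T,\tau)$ necessarily maps $L$ to $L$ and acts on $T$ in a controlled way. In the non-commutative setting this is exactly the subtlety that makes ${\rm{res}}^{F/M}_{L/H}$ fail to be well-defined in general, and it is the reason the hypothesis $\langle\widetilde\tau,{\rm{Gal}}(\ell/h)\rangle \cong \langle\widetilde\tau\rangle\times{\rm{Gal}}(\ell/h)$ is imposed. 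I would handle this by quoting cases III) and IV) of \cite[\S3.1]{BDL20f} as black boxes for the well-definedness and Galois-ness, and then supply the short order/injectivity argument above to upgrade ``well-defined homomorphism'' to ``isomorphism.'' The remaining care needed is purely to check that the hypotheses of those two special cases are literally met by our $H$, $L$, $\sigma$, $\tau$ — in particular that $H$ of finite dimension over $h$ is what cases III) and IV) require, and that ``$\tau$ of order $m$ extending $\sigma$'' together with the direct-product condition is the exact input of case IV).
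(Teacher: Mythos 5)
Your proposal takes essentially the same route as the paper: the paper offers no written proof of this proposition and simply attributes it to the special cases III) and IV) of \cite[\S3.1]{BDL20f}, which are exactly the two statements you invoke as black boxes. The additional glue you supply (passage to centers via Theorem \ref{thm:DL}, the order count for surjectivity) is not present in the paper and is not needed once case IV) is quoted — note only, in that extra material, that the center of $H(T,\sigma)$ is $h^{\langle \widetilde{\sigma} \rangle}(T^m)$ rather than $h(T^m)$.
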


Now, a {\it{finite embedding problem}} over a division ring $H$ of finite dimension over its center is an epimorphism $\alpha : G \rightarrow {\rm{Gal}}(L/H)$, with $G$ a finite group and $L/H$ a Galois extension. Say that $\alpha$ {\it{splits}} if there is an embedding $\alpha' : {\rm{Gal}}(L/H) \rightarrow G$ with $\alpha \circ \alpha' = {\rm{id}}_{{\rm{Gal}}(L/H)}$. A {\it{weak solution}} to $\alpha$ is a monomorphism $\beta : {\rm{Gal}}(F/H) \rightarrow G$, where $F/H$ is a Galois extension with $L \subseteq F$, such that $\alpha \circ \beta$ is the map ${\rm{res}}^{F/H}_{L/H}$ (which is well-defined by Proposition \ref{prop:restriction}(1)). If $\beta$ is an isomorphism, we say {\it{solution}} instead of weak solution.

\begin{remark} \label{rk:fep}
Let $L/H$ be a Galois extension of division rings with ${\rm{Gal}}(L/H)$ finite. Then $L$ is a field if and only if $H$ is a field (see \cite[lemme 2.1]{BDL20f}). In particular, the above terminology of finite embedding problems generalizes that from the commutative case recalled in \S\ref{ssec:bas_2}, and there is no possible confusion with it.
\end{remark}

Next, let $H$ be a division ring of finite dimension over its center $h$ and $\sigma$ an automorphism of $H$ of finite order $m$. Let $\widetilde{\sigma}$ be the restriction of $\sigma$ to $h$. Let $\alpha : G \rightarrow {\rm{Gal}}(L/H)$ be a finite embedding problem over $H$ and $\tau$ an automorphism of $L$ of order $m$ extending $\sigma$. Let $\widetilde{\tau}$ denote the restriction of $\tau$ to the center $\ell$ of $L$ and assume $\langle \widetilde{\tau}, {\rm{Gal}}(\ell/h) \rangle \cong \langle \widetilde{\tau} \rangle \times {\rm{Gal}}(\ell/h)$. Then, by Proposition \ref{prop:restriction}(2), $L(T, \tau) / H(T, \sigma)$ is Galois with finite Galois group and ${\rm{res}}^{L(T,\tau) / H(T, \sigma)}_{L/H}$ is a well-defined isomorphism. Hence, 
$$\alpha_{\sigma, \tau} = ({\rm{res}}^{L(T, \tau)/H(T, \sigma)}_{L/H})^{-1} \circ \alpha : G \rightarrow {\rm{Gal}}(L(T, \tau)/H(T, \sigma))$$
is a finite embedding problem over $H(T, \sigma)$. A {\it{$(\sigma, \tau)$-geometric solution}} to $\alpha$ is a solution ${\rm{Gal}}(E/H(T, \sigma)) \rightarrow G$ to $\alpha_{\sigma, \tau}$. If $\tau={\rm{id}}_L$ (and so $\sigma={\rm{id}}_H$), we say {\it{geometric solution}} instead of $({\rm{id}}_H, {\rm{id}}_L)$-geometric solution. As in Remark \ref{rk:fep}, $H(T, \sigma)$ is a field if and only if $E$ is.

Moreover, we let $h^{\langle \widetilde{\sigma} \rangle}$ (resp., $\ell^{\langle \widetilde{\tau} \rangle}$) denote the field which consists of all elements of $h$ (resp., of $\ell$) which are fixed under $\widetilde{\sigma}$ (resp., under $\widetilde{\tau}$). By \cite[lemmes 2.4 \& 3.5]{BDL20f}, $\ell^{\langle \widetilde{\tau} \rangle}$ is a Galois field extension of $h^{\langle \widetilde{\sigma} \rangle}$ and the (usual) restriction map ${\rm{res}}^{\ell/h}_{\ell^{\langle \widetilde{\tau} \rangle} / h^{\langle \widetilde{\sigma} \rangle}}$ is an isomorphism. Furthermore, the special case II) of \cite[\S3.1]{BDL20f} shows that the restriction map ${\rm{res}}^{L/H}_{\ell/h}$ is a well-defined isomorphism. We may then consider this finite embedding problem over $h^{\langle \widetilde{\sigma} \rangle}$:
\begin{equation} \label{overline}
\overline{\alpha}_{\sigma, \tau} = {\rm{res}}^{\ell/h}_{\ell^{\langle \widetilde{\tau} \rangle} / h^{\langle \widetilde{\sigma} \rangle}} \circ {\rm{res}}^{L/H}_{\ell/h} \circ \alpha : G \rightarrow {\rm{Gal}}(\ell^{\langle \widetilde{\tau} \rangle} / h^{\langle \widetilde{\sigma} \rangle}).
\end{equation}
The next lemma, which is \cite[lemme 4.2]{BDL20f}, shows why $\overline{\alpha}_{\sigma, \tau}$ is of interest:

\begin{lemma} \label{lemma:geo}
Let $H$ be a division ring of finite dimension over its center $h$ and $\sigma$ an automorphism of $H$ of finite order $m$. Denote the restriction of $\sigma$ to $h$ by $\widetilde{\sigma}$. Let $\alpha : G \rightarrow {\rm{Gal}}(L/H)$ be a finite embedding problem over $H$ and $\tau$ an automorphism of $L$ of order $m$ extending $\sigma$. Denote the restriction of $\tau$ to the center $\ell$ of $L$ by $\widetilde{\tau}$. Assume $\langle \widetilde{\tau}, {\rm{Gal}}(\ell/h) \rangle \cong \langle \widetilde{\tau} \rangle \times {\rm{Gal}}(\ell/h).$ Then $\alpha$ has a $(\sigma, \tau)$-geometric solution, if $\overline{\alpha}_{\sigma, \tau}$ has a geometric solution ${\rm{Gal}}(E/h^{\langle \widetilde{\sigma} \rangle}(T^m)) \rightarrow G$ such that $E \subseteq \ell^{\langle \widetilde{\tau} \rangle} ((T^m))$.
\end{lemma}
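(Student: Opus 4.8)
The plan is to produce the $(\sigma,\tau)$-geometric solution to $\alpha$ by extending scalars from the commutative geometric solution $E$ of $\overline{\alpha}_{\sigma,\tau}$ up to the division ring $L(T,\tau)$, the local hypothesis $E \subseteq \ell^{\langle\widetilde{\tau}\rangle}((T^m))$ being precisely what keeps the resulting algebra a division ring. Write $k_0 = h^{\langle\widetilde{\sigma}\rangle}$ and $\ell_0 = \ell^{\langle\widetilde{\tau}\rangle}$. First I would record, from \cite{DL20, BDL20f}, Theorem \ref{thm:DL} and Proposition \ref{prop:restriction}(2): that $H(T,\sigma)$, $L(T,\tau)$, and the division ring of twisted Laurent series $L((T,\tau)) = \{\sum_{i\geq N} a_iT^i : a_i \in L,\ Ta = \tau(a)T\}$ are finite-dimensional over their respective centers $k_0(T^m)$, $\ell_0(T^m)$, $\ell_0((T^m))$; that $L((T,\tau))$ contains both $L(T,\tau)$ and $\ell_0((T^m))$; and that restriction to centers gives a compatible chain of isomorphisms ${\rm{Gal}}(L(T,\tau)/H(T,\sigma)) \cong {\rm{Gal}}(\ell_0(T^m)/k_0(T^m)) \cong {\rm{Gal}}(\ell_0/k_0) \cong {\rm{Gal}}(L/H)$ which, by the definition \eqref{overline}, carries $\overline{\alpha}_{\sigma,\tau}$ to $\alpha$. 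Since $E/k_0(T^m)$ is a geometric solution of $\overline{\alpha}_{\sigma,\tau}$, we then have $\ell_0(T^m) \subseteq E$, a fixed isomorphism $\beta' : {\rm{Gal}}(E/k_0(T^m)) \to G$ under which ${\rm{Gal}}(E/\ell_0(T^m))$ corresponds to the normal subgroup ${\rm{ker}}(\alpha)$, and $E \subseteq \ell_0((T^m))$.

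Next I would let $F$ be the subring of $L((T,\tau))$ generated by the image of $E$ and by $L(T,\tau)$. Inside $L((T,\tau))$ one has $E \cap L(T,\tau) = \ell_0(T^m)$: indeed $E \cap L(T,\tau) \subseteq \ell_0((T^m)) \cap L(T,\tau)$, and an element of $L(T,\tau)$ lying in the center $\ell_0((T^m))$ of $L((T,\tau))$ is central in $L(T,\tau)$, hence in $\ell_0(T^m)$. Thus $E$ and $L(T,\tau)$ are linearly disjoint over $\ell_0(T^m)$ and the natural map gives $F \cong E \otimes_{\ell_0(T^m)} L(T,\tau)$; in particular $F$ is a finite-dimensional domain over the field $E$, hence a division ring, with center $E$, with $F$ finite-dimensional over its central subfield $k_0(T^m)$ (of dimension $|G|\cdot\dim_{\ell_0(T^m)}L(T,\tau)$), and with $E/\ell_0(T^m)$ Galois of group ${\rm{Gal}}(E/\ell_0(T^m))$. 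I would then embed $G$ into ${\rm{Aut}}(F/H(T,\sigma))$: given $g \in {\rm{Gal}}(E/k_0(T^m))$, let $\widehat{g} \in {\rm{Gal}}(L(T,\tau)/H(T,\sigma))$ be the unique element with $\widehat{g}|_{\ell_0(T^m)} = g|_{\ell_0(T^m)}$ (using the center isomorphism from Theorem \ref{thm:DL}); since $g$ and $\widehat{g}$ agree on $\ell_0(T^m)$, the pair $(g,\widehat{g})$ induces an automorphism $\widetilde{g}$ of $F \cong E \otimes_{\ell_0(T^m)} L(T,\tau)$, fixing $H(T,\sigma)$ pointwise, and $g \mapsto \widetilde{g}$ is an injective homomorphism because $\widetilde{g}|_E = g$.

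It then remains to compute the fixed subring and check the compatibility with $\alpha_{\sigma,\tau}$. For $g \in {\rm{ker}}(\alpha)$, i.e. $g \in {\rm{Gal}}(E/\ell_0(T^m))$, one has $\widehat{g} = {\rm{id}}$, so $\widetilde{g} = g \otimes {\rm{id}}$ fixes $L(T,\tau)$ pointwise; hence Galois descent for the Galois field extension $E/\ell_0(T^m)$ yields $F^{{\rm{ker}}(\alpha)} = (E \otimes_{\ell_0(T^m)} L(T,\tau))^{{\rm{Gal}}(E/\ell_0(T^m))\otimes{\rm{id}}} = L(T,\tau)$. The induced action of $G/{\rm{ker}}(\alpha)$ on $L(T,\tau)$ is, through $g \mapsto \widehat{g}$, the full Galois action of ${\rm{Gal}}(L(T,\tau)/H(T,\sigma))$ (surjectivity coming from surjectivity of $\alpha$ and the chain of isomorphisms), whence $F^G = L(T,\tau)^{{\rm{Gal}}(L(T,\tau)/H(T,\sigma))} = H(T,\sigma)$. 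As $F$ is finite-dimensional over $H(T,\sigma)$, the Galois theory of division rings (\cite{Jac64, Coh95}) then shows $F/H(T,\sigma)$ is Galois with finite Galois group, and since the center of $F$ is $E$, Theorem \ref{thm:DL} gives ${\rm{Gal}}(F/H(T,\sigma)) \cong {\rm{Gal}}(E/k_0(T^m)) \cong G$, so the injection $g \mapsto \widetilde{g}$ is an isomorphism $\beta^{-1} : G \to {\rm{Gal}}(F/H(T,\sigma))$. Finally, chasing $g$ through the restriction isomorphisms of the first step shows $\widetilde{g}|_L = \widehat{g}|_L = \alpha(\beta'(g))$, which rearranges to $\alpha_{\sigma,\tau} \circ \beta = {\rm{res}}^{F/H(T,\sigma)}_{L(T,\tau)/H(T,\sigma)}$; that is, ${\rm{Gal}}(F/H(T,\sigma)) \to G$ is a $(\sigma,\tau)$-geometric solution to $\alpha$.

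The step I expect to be the main obstacle is the interface between the commutative and non-commutative settings in the first part: setting up $L((T,\tau))$ correctly, identifying its center as $\ell_0((T^m))$ and its containment of both $\ell_0((T^m))$ and $L(T,\tau)$ — this is exactly where the hypothesis $E \subseteq \ell_0((T^m))$ is consumed, in order to realize $F$ inside an honest division ring — together with verifying that the four restriction maps among ${\rm{Gal}}(L/H)$, ${\rm{Gal}}(\ell_0/k_0)$, ${\rm{Gal}}(\ell_0(T^m)/k_0(T^m))$ and ${\rm{Gal}}(L(T,\tau)/H(T,\sigma))$ are mutually compatible and compatible with \eqref{overline}, so that a solution of $\overline{\alpha}_{\sigma,\tau}$ genuinely transports to one of $\alpha_{\sigma,\tau}$. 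By contrast, the Galois-descent identity $F^{{\rm{ker}}(\alpha)} = L(T,\tau)$ and the appeals to the Galois correspondence for division rings are routine once $F$ has been built.
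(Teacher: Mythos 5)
The paper does not actually prove this lemma: it quotes it as \cite[lemme 4.2]{BDL20f}, so there is no internal proof to compare yours against. Your architecture --- realizing the compositum $F$ of $E$ and $L(T,\tau)$ inside the twisted Laurent series division ring $L((T,\tau))$, transporting the $G$-action through the restriction isomorphisms, and computing fixed rings --- is the natural route and, in spirit, the expected one.

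There is, however, a genuine gap precisely at the interface you single out. You repeatedly assert exact identifications of centers: $Z(H(T,\sigma))=h^{\langle\widetilde{\sigma}\rangle}(T^m)$, $Z(L(T,\tau))=\ell^{\langle\widetilde{\tau}\rangle}(T^m)$, $Z(L((T,\tau)))=\ell^{\langle\widetilde{\tau}\rangle}((T^m))$, and $Z(F)=E$. These fields are central \emph{subfields}, but in general proper ones: if some power $\sigma^j$ with $0<j<m$ is inner --- e.g.\ $H$ the quaternions and $\sigma$ conjugation by $\mathbf{i}$, which has order $2$ and is allowed by the hypotheses --- then $Z(H(T,\sigma))$ is of the form $h^{\langle\widetilde{\sigma}\rangle}(uT^{r})$ with $r<m$ and strictly contains $h^{\langle\widetilde{\sigma}\rangle}(T^m)$, and similarly for $L(T,\tau)$, $L((T,\tau))$ and $F$ (where $Z(F)=E\cdot Z(L(T,\tau))$). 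Some uses of these identifications are repairable: $E\cap L(T,\tau)=\ell^{\langle\widetilde{\tau}\rangle}(T^m)$ and the linear disjointness follow from the decomposition of $L((T,\tau))$ as a free module over the \emph{central subfield} $\ell^{\langle\widetilde{\tau}\rangle}((T^m))$ on the elements $b_iT^j$ ($b_i$ a basis of $L$ over $\ell^{\langle\widetilde{\tau}\rangle}$, $0\le j<m$), and the isomorphism ${\rm{Gal}}(L(T,\tau)/H(T,\sigma))\cong{\rm{Gal}}(\ell^{\langle\widetilde{\tau}\rangle}(T^m)/h^{\langle\widetilde{\sigma}\rangle}(T^m))$ should be obtained from the restriction isomorphisms quoted in the paper (Proposition \ref{prop:restriction}(2) and the facts around \eqref{overline}), not from Theorem \ref{thm:DL} applied with the alleged centers.

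The step that is not repairable as written is the last one: you deduce that $g\mapsto\widetilde{g}$ is \emph{onto} ${\rm{Gal}}(F/H(T,\sigma))$ by applying Theorem \ref{thm:DL} ``since the center of $F$ is $E$'' and the center of $H(T,\sigma)$ is $h^{\langle\widetilde{\sigma}\rangle}(T^m)$, concluding $|{\rm{Gal}}(F/H(T,\sigma))|=|G|$. With the true centers this count is not established, and it is exactly here that one must exclude extra automorphisms of $F$ fixing $H(T,\sigma)$ pointwise (in particular inner automorphisms coming from the centralizer of $H(T,\sigma)$ in $F$); note also that without knowing ${\rm{Gal}}(F/H(T,\sigma))$ equals your copy of $G$, the restriction map ${\rm{res}}^{F/H(T,\sigma)}_{L(T,\tau)/H(T,\sigma)}$ used in the very definition of a solution to $\alpha_{\sigma,\tau}$ is not known to be well defined. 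To close the argument you would need either to compute $Z(H(T,\sigma))$ and $Z(F)$ exactly (via the inner order of $\sigma$, \`a la Jacobson) and redo the degree bookkeeping, or to prove directly that your $G$-action is outer and that $C_F(H(T,\sigma))=Z(F)$. As it stands, your proof produces a monomorphism $G\rightarrow{\rm{Gal}}(F/H(T,\sigma))$ with fixed ring $H(T,\sigma)$, but not the required isomorphism.
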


In the trivial case $L=H$ (and so $\tau=\sigma$), Lemma \ref{lemma:geo} reduces to this statement: if $H$ is of finite dimension over its center $h$, then $G$ is a Galois group over $H(T, \sigma)$ if there is a Galois field extension $E/h^{\langle \widetilde{\sigma} \rangle}(T^m)$ of group $G$ with $E \subseteq h^{\langle \widetilde{\sigma} \rangle} ((T^m))$. The latter statement actually holds without the assumption that $H$ is of finite dimension over $h$, as shown in \cite[\S2.1]{Beh21}:

\begin{lemma} \label{lemma:geo_2}
Let $H$ be a division ring of center $h$, let $\sigma$ be an automorphism of $H$ of finite order $m$, and let $\widetilde{\sigma}$ be the restriction of $\sigma$ to $h$. Given a finite group $G$, there is a Galois extension of $H(T, \sigma)$ of group $G$, if there is a Galois field extension $E/h^{\langle \widetilde{\sigma} \rangle}(T^m)$ of group $G$ with $E \subseteq h^{\langle \widetilde{\sigma} \rangle} ((T^m))$.
\end{lemma}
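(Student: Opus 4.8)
The plan is to build, starting from the given Galois field extension $E/h^{\langle \widetilde{\sigma} \rangle}(T^m)$ with $E \subseteq h^{\langle \widetilde{\sigma} \rangle}((T^m))$, a Galois extension of $H(T, \sigma)$ with the same group $G$ by a suitable ``base change'' argument. Write $h_0 = h^{\langle \widetilde{\sigma} \rangle}$ and $S = T^m$, so that $E/h_0(S)$ is Galois of group $G$ and $E$ embeds into the power series field $h_0((S))$. The key structural facts I would use are that $h_0(S)$ sits inside the center of $H(T, \sigma)$ — indeed $S = T^m$ is central in $H[T, \sigma]$ since $\sigma$ has order $m$, and $h_0$ is the fixed field of $\widetilde{\sigma}$ in the center $h$ of $H$, hence is contained in the center of $H(T,\sigma)$ — and that $h_0((S))$ is a Henselian (complete discretely valued) field. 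First I would form the compositum $L := E \cdot H(T, \sigma)$ inside a common overring; the point is that, because $E/h_0(S)$ is a finite separable field extension generated by a primitive element $\theta$ with minimal polynomial $P(S, Y) \in h_0(S)[Y]$, one can adjoin a root of $P$ to $H(T, \sigma)$ and obtain a division ring $L = H(T,\sigma)[\theta]$ that is a free $H(T,\sigma)$-module of rank $[E : h_0(S)] = |G|$.

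The heart of the argument is to check that $L/H(T, \sigma)$ is Galois in Artin's sense with group exactly $G$. Here is where I would exploit the hypothesis $E \subseteq h_0((S))$: each $g \in G = {\rm{Gal}}(E/h_0(S))$ sends $\theta$ to another root of $P$ lying in $h_0((S))$, and since $h_0$ (and hence $h_0((S))$) is central in $H(T,\sigma)$, the assignment $\theta \mapsto g(\theta)$, extended $H(T,\sigma)$-semilinearly (in fact $H(T,\sigma)$-linearly, as the coefficients being moved lie in the center), defines a ring automorphism $\widehat{g}$ of $L$ fixing $H(T, \sigma)$ pointwise. This gives an injection $G \hookrightarrow {\rm{Aut}}(L/H(T,\sigma))$. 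Conversely, any automorphism of $L$ over $H(T, \sigma)$ must permute the roots of $P$ and is determined by its effect on $\theta$, so $|{\rm{Aut}}(L/H(T,\sigma))| \le |G|$; hence ${\rm{Aut}}(L/H(T,\sigma)) \cong G$ and $L^{{\rm{Aut}}(L/H(T,\sigma))} = H(T,\sigma)$ because $L^G \cap h_0((S)) = h_0(S)$ and the fixed subring is spanned over $H(T,\sigma)$ by the $G$-invariant combinations of the central basis, forcing it down to $H(T,\sigma)$. This establishes that $L/H(T,\sigma)$ is Galois with Galois group $G$, which is exactly the claim.

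The main obstacle I anticipate is verifying cleanly that $L = H(T,\sigma)[\theta]$ is actually a division ring (and not merely a ring) and that $[L : H(T,\sigma)] = |G|$ rather than something smaller — in other words, that $P(S,Y)$ remains irreducible, or at least that its splitting behaviour over $H(T,\sigma)$ is controlled. This is where the Henselian/complete nature of $h_0((S))$ is decisive: $P$ factors over $h_0((S))$ exactly as it does over $E \subseteq h_0((S))$, and since $H(T,\sigma)$ embeds compatibly into a division ring built over $h((S))$ (the analogue of the ``twisted Laurent series'' picture, cf.\ the construction of $H(T,\sigma)$ as the fraction ring of the Ore domain $H[T,\sigma]$), one can pin down the degree. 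I would handle this by passing to the center: the center of $H(T,\sigma)$ is $h_0(S)$ (using $\sigma$ of order $m$), and by Theorem \ref{thm:DL} the Galois theory of $L/H(T,\sigma)$ matches that of the center extension $Z(L)/h_0(S)$; since $Z(L) = E$ by construction, the degree and Galois group are forced to be $|G|$ and $G$. This reduction to the commutative center, already the mechanism behind Lemma \ref{lemma:geo}, is what makes the argument go through without any hypothesis on $H$ being finite-dimensional over $h$, provided one sets up the compositum $E \cdot H(T,\sigma)$ directly rather than via the restriction maps of \S\ref{sssec:tiganoco_1.2}.
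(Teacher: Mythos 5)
Your overall strategy (embed $E$ centrally into an overring of $H(T,\sigma)$, form the compositum, extend the $G$-action, compute the fixed ring) is the right one — it is essentially the route of Behajaina, to whom the paper simply defers: the paper gives no proof of Lemma \ref{lemma:geo_2} but cites \cite[\S 2.1]{Beh21}. However, your proposal has a genuine gap at precisely the step you single out as the main obstacle, namely that $[L:H(T,\sigma)]=|G|$ (equivalently, that $P$ stays irreducible, i.e.\ that $1,\theta,\dots,\theta^{|G|-1}$ remain linearly independent over $H(T,\sigma)$). The two facts you invoke to settle this are respectively false and inapplicable. First, the center of $H(T,\sigma)$ need \emph{not} be $h^{\langle \widetilde{\sigma} \rangle}(T^m)$; one only has the inclusion $h^{\langle \widetilde{\sigma} \rangle}(T^m) \subseteq Z(H(T,\sigma))$. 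For instance, take $H$ the rational quaternions and $\sigma$ the conjugation by $\mathbf{i}$, of order $m=2$ with $\widetilde{\sigma}={\rm{id}}$: the element $\mathbf{i}T$ is central (it commutes with $T$ and, for $a \in H$, $(\mathbf{i}T)a = \mathbf{i}\,\mathbf{i}^{-1}a\mathbf{i}\,T = a(\mathbf{i}T)$), so $Z(H(T,\sigma)) \supseteq \Qq(\mathbf{i}T) \supsetneq \Qq(T^2)$. Second, Theorem \ref{thm:DL} assumes the base division ring is of finite dimension over its center; applied to $L/H(T,\sigma)$ this would require a finite-dimensionality hypothesis on $H(T,\sigma)$, hence on $H$ — exactly the hypothesis Lemma \ref{lemma:geo_2} is meant to dispense with (that case is already handled by Lemma \ref{lemma:geo}). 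Moreover your claim $Z(L)=E$ is asserted, not proved, and fails in general for the same kind of reason as the counterexample above. Without the degree statement, the identification ${\rm{Aut}}(L/H(T,\sigma)) \cong G$ and the fixed-ring computation both collapse.

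What a complete argument needs here is to work inside the twisted Laurent series division ring $H((T,\sigma))$ (which contains $H(T,\sigma)$ and whose center contains $h^{\langle \widetilde{\sigma} \rangle}((T^m))$, hence $E$), and then to prove the linear disjointness directly: the natural map $E \otimes_{h^{\langle \widetilde{\sigma} \rangle}(T^m)} H(T,\sigma) \to H((T,\sigma))$ is injective. This is the substance of Behajaina's proof and cannot be replaced by the ``pass to the center'' shortcut. Two smaller points: ``adjoining a root of $P$ to $H(T,\sigma)$'' only makes sense once such a common overring is fixed, so the Laurent series ring should be introduced at the start rather than gestured at; and your bound $|{\rm{Aut}}(L/H(T,\sigma))| \leq |G|$ via ``an automorphism permutes the roots of $P$'' needs the extra remark that a polynomial over a noncommutative division ring may have infinitely many roots — here one should observe that automorphisms preserve $Z(L)$ and $\theta$ is central, so the image of $\theta$ is one of the at most $|G|$ roots of $P$ in the field $Z(L)$. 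The latter is easily repaired; the degree/linear disjointness step is the real gap.
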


\subsection{Proof of Theorem \ref{thm:intro_4}} \label{ssec:tiganoco_2}

Denote the order of $\sigma$ by $m$ and the restriction of $\sigma$ to the center $h$ of $H$ by $\widetilde{\sigma}$. Let $\mathcal{C}$ be the class of finite groups $G$ which occur as the Galois group of a Galois field extension $E/h^{\langle \widetilde{\sigma} \rangle}(T^m)$ with $E \subseteq h^{\langle \widetilde{\sigma} \rangle}((T^m))$. Then $\mathcal{C}$ is non-empty  and it is clear that, if $G \in \mathcal{C}$ and $H \trianglelefteq G$, then $G/H \in \mathcal{C}$. Moreover, let $G \in \mathcal{C}$ and consider any semidirect product $A \rtimes G$ with $A$ finite abelian. As $G \in \mathcal{C}$, there is a Galois field extension $F/h^{\langle \widetilde{\sigma} \rangle}(T^m)$ of group $G$ with $F \subseteq h^{\langle \widetilde{\sigma} \rangle}((T^m))$. Theorem \ref{thm:intro_1} then yields a Galois field extension $E/h^{\langle \widetilde{\sigma} \rangle}(T^m)$ of group $A \rtimes G$ with $E \subseteq F \cdot h^{\langle \widetilde{\sigma} \rangle}((T^m))$. Since $F \subseteq h^{\langle \widetilde{\sigma} \rangle}((T^m))$, we get $E \subseteq h^{\langle \widetilde{\sigma} \rangle}((T^m))$, which implies that $A \rtimes G$ is in $\mathcal{C}$. Consequently, every finite semiabelian group is in $\mathcal{C}$. As every element of $\mathcal{C}$ is a Galois group over $H(T, \sigma)$ by Lemma \ref{lemma:geo_2}, we are done.

\subsection{Finite embedding problems with abelian kernels over division rings} \label{ssec:tiganoco_3}

Let $H$ be a division ring of finite dimension over its center $h$. As shown in \cite[corollaire 5.4]{BDL20f}, any given finite embedding problem $\alpha$ over $H$ with a weak solution has a geometric solution, if $h$ is ample. Taking $\sigma ={\rm{id}}_H$ (and so $(\tau, \tau') = ({\rm{id}}_L, {\rm{id}}_{L'})$) in the next theorem, which relies on Theorem \ref{thm:intro_1} and the material from \S\ref{ssec:tiganoco_1}, yields that the same holds if we replace the assumption ``$h$ is ample" by the one ``${\rm{ker}}(\alpha)$ is abelian", as alluded to at the end of \S\ref{sssec:intro_2.2}.

\begin{theorem} \label{thm:main_2} 
Let $H$ be a division ring of finite dimension over its center $h$ and $\sigma$ an automorphism of $H$ of finite order $m$. Let $\alpha : G \rightarrow {\rm{Gal}}(L/H)$ be a finite embedding pro\-blem over $H$, and let $\tau$ be an automorphism of $L$ of order $m$ extending $\sigma$ and such that $\langle{\widetilde{\tau}} ,{\rm{Gal}}(\ell/h) \cong \langle \widetilde{\tau} \rangle \times {\rm{Gal}}(\ell/h),$ where $\widetilde{\tau}$ denotes the restriction of $\tau$ to the center $\ell$ of $L$. Assume the following two conditions hold:

\vspace{0.5mm}

\noindent
{\rm{(1)}} ${\rm{ker}}(\alpha)$ is abelian,

\vspace{0.5mm}

\noindent
{\rm{(2)}} there exist a weak solution ${\rm{Gal}}(L'/H) \rightarrow G$ to $\alpha$, and an automorphism $\tau'$ of $L'$ of order $m$ extending $\tau$ and such that, if $\widetilde{\tau'}$ is the restriction of $\tau'$ to the center $\ell'$ of $L'$, then
\begin{equation} \label{eq:bla}
\langle \widetilde{\tau'}, {\rm{Gal}}(\ell'/h) \rangle \cong \langle \widetilde{\tau'} \rangle \times {\rm{Gal}}(\ell'/h).
\end{equation}

\noindent
Then $\alpha$ has a $(\sigma, \tau)$-geometric solution.
\end{theorem}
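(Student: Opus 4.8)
The plan is to dispose of the non-splitness of $\alpha$ \emph{before} introducing any indeterminate, by a weak-to-split reduction carried out over the division ring $H$ itself, and only then to feed the resulting split problem into Lemma \ref{lemma:geo} and Theorem \ref{thm:intro_1}. The reason for this ordering: if one applied Lemma \ref{lemma:geo} directly to $\alpha$, one would need a geometric solution $E$ with $E\subseteq \ell^{\langle \widetilde{\tau}\rangle}((T^m))$; since $\ell^{\langle \widetilde{\tau}\rangle}\otimes_{h^{\langle \widetilde{\sigma}\rangle}}h^{\langle \widetilde{\sigma}\rangle}((T^m))=\ell^{\langle \widetilde{\tau}\rangle}((T^m))$, such an inclusion forces the $T^m$-adic place of $h^{\langle \widetilde{\sigma}\rangle}(T^m)$ to be totally split in $E/\ell^{\langle \widetilde{\tau}\rangle}(T^m)$, hence its decomposition group to be a complement of $\mathrm{ker}(\overline{\alpha}_{\sigma,\tau})$, i.e.\ forces $\overline{\alpha}_{\sigma,\tau}$ to split. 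So the weak solution supplied by (2) must be turned into a genuine split datum while one still has a division ring to work with.

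\emph{Step 1: weak-to-split over $H$.} Let $\iota:{\rm{Gal}}(L'/H)\hookrightarrow G$ be the weak solution from (2), $G_1=\iota({\rm{Gal}}(L'/H))$. Since $\mathrm{ker}(\alpha)$ is abelian and $\alpha\circ\iota={\rm{res}}^{L'/H}_{L/H}$ is onto (so $G=\mathrm{ker}(\alpha)\cdot G_1$), form the split embedding problem $\alpha^\sharp:\mathrm{ker}(\alpha)\rtimes{\rm{Gal}}(L'/H)\to {\rm{Gal}}(L'/H)$ (the action being conjugation through $\iota$) together with the epimorphism $\nu:\mathrm{ker}(\alpha)\rtimes{\rm{Gal}}(L'/H)\to G$, $(a,g)\mapsto a\,\iota(g)$, whose kernel $\{(\iota(g)^{-1},g):g\in{\rm{Gal}}(L'/L)\}$ is normal and satisfies $\alpha\circ\nu={\rm{res}}^{L'/H}_{L/H}\circ\alpha^\sharp$. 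The automorphism $\tau'$ from (2) extends $\sigma$, has order $m$, and fulfills \eqref{eq:bla}, so it plays for $\alpha^\sharp$ the role that $\tau$ plays for $\alpha$, and $\alpha^\sharp$ again has abelian kernel. Using Artin's Galois correspondence for division rings together with Proposition \ref{prop:restriction} applied to the tower $H(T,\sigma)\subseteq L(T,\tau)\subseteq L'(T,\tau')$, one checks that any $(\sigma,\tau')$-geometric solution ${\rm{Gal}}(\hat E/H(T,\sigma))\to\mathrm{ker}(\alpha)\rtimes{\rm{Gal}}(L'/H)$ to $\alpha^\sharp$ yields, upon passing to the fixed division ring of the preimage of $\mathrm{ker}(\nu)$, a $(\sigma,\tau)$-geometric solution to $\alpha$. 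Thus it suffices to solve $\alpha^\sharp$ $(\sigma,\tau')$-geometrically.

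\emph{Step 2: the split case via Lemma \ref{lemma:geo} and Theorem \ref{thm:intro_1}.} By Lemma \ref{lemma:geo} applied to $\alpha^\sharp$ (with $L'$, $\tau'$ in the roles of $L$, $\tau$), it is enough to produce a geometric solution ${\rm{Gal}}(E/h^{\langle \widetilde{\sigma}\rangle}(T^m))\to\mathrm{ker}(\alpha)\rtimes{\rm{Gal}}(L'/H)$ to $\overline{\alpha^\sharp}_{\sigma,\tau'}$ with $E\subseteq {\ell'}^{\langle \widetilde{\tau'}\rangle}((T^m))$. By the isomorphisms ${\rm{res}}^{L'/H}_{\ell'/h}$ and ${\rm{res}}^{\ell'/h}_{{\ell'}^{\langle \widetilde{\tau'}\rangle}/h^{\langle \widetilde{\sigma}\rangle}}$ recalled in \S\ref{ssec:tiganoco_1}, $\overline{\alpha^\sharp}_{\sigma,\tau'}$ is again a finite split embedding problem with abelian kernel, now over the field $k:=h^{\langle \widetilde{\sigma}\rangle}$ and with base extension ${\ell'}^{\langle \widetilde{\tau'}\rangle}/k$. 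Put $S=T^m$; then $k(S)$ is Hilbertian, so $(\overline{\alpha^\sharp}_{\sigma,\tau'})_{k(S)}$ is a finite split embedding problem over $k(S)$ with abelian kernel, and Theorem \ref{thm:intro_1}, applied with $\mathcal{S}$ the singleton $\{v\}$ where $v$ is the $S$-adic Krull valuation of $k(S)$, provides a solution ${\rm{Gal}}(E/k(S))\to\mathrm{ker}(\alpha)\rtimes{\rm{Gal}}(L'/H)$ with $E\subseteq {\ell'}^{\langle \widetilde{\tau'}\rangle}(S)\cdot k(S)_v^h$. Since $k(S)_v^h\subseteq k((S))$ and ${\ell'}^{\langle \widetilde{\tau'}\rangle}(S)\cdot k((S))={\ell'}^{\langle \widetilde{\tau'}\rangle}((S))$, we get $E\subseteq {\ell'}^{\langle \widetilde{\tau'}\rangle}((T^m))$, as needed. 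Lemma \ref{lemma:geo} then gives a $(\sigma,\tau')$-geometric solution to $\alpha^\sharp$, which Step 1 converts into a $(\sigma,\tau)$-geometric solution to $\alpha$.

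\emph{Main obstacle.} The delicate step is Step 1: transporting the classical weak-to-split reduction into the twisted, non-commutative setting, i.e.\ checking that the fixed division ring of the preimage of $\mathrm{ker}(\nu)$ inside $\hat E$ is indeed Galois over $H(T,\sigma)$ of group $G$, lies over $L(T,\tau)$, and is compatible with $\alpha_{\sigma,\tau}$ — which demands that Artin's Galois theory of division rings interact correctly with the twisted polynomial rings $L[T,\tau]\subseteq L'[T,\tau']$ and the nested automorphisms $\sigma,\tau,\tau'$. By contrast, the field-theoretic content of Step 2 (Hilbertianity of $k(S)$, the inclusion $k(S)_v^h\subseteq k((S))$, and the appeal to Theorem \ref{thm:intro_1}) is routine.
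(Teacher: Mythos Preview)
Your proof is correct and follows essentially the same route as the paper's: first perform the weak$\to$split reduction over $H$ (the paper cites \cite[proposition 5.3]{BDL20f} for this, while you spell out the construction of $\alpha^\sharp$ and $\nu$ explicitly), then apply Lemma~\ref{lemma:geo} to the resulting split problem over $L'$ with the automorphism $\tau'$, and finally invoke Theorem~\ref{thm:intro_1} over the Hilbertian field $h^{\langle\widetilde{\sigma}\rangle}(T^m)$ with the $T^m$-adic valuation to obtain the required inclusion $E\subseteq {\ell'}^{\langle\widetilde{\tau'}\rangle}((T^m))$. Your identification of Step~1 as the point needing care is accurate---this is precisely the content of the cited proposition from \cite{BDL20f}, and the paper simply treats it as a black box.
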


\begin{proof}
As (2) holds, we may apply the weak$\rightarrow$split reduction for finite embedding problems over division rings (see \cite[proposition 5.3]{BDL20f}) to get the existence of a finite split embedding problem $\alpha' : G' \rightarrow {\rm{Gal}}(L'/H)$ over $H$ such that ${\rm{ker}}(\alpha) \cong {\rm{ker}}(\alpha')$ and such that every $(\sigma, \tau')$-geometric solution to $\alpha'$ yields a $(\sigma, \tau)$-geometric solution to $\alpha$. By the former conclusion and (1), ${\rm{ker}}(\alpha')$ is abelian. Now, as \eqref{eq:bla} holds, we may apply Lemma \ref{lemma:geo}: $\alpha'$ has a $(\sigma, \tau')$-geometric solution, if the finite embedding problem ${\overline{\alpha'}}_{\sigma, \tau'} : G' \rightarrow {\rm{Gal}}(\ell'^{\langle {\widetilde{\tau'}} \rangle} / h^{\langle \widetilde{\sigma} \rangle})$ over $h^{\langle \widetilde{\sigma} \rangle}$ (see \eqref{overline}) has a geometric solution ${\rm{Gal}}(E/h^{\langle \widetilde{\sigma} \rangle}(T^m)) \rightarrow G'$ with $E \subseteq \ell'^{\langle \widetilde{\tau'} \rangle} ((T^m))$. Note that ${\overline{\alpha'}}_{\sigma, \tau'}$ splits (as this holds for $\alpha'$) and ${\rm{ker}}({\overline{\alpha'}}_{\sigma, \tau'})$ is abelian (as ${\rm{ker}}(\alpha')$ is). Hence, such a geometric solution to ${\overline{\alpha'}}_{\sigma, \tau'}$ exists by Theorem \ref{thm:intro_1}. This concludes the proof.
\end{proof}

\section{Concluding remarks}

Given a finite split embedding problem $\alpha : G \rightarrow {\rm{Gal}}(L/k)$ over a field $k$, producing a geometric solution $\beta : {\rm{Gal}}(E/k(T)) \rightarrow G$ with $E \subseteq L((T))$, as Theorem \ref{thm:intro_1} does if ${\rm{ker}}(\alpha)$ is abelian, has several interests, as we have already seen in \S\ref{sec:proof_2} and \S\ref{sec:tiganoco}. Here are two others.

\vspace{2mm}

\noindent
(1) Recall that the {\it{level}} of a field $k$ is equal to either $\infty$ if -1 cannot be written as a sum of finitely many squares in $k$ or the smallest positive integer $n$ such that there exist $x_1, \dots, x_n \in k$ with $-1 = x_1^2 + \cdots + x_n^2$ otherwise. By a well-known result of Pfister (see, e.g., \cite[Chapter XI, Theorem 2.2]{Lam05}), the level of a field is either $\infty$ or a power of 2.

\vspace{1mm}

\noindent
{\it{{\rm{($*$)}} Let $k$ be a number field of level at least 4 and $G$ a finite group which occurs as the Galois group of a Galois field extension $E/k(T)$ with $E \subseteq k((T))$. Then $G$ is the Galois group of a Galois extension of the division ring $H_k$ of quaternions with coefficients in $k$ (i.e., $H_k=k\oplus k\mathbf{i}\oplus k\mathbf{j}\oplus k\mathbf{k}$ with $\mathbf{i}^2= \mathbf{j}^2= \mathbf{k}^2 =\mathbf{i}\mathbf{j}\mathbf{k}=-1$).}}

\vspace{1mm}

\noindent
Indeed, as $k$ has level $\geq$ 4, the Hasse--Minkowski theorem (see, e.g., \cite[p. 170]{Lam05}) yields an equivalence class $v$ of absolute values on $k$ such that the completion $k_v$ of $k$ at $v$ has level $ \geq 4$. As $E_0/k=k/k$, we have $E_0 \subseteq k_v$. Then pick $t_0 \in k$ such that the specialization $E_{t_0}/k$ of $E/k(T)$ at $t_0$ is of group $G$ and such that $E_{t_0} \subseteq k_v$. Hence, $E_{t_0}$ has level $\geq 4$ and it remains to use \cite[th\'eor\`eme 7]{DL20} to conclude the proof of ($*$).

\vspace{2mm}

\noindent
(2) Let $h : \overline{\Qq} \rightarrow \Rr_{\geq 0}$ be the (absolute logarithmic) Weil height. Recall that a field $L \subseteq \overline{\Qq}$ has the {\it{Northcott property}} (Property (N)) if $\{x \in L \, \, : \, \, h(x) < T\}$ is finite for every $T >0$. By Northcott's theorem (see \cite[Theorem 1]{Nor49}), every number field has Property (N). But examples of infinite algebraic extensions of $\Qq$ with Property (N) are more sparse.

In \cite{CF21}, Checcoli and Fehm give many such examples by proving that, given a sequence $(G_n)_{n \geq 1}$ of finite solvable groups, there is an infinite Galois field extension $L$ of $\Qq$ such that ${\rm{Gal}}(L/\Qq) = \prod_{n=1}^\infty G_n$, the completion of $L/\Qq$ at $p$ is a finite extension of $\Qq_p$ for every prime number $p$, and $L$ has Property (N). From the proof of \cite[Theorem 1.3]{CF21} (see also \cite[Remark 2.5]{CF21}), the assumption that $G_n$ is solvable is used only to guarantee the existence of ``many" Galois extensions of $\Qq$ of group $G_n$ which are totally split at any finitely many given prime numbers. Hence, the result of Checcoli and Fehm remains true if each $G_n$ ($n \geq 1$) is either solvable or the Galois group of a Galois extension $E/\Qq(T)$ with $E \subseteq \Qq((T))$.

\bibliography{Biblio2}
\bibliographystyle{alpha}

\end{document}